\documentclass[12pt]{amsart}

\pagestyle{plain}
\usepackage{mathrsfs}
\usepackage{bbm}%
\usepackage{bbding}
\usepackage[mathcal]{euscript}
\usepackage{graphicx}
\usepackage{amsthm,amscd}
\usepackage{calc}
\usepackage{mathrsfs,dsfont}
\usepackage{CJK,fancyhdr,amscd}
\usepackage{anysize}
\usepackage{amsmath,amssymb,amsfonts}
\usepackage{epsfig,enumerate}
\usepackage{latexsym}
\usepackage{indentfirst, latexsym}
\usepackage{graphics}
\usepackage[all,poly,knot]{xy}
\usepackage[pagebackref]{hyperref}

\setlength{\hoffset}{0in} \setlength{\voffset}{0in}
\setlength{\oddsidemargin}{0in} \setlength{\evensidemargin}{0in}
\setlength{\marginparsep}{0in} \setlength{\topmargin}{0in}
\setlength{\headheight}{0in} \setlength{\headsep}{0.3in}
\setlength{\footskip}{0.375in} \setlength{\textwidth}{6.268in}
\setlength{\textheight}{9.693in}

\renewcommand{\baselinestretch}{1}
\numberwithin{equation}{section}
\def\Re{{\rm Re}}
\def\Im{{\rm Im}}
\def\p{\partial}
\def\o{\overline}
\def\b{\bar}
\def\mb{\mathbb}
\def\mc{\mathcal}
\def\n{\nabla}
\def\v{\varphi}
\def\w{\wedge}
\def\m{\omega}
\def\t{\triangle}
\def\wt{\widetilde}
\def\mf{\mathfrak}
\def\u{\underline}
\def\l{\lrcorner}
\theoremstyle{plain}
\newtheorem{thm}{Theorem}[section]
\newtheorem{lemma}[thm]{Lemma}
\newtheorem{prop}[thm]{Proposition}
\newtheorem{cor}[thm]{Corollary}
\theoremstyle{definition}
\newtheorem{rem}[thm]{Remark}

\newtheorem{ex}[thm]{Example}
\theoremstyle{definition}
\newtheorem{defn}[thm]{Definition}
\newcommand{\comment}[1]{}
\def\baselinestretch{1}
\usepackage{fancyhdr}
\pagestyle{fancy}
 \pagestyle{myheadings} 


\begin{document}

\title{Geometry of logarithmic forms and deformations of complex structures}
\makeatletter
\let\uppercasenonmath\@gobble
\let\MakeUppercase\relax
\let\scshape\relax
\makeatother

\author[Kefeng Liu]{Kefeng Liu}
\author[Sheng Rao]{Sheng Rao}
\author[Xueyuan Wan]{Xueyuan Wan}

\address{Kefeng Liu, Department of Mathematics, Capital Normal University, Beijing, 100048, China; Department of Mathematics, University of California at Los Angeles, California 90095, USA}

\email{liu@math.ucla.edu}

\address{Sheng Rao, School of Mathematics and Statistics, Wuhan  University,
Wuhan 430072, China.}
\email{likeanyone@whu.edu.cn}

\address{Xueyuan Wan, Mathematical Sciences, Chalmers University of Technology, 412 96 Gothenburg, Sweden.}
\email{xwan@chalmers.se}

\thanks{Liu is partially supported by NSF (Grant No. 1510216); Rao is partially supported by NSFC (Grant No. 11671305, 11771339);
 Wan is partially supported by China Scholarship Council/University of California, Los Angeles Joint PhD. Student}

\subjclass[2010]{Primary 58A14; Secondary 18G40, 58A10, 32G05, 14J32, 58A25}
\keywords{Hodge theory; Spectral sequences, hypercohomology, Differential forms, Deformations of complex structures, Calabi-Yau manifolds, Currents}

\begin{abstract}
We present a new method to solve certain $\bar\partial$-equations for logarithmic differential forms by using harmonic integral theory for currents on K\"ahler manifolds. The result can be considered as a $\partial\bar\partial$-lemma for logarithmic forms. As applications, we generalize the result of Deligne about closedness of logarithmic forms, give geometric and simpler proofs of Deligne's degeneracy theorem for the logarithmic Hodge to de Rham spectral sequences at $E_1$-level, as well as certain injectivity theorem on compact K\"ahler manifolds.

Furthermore, for a family of logarithmic deformations of complex structures on K\"ahler manifolds, we construct the extension for any logarithmic $(n,q)$-form on the central fiber and thus deduce the local stability of log Calabi-Yau structure by extending an iteration method to the logarithmic forms. Finally we prove the unobstructedness of the deformations of a log Calabi-Yau pair and a pair on a Calabi-Yau manifold by differential geometric method.
 \end{abstract}
\maketitle
\tableofcontents

\section*{Introduction} \label{s0}

The basic theory on sheaf of logarithmic differential forms and of sheaves with logarithmic integrable connections over smooth projective manifolds were developed by P. Deligne in \cite{Del70}. H. Esnault and E. Viehweg investigated in \cite{EV06, Viehweg} the relations between logarithmic de Rham complexes and vanishing theorems on complex algebraic manifolds, and showed that many vanishing theorems follow from Deligne's degeneracy of logarithmic Hodge to de Rham spectral sequences at $E_1$-level. In \cite{Wan}, C. Huang, X. Yang, the first and third authors developed an effective analytic method to prove vanishing theorems for sheaves of logarithmic differential forms on compact K\"ahler manifolds. In this paper, the authors will present an effective differential geometric approach to the geometry of logarithmic differential forms which is used to study degeneracy of spectral sequences \cite{Del71}, injectivity theorems \cite{Ambro,Fujinoinj} in algebraic geometry and logarithmic deformations of complex structures \cite{KKP08,Iacono}.

Throughout this paper, let $X$ be an $n$-dimensional compact K\"ahler manifold and $D$ a simple normal crossing divisor on $X$. For any logarithmic $(p,q)$-form $$\alpha\in A^{0,q}(X,\Omega^p_X(\log D)),$$ as described in Section \ref{section1}, with $\b{\p}\p\alpha=0$, we will present a method to solve the $\b{\p}$-equation
 \begin{align}\label{0.1}
 \b{\p}x=\p\alpha
 \end{align}
such that $x\in A^{0,q-1}(X,\Omega^{p+1}_X(\log D))$. For the case $D=\emptyset$, the equation (\ref{0.1}) is easily solved by the $\partial\bar{\partial}$-lemma  from standard Hodge theory on forms, as discussed in \cite[p. 84]{Griffith}. For the case $D\neq \emptyset$, the equation (\ref{0.1}) is defined on the open manifold $X-D$, and the naive possible approach is to use the $L^2$-Hodge theory with respect to some complete K\"ahler metric as in \cite{Wan}. However, a logarithmic form is not necessarily  $L^2$-integrable and so one needs some new methods to solve this equation (\ref{0.1}).  Inspired by the work \cite{Nog} of J. Noguchi, one may consider a logarithmic form as a current on $X$, where the harmonic integral theory \cite{de,Kodaira} by G. de Rham and K. Kodaira is available.

Roughly speaking, the current $T_{\p\alpha}$ associated to $\p\alpha$ can be decomposed into two terms, one term of residue and the other one in the image of $\b{\p}$ as shown in (\ref{2.7}).  An iteration trick shows that the residue term also lies in the image of $\b{\p}$  when acting on a smooth differential form vanishing on $D$ as shown in Lemma \ref{lemma2} and so does $T_{\p\alpha}$. Notice that the sheaf of logarithmic differential forms is locally free and thus the logarithmic form $\p\alpha$ can also be viewed as a bundle-valued smooth differential form.  By these and the bundle-valued Hodge decomposition theorem, we  prove our first main theorem which can be considered as a $\p\b{\p}$-lemma for logarithmic forms.
 \begin{thm}[=Theorem \ref{thm2}]\label{main theorem}
 Let $X$ be a compact K\"ahler manifold and $D$ a simple normal crossing divisor on $X$. For any $\alpha\in A^{0,q}(X,\Omega^p_X(\log D))$ with $\b{\p}\p\alpha=0$, there exists a solution $$x\in A^{0,q-1}(X,\Omega^{p+1}_X(\log D))$$ for the $\b{\p}$-equation $(\ref{0.1})$.
 \end{thm}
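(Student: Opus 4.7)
The plan is to pass to currents on $X$ to solve the equation in a weak sense, and then invoke bundle-valued harmonic theory on the locally free sheaf $\Omega^{p+1}_X(\log D)$ to upgrade the solution to a smooth logarithmic form. First I would associate to $\p\alpha$ the current $T_{\p\alpha}$ on $X$ by pairing with compactly supported smooth test forms; the logarithmic poles of $\alpha$ along the SNC divisor $D$ are mild enough to keep the integrand locally $L^1$, so $T_{\p\alpha}$ is well-defined. The identity $\b{\p}\p\alpha=0$ holds pointwise on $X-D$, but when promoted to a statement about $T_{\p\alpha}$ on the whole of $X$ it picks up a boundary contribution near $D$ computed by the Poincar\'e residue formula. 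The outcome is the decomposition $T_{\p\alpha}=R+\b{\p}S$ alluded to in the introduction, with $R$ a residue current supported on $D$.

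The second step is to show that $R$ itself is $\b{\p}$-exact when paired against smooth test forms vanishing on $D$. Since each irreducible component $D_i$ of $D$ is a compact K\"ahler manifold and the intersections $D_{i_1}\cap\cdots\cap D_{i_k}$ stratify $D$ by smooth compact K\"ahler submanifolds carrying induced SNC divisors, and since the Poincar\'e residue of a logarithmic form along $D_i$ is again logarithmic, one can iterate: at each stratum I apply the same residue decomposition to the piece of $R$ restricted there, peel off a new $\b{\p}$-exact contribution, and continue on the deeper residue, with the base case being the classical $\p\b{\p}$-lemma on a compact K\"ahler manifold with empty divisor. The iteration terminates after finitely many steps and yields $T_{\p\alpha}=\b{\p}T$ for some current $T$ on $X$.

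With the current-level equation $T_{\p\alpha}=\b{\p}T$ in hand, I would invoke bundle-valued Hodge theory. Since $\Omega^{p+1}_X(\log D)$ is locally free, $\p\alpha$ is a smooth section of a holomorphic vector bundle on the compact K\"ahler manifold $X$, so the Kodaira--Hodge decomposition on $A^{0,q}(X,\Omega^{p+1}_X(\log D))$ writes $\p\alpha=H(\p\alpha)+\b{\p}x+\b{\p}^*y$ with each summand smooth. The assumption $\b{\p}\p\alpha=0$ gives $\b{\p}\b{\p}^*y=0$, so $\b{\p}^*y$ is harmonic and hence vanishes, while $H(\p\alpha)=0$ is extracted from the current-level exactness by testing against smooth harmonic representatives and using the compatibility between the scalar current pairing and the bundle-valued pairing on $\Omega^{p+1}_X(\log D)$. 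The remaining identity $\p\alpha=\b{\p}x$ gives the claimed smooth logarithmic solution.

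The main obstacle I foresee is the iteration on strata. The residue of $\p\alpha$ across a component $D_i$ mixes derivatives of $\alpha$ with the local defining section of $D_i$, and arranging the bookkeeping so that after one step one genuinely obtains a $\b{\p}\p$-closed logarithmic form on a lower-dimensional compact K\"ahler manifold with its own induced SNC divisor, in a form clean enough for the induction to continue, is the delicate point. Careful tracking of how much test forms must vanish along $D$ at each round, without destroying either the K\"ahler structure or the local freeness at each stratum, is where the technical heart of the argument lives.
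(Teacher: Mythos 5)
Your proposal follows essentially the same route as the paper: associate to $\p\alpha$ the current $T_{\p\alpha}$, decompose it into a $\b{\p}$-exact piece plus a residue contribution, show by iterating Poincar\'e residues over the strata $D_{i_1}\cap\cdots\cap D_{i_l}$ (the iteration terminating because sufficiently deep intersections are empty, rather than via a $\p\b{\p}$-lemma base case) that the residue part is also $\b{\p}$-exact against the relevant test forms, and then upgrade to a smooth solution via the bundle-valued Hodge decomposition on the locally free sheaf $\Omega^{p+1}_X(\log D)$. The delicate bookkeeping you flag --- that the admissible test forms are precisely those of the shape $\mathcal{I}^*(\beta)$ coming from $(E^{p+1})^*$-valued forms, which vanish along $D$, and that the iterated residues anticommute so the argument propagates to deeper strata --- is exactly what the paper's Lemma \ref{lemma1} and Lemma \ref{lemma2} supply.
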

Similar to Theorem \ref{main theorem}, one is able to obtain the second main theorem:
\begin{thm}[=Theorem \ref{sol-CY}]\label{0sol-CY} With the same notations as in Theorem \ref{main theorem},
if $$\alpha\in A^{n,n-q}(X, T_X^p(-\log D))\subset A^{n-p,n-q}(X)$$ with $\b{\p}\p\alpha=0$, then there is a solution $x\in A^{n,n-q-1}(X, T_X^{p-1}(-\log D))$ such that
$$
\b{\p}x=\p\alpha.	
$$
\end{thm}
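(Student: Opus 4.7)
The plan is to deduce Theorem~\ref{0sol-CY} by adapting the proof of Theorem~\ref{main theorem} to the dual sheaf $T_X^p(-\log D)$. My starting observation is the natural sheaf isomorphism
$$
\Omega_X^n\otimes T_X^p(-\log D)\;\cong\;\Omega_X^{n-p}(\log D)\otimes \mc{O}_X(-D),
$$
arising from the perfect wedge-pairing $\Omega_X^p(\log D)\otimes\Omega_X^{n-p}(\log D)\to \Omega_X^n(\log D)$. Under this isomorphism, $\alpha\in A^{n,n-q}(X,T_X^p(-\log D))$ corresponds to a $(0,n-q)$-form $\tilde\alpha$ with values in the sub-bundle $\Omega_X^{n-p}(\log D)\otimes \mc{O}_X(-D)\subset \Omega_X^{n-p}(\log D)$, and the inclusion into $A^{n-p,n-q}(X)$ built into the hypothesis factors through this identification. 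The same identification with $p-1$ in place of $p$ converts the target space $A^{n,n-q-1}(X,T_X^{p-1}(-\log D))$ into the sections of $A^{0,n-q-1}(X,\Omega_X^{n-p+1}(\log D))$ that actually take values in $\Omega_X^{n-p+1}(\log D)\otimes \mc{O}_X(-D)$.

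With $\alpha$ thus translated into the logarithmic form $\tilde\alpha$, I would rerun the three ingredients of the proof of Theorem~\ref{thm2}. First, I associate the current $T_{\p\tilde\alpha}$ to $\p\tilde\alpha$ on $X$ and split it, in the spirit of the decomposition described in the introduction, into a residue piece concentrated along $D$ and a term lying in the image of $\b{\p}$. Next, I apply the iteration trick of Lemma~\ref{lemma2} to conclude that the residue piece is also $\b{\p}$-exact when tested against smooth forms vanishing along $D$. Finally, I invoke the bundle-valued Hodge decomposition of de~Rham--Kodaira for the holomorphic bundle $\Omega_X^n\otimes T_X^{p-1}(-\log D)\cong\Omega_X^{n-p+1}(\log D)\otimes \mc{O}_X(-D)$ to upgrade the current-level solution to a smooth section, which under the inverse of the above isomorphism yields the desired $x\in A^{n,n-q-1}(X,T_X^{p-1}(-\log D))$.

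The main obstacle I anticipate is the second step: one must carefully propagate the $\mc{O}_X(-D)$-twist through the residue/iteration argument, so that the produced solution actually lies in the sub-bundle $\Omega_X^{n-p+1}(\log D)\otimes \mc{O}_X(-D)$ rather than merely in $\Omega_X^{n-p+1}(\log D)$. In current-theoretic language, the harmonic representative of the Dolbeault class of $\p\tilde\alpha$ must inherit from $\tilde\alpha$ its vanishing of order one along $D$. I would handle this by pairing the residue terms with test forms drawn from the Serre-dual space, so that the polar behavior of the test forms along $D$ exactly compensates the $\mc{O}_X(-D)$ vanishing of $\tilde\alpha$, and then running the iteration of Lemma~\ref{lemma2} verbatim; the successive iterates then remain in the correct sub-bundle throughout. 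Once this is established, the final Hodge-theoretic step is routine since the bundle in question is a genuine holomorphic bundle on the compact K\"ahler manifold $X$.
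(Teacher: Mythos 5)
Your overall strategy coincides with the paper's: identify $A^{n,n-q}(X,T_X^p(-\log D))$ with twisted logarithmic forms (the paper works with the dual bundle $(E^{p-1})^*$ directly, which amounts to the same thing), run the bundle-valued Hodge decomposition there, and kill the harmonic obstruction by a residue argument fed into Lemma \ref{lemma2}. One intermediate step of your plan is misstated, though: the proposed decomposition of $T_{\p\tilde\alpha}$ into ``a residue piece concentrated along $D$ plus a term in the image of $\b{\p}$'', modelled on (\ref{2.7}), is vacuous here. After the $\mc{O}_X(-D)$-twist, $\tilde\alpha$ is a genuinely smooth form on $X$ (the factor $z^i$ cancels the pole of $dz^i/z^i$), so $\text{Res}(\tilde\alpha)=0$ and $T_{\p\tilde\alpha}=\p T_{\tilde\alpha}$ with no correction; the ordinary $\p\b{\p}$-lemma already makes $\p\alpha$ a $\b{\p}$-exact smooth form, and the entire content of the theorem is that a primitive can be found \emph{inside the sub-bundle}. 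The residues therefore cannot be extracted from $\tilde\alpha$ itself; they appear only when the harmonic test forms $e$ of the twisted bundle are converted back into genuinely polar logarithmic forms $\mathcal{I}^{-1}(*'e)$ and one integrates by parts against them --- which your final paragraph correctly anticipates and which is exactly what the paper does. To carry that step out you additionally need the auxiliary identity $\p\alpha=\b{\p}\p\beta$ with $\beta=-\b{\p}^*\mb{G}''\alpha$ (from the scalar Hodge decomposition and the K\"ahler identities), so that
\begin{equation*}
\int_X\p\alpha\wedge\mathcal{I}^{-1}(*'e)=\int_X\b{\p}\bigl(\p\beta\wedge\mathcal{I}^{-1}(*'e)\bigr)=\pm 2\pi\sqrt{-1}\,\p\b{\p}^*\mb{G}''\,\text{Res}(\mathcal{I}^{-1}(*'e))(\alpha),
\end{equation*}
after which Lemma \ref{lemma2} and the hypothesis $\b{\p}\p\alpha=0$ give the vanishing of the harmonic projection. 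With that ingredient supplied, together with the routine local verification that $\p\alpha$ already lies in $A^{n,n-q}(X,T_X^{p-1}(-\log D))$, your plan reproduces the paper's proof; the solution is then $x=\mathcal{I}^*\b{\p}_E^*\mb{G}''_E(\mathcal{I}^*)^{-1}(\p\alpha)$, which lands in the correct sub-bundle automatically because the Green and adjoint operators are taken in the twisted bundle.
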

Here one identifies an element of $A^{n,n-q}(X,T^p_X(-\log D))$ with an $(n-p,n-q)$-form by contraction of the $(n,n-q)$-form with the $T^p_X(-\log D)$-valued coefficient as given explicitly in  (\ref{2.3}).

Then we present three kinds of applications of Theorems $\ref{main theorem}$ and \ref{0sol-CY} to algebraic geometry.
As the first application of Theorem \ref{main theorem}, we generalize one result of Deligne on $d$-closedness of logarithmic forms on a smooth complex quasi-projective variety.

\begin{cor}[=Corollary \ref{g-No}]\label{closedness} With the same notations as in Theorem \ref{main theorem},
if $\alpha\in A^{0,0}(X,\Omega^p_X(\log D))$ with $\b{\p}\p\alpha=0$, then $\p\alpha=0$.	
\end{cor}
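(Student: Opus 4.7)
The corollary falls out of Theorem~\ref{main theorem} at the boundary $q = 0$. The plan is simply to feed $\alpha$ into that theorem with $q = 0$: the hypothesis $\bar{\partial}\partial\alpha = 0$ is the same, so Theorem~\ref{main theorem} yields an $x \in A^{0,-1}(X, \Omega^{p+1}_X(\log D))$ with $\bar{\partial}x = \partial\alpha$. Because $A^{0,-1}$ is the zero space, $x$ is necessarily $0$, and the equation collapses to $\partial\alpha = 0$.

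This makes the corollary essentially a tautological consequence of the main theorem: there is no new analytical content to produce, only the observation that at the edge of the bidegree range the $\bar{\partial}$-equation becomes vacuous and forces the source to vanish. The only point needing a sanity check is that the proof of Theorem~\ref{main theorem} --- which, as sketched in the introduction, decomposes the current $T_{\partial\alpha}$ into a residue part plus a $\bar{\partial}$-exact part and then kills the residue via the iteration of Lemma~\ref{lemma2} --- actually covers the boundary case $q = 0$. Inspecting the argument, the harmonic-integral decomposition and the iteration trick nowhere use $q \geq 1$, so this is not an obstruction.

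Comparing with Deligne's original result: his hypothesis is that $\alpha$ be a holomorphic logarithmic form, i.e.\ $\bar{\partial}\alpha = 0$. By the anticommutation $\bar{\partial}\partial = -\partial\bar{\partial}$ this automatically gives $\bar{\partial}\partial\alpha = 0$, so our hypothesis is strictly weaker, and the conclusion $\partial\alpha = 0$ recovers (and, combined with $\bar{\partial}\alpha = 0$, gives) Deligne's $d$-closedness. The main obstacle thus lies entirely upstream, in Theorem~\ref{main theorem}; the corollary itself is its shadow at $q = 0$, and the forward-looking work to be done is really to verify that the boundary case is not implicitly excluded in the statement or proof of Theorem~\ref{main theorem}.
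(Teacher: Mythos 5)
Your proof is correct and is exactly the paper's argument: apply Theorem~\ref{main theorem} with $q=0$, note that $A^{0,-1}(X,\Omega^{p+1}_X(\log D))=\{0\}$ forces $x=0$, and conclude $\p\alpha=\b{\p}x=0$. Nothing further is needed.
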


It is well-known that Deligne's degeneracy of logarithmic Hodge to de Rham spectral sequences at $E_1$-level \cite{Del71} is a fundamental result and has great impact in algebraic geometry, such as vanishing and injectivity theorems. P. Deligne and L. Illusie \cite{Del87} also proved this degeneracy by using a purely algebraic positive characteristic method.
For compact K\"ahler manifolds, as the second application of Theorem \ref{main theorem}, we can give a geometric and simpler proof of Deligne's degeneracy theorem.
\begin{thm}[=Theorem \ref{Dss}]\label{0Deligne} With the same notations as in Theorem \ref{main theorem},
the logarithmic Hodge to de Rham spectral sequence associated with the Hodge filtration
$$
	E^{p,q}_1=H^q(X,\Omega^p_X(\log D))\Rightarrow \mb{H}^{p+q}(X, \Omega^*_X(\log D))	
$$
degenerates at the $E_1$-level.
\end{thm}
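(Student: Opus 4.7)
The plan is to invoke the $\p\b{\p}$-lemma for logarithmic forms (Theorem \ref{main theorem}) together with its Corollary \ref{closedness} in order to show that every $\b{\p}$-cohomology class with coefficients in $\Omega^{p}_{X}(\log D)$ lifts to a $d$-closed element of the total logarithmic Dolbeault complex. This is the classical criterion for degeneration at $E_{1}$.

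First I would set up the computational framework. The hypercohomology $\mathbb{H}^{k}(X, \Omega^{\ast}_{X}(\log D))$ is computed by the simple complex associated to the double complex $\bigl(A^{0,q}(X, \Omega^{p}_{X}(\log D)),\, \p,\, \b{\p}\bigr)$ with total differential $d=\p+\b{\p}$, equipped with the Hodge filtration $F^{p}=\bigoplus_{r\ge p} A^{0, \ast-r}(X, \Omega^{r}_{X}(\log D))$. Degeneration at $E_{1}$ is equivalent to the surjectivity, for every pair $(p,q)$, of the edge map
\[
F^{p}\mathbb{H}^{p+q}(X, \Omega^{\ast}_{X}(\log D)) \longrightarrow E_{1}^{p,q}=H^{q}(X, \Omega^{p}_{X}(\log D)),\qquad [\tilde{\alpha}]\longmapsto [\tilde{\alpha}^{p,q}],
\]
where $\tilde{\alpha}^{p,q}$ denotes the leading component of $\tilde{\alpha}$ in filtration degree $p$.

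Next, starting from a $\b{\p}$-closed representative $\alpha_{0}\in A^{0,q}(X, \Omega^{p}_{X}(\log D))$, I would construct inductively logarithmic forms $\alpha_{r}\in A^{0, q-r}(X, \Omega^{p+r}_{X}(\log D))$ for $r\ge 1$ satisfying
\[
\b{\p}\alpha_{r} \;=\; -\p\alpha_{r-1}.
\]
If $\alpha_{r-1}$ has already been produced with the analogous relation (setting $\p\alpha_{-1}:=0$), then $\b{\p}\p\alpha_{r-1}=-\p\b{\p}\alpha_{r-1}=\p\p\alpha_{r-2}=0$, so Theorem \ref{main theorem} supplies the desired $\alpha_{r}$ inside the space of logarithmic forms. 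The iteration stops at $r=\min(q,\,n-p)$: when it halts at $r=n-p$, the would-be obstruction $\p\alpha_{n-p}$ takes values in $\Omega^{n+1}_{X}(\log D)=0$ by dimension; when it halts at $r=q$, one has $\alpha_{q}\in A^{0,0}(X, \Omega^{p+q}_{X}(\log D))$ with $\b{\p}\p\alpha_{q}=\p\p\alpha_{q-1}=0$, so $\p\alpha_{q}=0$ by Corollary \ref{closedness}.

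Finally, setting $\tilde{\alpha}:=\sum_{r\ge 0}\alpha_{r}\in F^{p}$, every mixed-bidegree component of $d\tilde{\alpha}$ cancels telescopically by the defining recursion together with the two endpoint observations, so $\tilde{\alpha}$ is $d$-closed; its class in $F^{p}\mathbb{H}^{p+q}$ maps to $[\alpha_{0}]=[\alpha]$ under the edge map, yielding the required surjectivity. The substantive obstacle has already been overcome by Theorem \ref{main theorem}: at each step of the recursion one must solve the $\b{\p}$-equation $\b{\p}x=\p\alpha_{r-1}$ while keeping $x$ inside the subspace of smooth logarithmic forms, a constraint that the naive $L^{2}$-Hodge theory on $X\setminus D$ does not guarantee, and which the current-theoretic harmonic integral argument behind Theorem \ref{main theorem} is designed precisely to supply.
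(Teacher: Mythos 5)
Your proposal is correct, and its engine is the same as the paper's, namely the ability, guaranteed by Theorem \ref{main theorem}, to solve $\b{\p}x=\p\alpha$ \emph{inside} the logarithmic forms whenever $\b{\p}\p\alpha=0$; but the spectral-sequence bookkeeping is genuinely different. The paper works with the explicit description $E_r^{p,q}\cong Z_r^{p,q}/B_r^{p,q}$ of \cite{cfgu} and kills each differential in one stroke: for a class represented by a length-$r$ partial zig-zag, the top term $\alpha_{p+r-1,q-r+1}$ satisfies $\b{\p}\p\alpha_{p+r-1,q-r+1}=0$, so Theorem \ref{main theorem} exhibits $\p\alpha_{p+r-1,q-r+1}$ as $\b{\p}$ of a logarithmic form, hence as an element of $B_1^{p+r,q-r+1}\subseteq B_r^{p+r,q-r+1}$, whence $d_r=0$. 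You instead iterate the theorem to complete every $\b{\p}$-closed $\alpha_0$ to a full $d$-closed element of $F^p$, invoking Corollary \ref{closedness} (equivalently the $q=0$ case of the theorem, where the solution space $A^{0,-1}$ is zero) to terminate the induction; your two endpoint cases and the verification $\b{\p}\p\alpha_{r-1}=\p\p\alpha_{r-2}=0$ are all handled correctly. This costs a little more (an induction plus the closedness corollary) and buys a little more: it produces $d$-closed logarithmic representatives of hypercohomology classes, i.e.\ the stronger statement $Z_1^{p,q}=Z_\infty^{p,q}$. One caveat on your framing: the ``edge map'' $F^p\mb{H}^{p+q}\to E_1^{p,q}$, $[\tilde{\alpha}]\mapsto[\tilde{\alpha}^{p,q}]$, is not obviously well defined, since two $d$-closed representatives in $F^p$ of the same hypercohomology class differ by $d\tilde{\beta}$ with $\tilde{\beta}$ not necessarily in $F^p$, and one needs Theorem \ref{main theorem} once more to see that the resulting term $\p\tilde{\beta}^{p-1,q}$ is $\b{\p}$-exact; moreover, even granting well-definedness, surjectivity onto $E_1^{p,q}$ yields degeneration only after one observes that the map factors through $E_\infty^{p,q}$. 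The clean way to close your argument is to bypass the edge map entirely: since every element of $Z_1^{p,q}$ extends to a full $d$-closed zig-zag, one has $Z_r^{p,q}=Z_1^{p,q}$ for all $r$, and computing $d_r$ with such a zig-zag gives $d_r[\alpha_{p,q}]=[\p\alpha_{p+r-1,q-r+1}]=[-\b{\p}\alpha_{p+r,q-r}]=0$ in $E_r^{p+r,q-r+1}$ because $\text{Im}(\b{\p})=B_1^{p+r,q-r+1}\subseteq B_r^{p+r,q-r+1}$. With that substitution your proof is complete and essentially dual in packaging to the paper's.
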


As a direct corollary of the above theorem,
\begin{equation*}
  \label{cor0.1}
\dim_\mb{C}H^k(X-D,\mb{C})=\sum_{p+q=k}\dim_\mb{C} H^q(X,\Omega^p_X(\log D)).
\end{equation*}	

Similar to Theorem \ref{0Deligne}, Theorem \ref{0sol-CY} gives rise to a dual version of Theorem \ref{0Deligne}.
This duality appears in \cite[Remark 2.11]{Fujinosjv}.
\begin{cor}[=Corollary \ref{Ddual}]\label{ss2}  With the same notations as in Theorem \ref{main theorem},
the spectral sequence associated with the Hodge filtration
\begin{align*}
E^{p,q}_1=H^q(X,\Omega^p_X(\log D)\otimes \mc{O}_X(-D))\Rightarrow 	\mb{H}^{p+q}(X,\Omega^*_X(\log D)\otimes \mc{O}_X(-D))
\end{align*}
	degenerates at $E_1$-level.
\end{cor}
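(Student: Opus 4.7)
The plan is to mirror the proof of Deligne's degeneracy theorem (Theorem \ref{0Deligne}), with Theorem \ref{0sol-CY} playing the role of Theorem \ref{main theorem}. The bridge is the canonical duality
$$\Omega^p_X(\log D) \otimes \mc{O}_X(-D) \;\cong\; T_X^{n-p}(-\log D) \otimes K_X,$$
coming from the perfect pairing $\Omega^p_X(\log D) \otimes \Omega^{n-p}_X(\log D) \to \Omega^n_X(\log D) = K_X(D)$. Under this identification the Dolbeault resolution of $\Omega^p_X(\log D) \otimes \mc{O}_X(-D)$ becomes the complex $A^{n,\bullet}(X, T_X^{n-p}(-\log D))$, which is precisely the setting addressed by Theorem \ref{0sol-CY}.

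Given a class in $E_1^{p,q}$, I would pick a $\b{\p}$-closed representative $\alpha_0 \in A^{n,q}(X, T_X^{n-p}(-\log D))$. Since $\b{\p}\p\alpha_0 = -\p\b{\p}\alpha_0 = 0$, Theorem \ref{0sol-CY} supplies $\alpha_1 \in A^{n,q-1}(X, T_X^{n-p-1}(-\log D))$ with $\b{\p}\alpha_1 = -\p\alpha_0$. Then $\b{\p}\p\alpha_1 = \p^2\alpha_0 = 0$, so Theorem \ref{0sol-CY} applies again, and one iterates to produce $\alpha_k \in A^{n,q-k}(X, T_X^{n-p-k}(-\log D))$ satisfying $\b{\p}\alpha_k = -\p\alpha_{k-1}$ for $k = 1, \ldots, q$, terminating at $\alpha_q \in A^{n,0}(X, T_X^{n-p-q}(-\log D))$. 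Setting $\Phi := \sum_{k=0}^q \alpha_k$ in the total Dolbeault complex, a telescoping computation yields $d\Phi = \p\alpha_q$.

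The main obstacle is to verify that $\p\alpha_q = 0$. This is the edge case of Theorem \ref{0sol-CY} at antiholomorphic degree zero: since the target group $A^{n,-1}(X, T_X^{n-p-q-1}(-\log D))$ is trivial, the solvability conclusion collapses to $\p\alpha_q = 0$, giving the dual analog of Corollary \ref{closedness}. This specialization is handled by the same current-theoretic, harmonic-integral argument that underlies Theorem \ref{0sol-CY}, applied in the edge bidegree. Granted this, $\Phi$ is a $d$-closed total cochain representing a class $[\Phi] \in \mb{H}^{p+q}(X, \Omega^*_X(\log D) \otimes \mc{O}_X(-D))$ that lifts $[\alpha_0]$, and standard verification shows that $[\alpha_0] \mapsto [\Phi]$ splits the surjection $F^p\mb{H}^{p+q} \twoheadrightarrow E_1^{p,q}$. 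Consequently $\dim\mb{H}^k \geq \sum_{p+q=k}\dim E_1^{p,q}$; combined with the reverse Fr\"olicher-type inequality this forces equality, whence $E_1$-degeneration.
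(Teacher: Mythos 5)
Your core construction is essentially the paper's proof: the paper also passes through the identification $A^{0,q}(X,\Omega^p_X(\log D)\otimes \mc{O}_X(-D))=A^{n,q}(X,T_X^{n-p}(-\log D))$ and then runs "the same argument as in Theorem \ref{Dss}", namely the explicit description $E_r^{p,q}=Z_r^{p,q}/B_r^{p,q}$ with $d_r[\alpha_{p,q}]=[\p\alpha_{p+r-1,q-r+1}]$, where Theorem \ref{sol-CY} makes each $\p\alpha_{p+r-1,q-r+1}$ $\b{\p}$-exact. Your iterated chain $\alpha_0,\dots,\alpha_q$ is exactly this, and your edge case $\p\alpha_q=0$ needs no separate current-theoretic argument: Theorem \ref{sol-CY} in antiholomorphic degree zero produces a solution in $A^{n,-1}(X,T_X^{n-p-q-1}(-\log D))=\{0\}$, which is precisely the dual of Corollary \ref{g-No}.

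The step you should repair is the concluding dimension count. There is no natural surjection $F^p\mb{H}^{p+q}\twoheadrightarrow E_1^{p,q}$; the natural surjection is onto $E_\infty^{p,q}$, which is a priori only a subquotient of $E_1^{p,q}$. To deduce $\dim\mb{H}^k\ge\sum_{p+q=k}\dim E_1^{p,q}$ from your lifts you would need their linear independence in $\mb{H}^k$, and the obvious attempt (if $\sum c_i\Phi_i=d\Psi$, look at the leading bidegree) only shows that the leading terms die in $E_\infty^{p_0,q_0}$, i.e.\ lie in $B_\infty^{p_0,q_0}$ rather than $B_1^{p_0,q_0}$ --- closing that gap is essentially the degeneration statement itself, so as written this step is circular. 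The detour is unnecessary: your chain already gives $d_r[\alpha_0]=[\p\alpha_{r-1}]=[-\b{\p}\alpha_r]=0$ in $E_r^{p+r,q-r+1}$ for every $r\ge 1$, since $\b{\p}$-exact forms lie in $B_1\subseteq B_r$. Stopping there yields the degeneration directly and coincides with the paper's argument.
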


The third result of Theorem \ref{main theorem} is an injectivity theorem for compact K\"ahler manifolds, whose algebraic version was first proved by F. Ambro \cite[Theorem 2.1]{Ambro} and the equivalence in the statement was proposed in \cite[Remark 2.6 and Corollary 2.7]{Ambro}.
\begin{cor}[=Corollary \ref{inj}] With the same notations as in Theorem \ref{main theorem}, the restriction homomorphism
\begin{align*}
	H^q(X,\Omega^n_X(\log D))\xrightarrow{i} H^q(U, K_U)
\end{align*}
	is injective, where $U=X-D$. Equivalently, if $\Delta$ is an effective divisor with $\text{Supp}(\Delta)\subset \text{Supp}(D)$, then
 the natural homomorphism induced by the inclusion $\mc{O}_X\subset \mc{O}_X(\Delta)$
	\begin{align*}
		H^q(X,\Omega^n_X(\log D))\xrightarrow{i'} H^q(X, \Omega^n_X(\log D)\otimes\mc{O}_X(\Delta))
	\end{align*}
is injective.
\end{cor}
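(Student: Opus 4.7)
The plan is to deduce injectivity from the degeneracy theorem (Theorem \ref{0Deligne}, itself an application of Theorem \ref{main theorem}): this identifies $H^q(X, \Omega^n_X(\log D))$ with the top Hodge-filtration piece $F^n H^{n+q}(U, \mathbb{C})$, and the restriction map $i$ will factor through that embedding. The second, equivalent formulation then follows from a routine factorization through $\mathcal{O}_X(\Delta)$.

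For the first statement, I would use Deligne's classical identification $\mathbb{H}^{n+q}(X, \Omega^\bullet_X(\log D)) \cong H^{n+q}(U, \mathbb{C})$, with Hodge filtration induced by the stupid truncation $\sigma^{\geq p}\Omega^\bullet_X(\log D)$. Since $\Omega^{n+1}_X(\log D) = 0$, this truncation at $p = n$ collapses to a single sheaf placed in degree $n$, giving
$$
\mathbb{H}^{n+q}(X, \sigma^{\geq n}\Omega^\bullet_X(\log D)) = H^q(X, \Omega^n_X(\log D)).
$$
The $E_1$-degeneracy granted by Theorem \ref{0Deligne} is equivalent to the injectivity $F^n \hookrightarrow \mathbb{H}^{n+q}$, producing the desired embedding $\iota: H^q(X, \Omega^n_X(\log D)) \hookrightarrow H^{n+q}(U, \mathbb{C})$. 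Next I would verify $\iota = s \circ i$, where $s: H^q(U, K_U) \to H^{n+q}(U, \mathbb{C})$ comes from the inclusion of complexes $\Omega^n_U[-n] \hookrightarrow \Omega^\bullet_U$. At the level of Dolbeault representatives this is transparent: a $\bar\partial$-closed log form $\alpha$ restricts to $\alpha|_U \in A^{n,q}(U)$, which is automatically $d$-closed because $\partial(\alpha|_U) \in A^{n+1,q}(U) = 0$ for dimensional reasons, and this single form represents both $s(i[\alpha])$ and $\iota[\alpha]$. Injectivity of $\iota$ thus forces injectivity of $i$.

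For the equivalent formulation, $i$ factors as $i = s' \circ i'$, where $s': H^q(X, \Omega^n_X(\log D)\otimes\mathcal{O}_X(\Delta)) \to H^q(U, K_U)$ is restriction to $U$, well-defined since $\mathcal{O}_X(\Delta)|_U = \mathcal{O}_U$; injectivity of $i$ immediately yields injectivity of $i'$. Conversely, using $\varinjlim_m \mathcal{O}_X(mD) = j_*\mathcal{O}_U$ and the fact that coherent cohomology commutes with filtered colimits, one gets $\varinjlim_m H^q(X, \Omega^n_X(\log D)(mD)) = H^q(U, K_U)$, recovering injectivity of $i$ from injectivity of each $i'$. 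The main obstacle is really the verification of the compatibility $\iota = s \circ i$ between Deligne's spectral-sequence-induced embedding and the concrete Dolbeault restriction, but this is a standard reconciliation of resolutions and introduces no new analytic content once the degeneracy supplied by Theorem \ref{main theorem} is in hand.
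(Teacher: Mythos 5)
Your proposal is correct and follows essentially the same route as the paper: both factor the restriction map through $\mathbb{H}^{n+q}(X,\Omega^{\bullet}_X(\log D))\cong H^{n+q}(U,\mathbb{C})$ via the same commutative square, reduce everything to the injectivity of $H^q(X,\Omega^n_X(\log D))\to\mathbb{H}^{n+q}$, and handle the second formulation by the same factorization $i=s'\circ i'$. The only cosmetic difference is that you obtain the key injectivity by citing the $E_1$-degeneracy of Theorem \ref{Dss} together with the standard equivalence between degeneracy and strictness of the filtration, whereas the paper runs the $p=n$ case directly: if $\alpha=d\beta=\partial\beta_{n-1,q}+\bar\partial\beta_{n,q-1}$, then Theorem \ref{thm2} solves $\partial\beta_{n-1,q}=\bar\partial\gamma$, so $\alpha$ is $\bar\partial$-exact --- the same analytic input in the end.
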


Note that Ambro's algebraic version was generalized by O. Fujino \cite[Theorem 1.1]{Fujinoinj} on a simple normal crossing algebraic variety.

Finally, we describe another three applications of Theorems \ref{main theorem} and \ref{0sol-CY} to logarithmic deformations of complex structures \cite{KKP08,Iacono}.
The local stability of certain geometric structure under deformation is an interesting topic in deformation theory of complex structures, in which the power series method, initiated by Kodaira-Nirenberg-Spencer and Kuranishi \cite{MK,k}, plays a prominent role there.  In \cite{RwZ}, Q. Zhao, the second and third authors presented a power series proof for the classical Kodaira-Spencer's local stabilities of K\"ahler structures. From \cite{Liu}, one can construct a smooth $d$-closed extension for any holomorphic $(n,0)$-form on the central fiber. Inspired by these results, we can consider the problems of $d$-closed extension of logarithmic forms under logarithmic deformations.  The logarithmic deformation of a pair $(X,D)$ is a special deformation of $X$ such that $\cup_{t\in S}D_t$ is a closed analytic subset in $\cup_{t\in S}X_t$ as in Definition \ref{kwawa1}, which is defined and developed in \cite{kawa, kawa1}, while the log Hodge structure theory is also developed in \cite{Kato}.

Let $\bar{\partial}_t$ denote the $\bar{\partial}$-operator on $X_t$. As the first application of Theorem \ref{main theorem} to deformation theory, we have
\begin{thm}[=Theorem \ref{thm3}]\label{main theorem 1} With the same notations as in Theorem \ref{main theorem},
for any logarithmic deformations $(X_t,D_t), t\in S$ of the pair $(X,D)$ with $X_{0}=X$, induced by the Beltrami differential $\varphi:=\varphi(t)\in A^{0,1}(X, T_X(-\log D))$,  and any $\b{\p}$-closed logarithmic $(n,q)$-form $\Omega$ on the central fiber $X$, there exists a small neighborhood $\Delta\subset S$ of $0$ and a smooth family $\Omega(t)$ of logarithmic $(n,q)$-form on the central fiber $X$, such that
$$
e^{i_\varphi}(\Omega(t))\in A^{0,q}(X_t,\Omega^n_{X_t}(\log D_t))
$$
which is $\b{\p}_t$-closed on $X_t$ for any $t\in \Delta$ and satisfies $(e^{i_\varphi}(\Omega(t)))(0)=\Omega$.
\end{thm}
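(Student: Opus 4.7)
The plan is to build $\Omega(t)$ as a formal power series $\Omega(t) = \sum_{k\ge 0} \Omega_k t^k$ (indexed by multi-indices when $\dim S > 1$) with $\Omega_0 = \Omega$, solving the condition $\b{\p}_t(e^{i_{\varphi}}\Omega(t))=0$ order by order using Theorem \ref{main theorem} as the solvability device, and then verifying convergence on a small neighborhood $\Delta\subset S$ of $0$.

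First I would reformulate the closedness condition. By the standard deformation calculus for the $\b{\p}_t$-operator under a Beltrami differential, and since $\p\Omega(t)=0$ for degree reasons whenever $\Omega(t)$ is a logarithmic $(n,q)$-form (as $\Omega^{n+1}_X(\log D)=0$), the equation $\b{\p}_t(e^{i_\varphi}\Omega(t))=0$ becomes, after projection onto the pure $(n,q+1)$-type on $X_t$, an equation of the schematic form
\begin{equation*}
\b{\p}\Omega(t) \;=\; \p\bigl(\Psi(\varphi(t),\Omega(t))\bigr),
\end{equation*}
where $\Psi$ is a polynomial in repeated contractions of $\Omega(t)$ with $\varphi(t)$. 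Because $\varphi(t)$ takes values in $T_X(-\log D)$ and logarithmic forms are preserved under contraction with logarithmic vector fields, $\Psi(\varphi(t),\Omega(t))$ lies in $A^{0,q+1}(X,\Omega^{n-1}_X(\log D))$.

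Expanding in powers of $t$ and comparing coefficients yields the recursion $\b{\p}\Omega_k = \p\Psi_k$, where $\Psi_k$ depends polynomially on $\Omega_0,\ldots,\Omega_{k-1}$ and on the Taylor coefficients of $\varphi(t)$. Assuming inductively that $\Omega_0,\ldots,\Omega_{k-1}$ have been produced, one verifies $\b{\p}\p\Psi_k=0$ by combining the integrability identity $\b{\p}\varphi = \frac{1}{2}[\varphi,\varphi]$ in $A^{0,2}(X,T_X(-\log D))$ with the previously solved equations. Theorem \ref{main theorem} then yields $\Omega_k\in A^{0,q}(X,\Omega^n_X(\log D))$ with $\b{\p}\Omega_k = \p\Psi_k$; to make the choice canonical I would set $\Omega_k = \b{\p}^\ast G(\p\Psi_k)$, where $G$ is the Green operator adapted to the current-theoretic Hodge decomposition used in the proof of Theorem \ref{main theorem}.

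Convergence on a small neighborhood $\Delta$ then follows by a standard Kuranishi-type majorant argument: provided $G$ and $\b{\p}^\ast$ are bounded in a suitable H\"older norm compatible with the mild logarithmic growth along $D$, the polynomial nonlinearity of $\Psi_k$ produces a majorant series forcing geometric decay of $\|\Omega_k\|$, hence convergence of $\sum\Omega_k t^k$ for $t$ small. Smoothness in $t$ and the initial condition $(e^{i_\varphi}\Omega(t))(0)=\Omega$ then follow from elliptic regularity and the construction. The main obstacle is twofold: first, establishing the inductive identity $\b{\p}\p\Psi_k=0$, which is an algebraic computation on mixed logarithmic forms exploiting Jacobi-type relations among $i_\varphi$, $\p$, and $\b{\p}$ in the log-tangent setting; and second, proving uniform H\"older estimates for the logarithmic Green operator obtained from currents, so that the logarithmic nature of each $\Omega_k$ is preserved and the series converges to a smooth family of \emph{logarithmic} forms rather than merely to currents on $X$.
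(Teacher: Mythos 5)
Your overall strategy --- a power-series ansatz $\Omega(t)=\sum_k\Omega_k$, order-by-order solution of the reduced equation via Theorem \ref{main theorem}, and a majorant argument for convergence --- is exactly the route the paper takes, and your two anticipated ingredients (the inductive $\b{\p}\p$-closedness from integrability, which is the logarithmic Tian--Todorov lemma, and the reduction of $\b{\p}_t(e^{i_\varphi}\Omega(t))=0$ to an equation on the central fiber, which is simply $\b{\p}\Omega(t)+\p(\varphi\lrcorner\Omega(t))=0$, linear rather than polynomial in $i_\varphi$) are both present in the paper. However, there is one genuine gap: the theorem asserts $e^{i_\varphi}(\Omega(t))\in A^{0,q}(X_t,\Omega^n_{X_t}(\log D_t))$, i.e.\ that the extended form has logarithmic poles along $D_t$ \emph{with respect to the complex structure of $X_t$}, not merely that each coefficient $\Omega_k$ is logarithmic on the central fiber. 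Your remark that contraction with $T_X(-\log D)$-valued forms preserves log poles only handles the latter. The paper devotes Proposition \ref{prop2} to the former: one compares the local defining functions $R_t$ of $D_t$ and $R_0$ of $D$ and shows, using Kawamata's construction of the logarithmic deformation, that their ratio is smooth and nonvanishing for small $t$, so that $e^{i_\varphi}$ carries $A^{0,q}(X,\Omega^n_X(\log D))$ isomorphically onto $A^{0,q}(X_t,\Omega^n_{X_t}(\log D_t))$. Without this step the stated conclusion is not established.

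Separately, your choice $\Omega_k=\b{\p}^{*}G(\p\Psi_k)$ with $G$ ``the Green operator adapted to the current-theoretic Hodge decomposition'' is the wrong device, and the obstacle you flag (H\"older estimates for a logarithmic Green operator built from currents, and preservation of the logarithmic nature of $\Omega_k$) is an artifact of that choice rather than a real difficulty. In the paper the current theory enters only inside the proof of Theorem \ref{thm2}, to show that the harmonic projection of $\mathcal{I}(\p\alpha)$ vanishes; the actual solution operator is the \emph{bundle-valued} Green operator for the locally free sheaf $E=\Omega^n_X(\log D)=K_X\otimes[D]$ transported through the isomorphism $\mathcal{I}$, namely $\Omega_{l+1}=-\mathcal{I}^{-1}\b{\p}_E^{*}\mb{G}''_E\,\mathcal{I}\bigl(\p\sum_{i+j=l+1}\varphi_i\lrcorner\Omega_j\bigr)$. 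Its output is automatically a smooth logarithmic $(n,q)$-form, and the convergence estimate is the standard Schauder estimate for $\b{\p}_E^{*}\mb{G}''_E$ on a Hermitian holomorphic bundle together with the Morrow--Kodaira estimate for $\p$; with that substitution your majorant argument and the elliptic-regularity step for smoothness in $t$ go through as in the paper.
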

By definition, a \textit{log Calabi-Yau pair} is a pair $(X,D)$ such that  $D$ is a simple normal crossing divisor on an $n$-dimensional K\"ahler manifold $X$ and the logarithmic canonical line bundle $\Omega^n_X(\log D)\cong \mc{O}_X(K_X+D)$ is trivial. As a direct corollary of Theorem \ref{main theorem 1}, one obtains the local stabilities of log Calabi-Yau structures under deformations.

Now we discuss other two applications to logarithmic deformations.
On projective manifolds, L. Katzarkov, M. Kontsevich, and T. Pantev \cite{KKP08} proved the unobstructedness of logarithmic deformations of a log Calabi-Yau pair and a pair on a Calabi-Yau manifold. More precisely, they used Dolbeault type complexes to construct a differential Batalin-Vilkovisky algebra such that the associated differential graded Lie algebra (DGLA) controls the deformation problem. If the differential Batalin-Vilkovisky algebra has a degeneracy property then the associated DGLA is homotopy abelian. Using the notion of the Cartan homotopy, D. Iacono \cite{Iacono} obtained an alternative proof  of the unobstructedness theorems. Both of their proofs rely on the degeneracy of spectral sequences in Theorem \ref{0Deligne} and Corollary \ref{ss2}. Here we present a differential geometric proof which has potential applications to extension problems.

Combining the methods originally from \cite{T87,To89,LSY} and developed in \cite{Liu,RZ,RZ2, RZ15, RwZ}, we use Theorem \ref{main theorem} to prove:
\begin{thm}[=Theorem \ref{Bel-equ}]\label{0CYlog}
Let $(X,D)$ be a log Calabi-Yau pair and $[\varphi_1]\in H^{0,1}(X, T_X(-\log D))$. Then on a small disk of $0$ in $\mathbb{C}^{\dim_{\mathbb{C}} H^{0,1}(X, T_X(-\log D))}$, there exists a holomorphic family $$\varphi(t)\in A^{0,1}(X, T_X(-\log D)),$$  such that
$$
\b{\p}\varphi(t)=\frac{1}{2}[\varphi(t),\varphi(t)],  \quad \frac{\p \varphi}{\p t}(0)=\varphi_1.	
$$
\end{thm}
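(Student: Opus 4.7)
The plan is to build $\varphi(t)$ as a formal power series $\varphi(t)=\sum_{|I|\geq 1}\varphi_I t^I$ in $t\in\mathbb{C}^m$ (with $m=\dim_\mathbb{C} H^{0,1}(X,T_X(-\log D))$) that satisfies the Maurer--Cartan equation order by order, then to establish convergence on a small polydisk by a H\"older majorant argument. This is the Tian--Todorov iteration adapted to the logarithmic tangent bundle, with Theorem \ref{main theorem} playing the role of the classical $\partial\bar\partial$-lemma. Since $\Omega^n_X(\log D)$ is trivial, fix a nowhere-vanishing holomorphic logarithmic $n$-form $\Omega$. Contraction with $\Omega$ gives a $\bar\partial$-linear isomorphism
$$
\iota_\Omega:A^{0,q}(X,T_X(-\log D))\to A^{0,q}(X,\Omega^{n-1}_X(\log D)),\qquad \varphi\mapsto \varphi\lrcorner \Omega,
$$
under which the equations we need to solve become equations on logarithmic forms to which Theorem \ref{main theorem} applies. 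Let $\varphi_{e_j}$ be harmonic representatives of a basis of $H^{0,1}(X,T_X(-\log D))$ with $\varphi_{e_1}$ representing the prescribed class, so $\bar\partial\varphi_{e_j}=0$ and $\partial(\varphi_{e_j}\lrcorner\Omega)=0$.

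Inductively, suppose $\varphi_J$ has been constructed for every $0<|J|<N$ so that the Maurer--Cartan equation holds modulo $t^N$ and $\partial(\varphi_J\lrcorner \Omega)=0$. For each $|I|=N$ put
$$
\Psi_I:=\tfrac12\!\!\!\sum_{\substack{J+K=I\\|J|,|K|\ge 1}}\!\![\varphi_J,\varphi_K],\qquad \alpha_I:=\tfrac12\!\!\!\sum_{\substack{J+K=I\\|J|,|K|\ge 1}}\!\!\varphi_J\lrcorner\varphi_K\lrcorner\Omega.
$$
The graded Jacobi identity together with the inductive Maurer--Cartan relation gives $\bar\partial\Psi_I=0$. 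A direct algebraic computation---the logarithmic analogue of the Tian--Todorov identity, valid because $\Omega$ is $\partial$-closed and the Lie bracket preserves $T_X(-\log D)$---combined with the inductive gauge condition $\partial(\varphi_J\lrcorner\Omega)=0$ yields
$$
\Psi_I\lrcorner \Omega = -\partial\alpha_I.
$$
Since $\bar\partial\partial\alpha_I=-\bar\partial(\Psi_I\lrcorner\Omega)=0$, Theorem \ref{main theorem} applies to $\alpha_I\in A^{0,2}(X,\Omega^{n-2}_X(\log D))$ and produces $\chi_I\in A^{0,1}(X,\Omega^{n-1}_X(\log D))$ with $\bar\partial\chi_I=\partial\alpha_I=-\Psi_I\lrcorner \Omega$. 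Then $\varphi_I:=-\iota_\Omega^{-1}(\chi_I)$ extends the Maurer--Cartan solution one order further. To preserve the gauge hypothesis for the next step, observe that $\partial\chi_I$ is both $\bar\partial$-closed and $\partial$-exact, and invoke Theorem \ref{main theorem} a second time (equivalently, the logarithmic $\partial\bar\partial$-lemma it implies) to find a correction $\eta_I$ with $\chi_I-\eta_I$ still solving the $\bar\partial$-equation but now $\partial$-closed; alternatively, use the bundle-valued Hodge decomposition for currents from the earlier sections to select the canonical solution $\chi_I=\bar\partial^{\ast}\mathbb{G}(\partial\alpha_I)$, whose $\partial$-closedness is forced by the K\"ahler identities and the triviality of the log canonical bundle.

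Convergence then proceeds along the analytic framework of \cite{T87,To89,LSY,Liu,RZ,RZ2,RZ15,RwZ}: introduce H\"older norms $\|\cdot\|_{k,\alpha}$ on log $(n-1,*)$-forms, observe that the linear right inverse of $\bar\partial$ furnished by Theorem \ref{main theorem} is bounded in these norms, derive the quadratic inductive estimate
$$
\|\varphi_I\|_{k,\alpha}\le C\!\!\sum_{J+K=I}\!\|\varphi_J\|_{k,\alpha}\|\varphi_K\|_{k,\alpha},
$$
and dominate the resulting formal series by the classical majorant $\sum_N c^N |t|^N/N^2$, convergent on a polydisk of positive radius. The derivative condition $(\partial\varphi/\partial t)(0)=\varphi_1$ is built into the choice $\varphi_{e_1}=\varphi_1$. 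The main obstacle is analytic: producing a \emph{linear, H\"older-bounded, and $\partial$-closed-valued} right inverse of $\bar\partial$ on log forms. This requires revisiting the harmonic-integral-for-currents proof of Theorem \ref{main theorem} rather than invoking it as a black box, and in particular controlling how the standard elliptic H\"older estimates interact with the mild singularities of logarithmic forms along $D$; this is where the technical core of the argument lies.
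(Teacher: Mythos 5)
Your overall architecture (Tian--Todorov iteration through contraction with the trivializing logarithmic volume form, plus a H\"older majorant for convergence) is the right family of ideas, and the algebra up to the point you flag is sound: $\bar\partial\Psi_I=0$ is Lemma \ref{log lemma1}, the identity $\Psi_I\lrcorner\Omega=-\partial\alpha_I$ under the gauge hypothesis is Lemma \ref{TianTodorov}, and Theorem \ref{thm2} does produce $\chi_I$ with $\bar\partial\chi_I=\partial\alpha_I$. The genuine gap is exactly where you locate ``the technical core'': propagating the gauge condition $\partial(\varphi_I\lrcorner\Omega)=0$. Neither of your two proposed fixes is available from the paper's results. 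Theorem \ref{thm2} only asserts that $\partial\alpha_I$ is $\bar\partial$-exact in the space of logarithmic forms; it is \emph{not} the full $\partial\bar\partial$-lemma, i.e., it does not write $\partial\alpha_I=\partial\bar\partial\beta$, so you cannot take the solution in the form $\chi_I=-\partial\beta$ and get $\partial$-closedness for free. Applying Theorem \ref{thm2} ``a second time'' to $\chi_I$ produces some $y\in A^{0,0}(X,\Omega^n_X(\log D))$ with $\bar\partial y=\partial\chi_I$, but to correct $\chi_I$ you would need a $\bar\partial$-closed $\mu$ with $\partial\mu=\partial\chi_I$, which is a $\partial$-equation the paper never solves. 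And the canonical solution $\chi_I=\mathcal{I}^{-1}\bar\partial_E^*\mathbb{G}_E''\mathcal{I}(\partial\alpha_I)$ is built from the adjoint with respect to an \emph{arbitrary} Hermitian metric on the bundle associated with $\Omega^{n-1}_X(\log D)$; there is no K\"ahler identity linking this $\bar\partial_E^*$ with the operator $\partial$ acting on logarithmic forms, so its $\partial$-closedness is unjustified (triviality of $\Omega^n_X(\log D)$ does not help, since $\Omega^{n-1}_X(\log D)\cong T_X(-\log D)$ is in general nontrivial).

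The paper sidesteps this entirely: instead of imposing a gauge condition relative to a fixed $\Omega$, it couples the Maurer--Cartan equation to an auxiliary unknown family $\Omega(t)\in A^{0,0}(X,\Omega^n_X(\log D))$ and solves the system $(\bar\partial+\tfrac12\partial\circ i_\varphi)(i_\varphi\Omega(t))=0$, $(\bar\partial+\partial\circ i_\varphi)\Omega(t)=0$ simultaneously by induction (Proposition \ref{prop3} together with \eqref{Omega} and \eqref{log varphi}); the second equation supplies exactly the relation $\bar\partial\Omega_k=-\sum\partial(\varphi_i\lrcorner\Omega_j)$ that your gauge condition was meant to provide, and each inductive step only ever invokes Theorem \ref{thm2} to solve a $\bar\partial$-equation, never a $\partial$-equation. (The price is a more delicate simultaneous convergence and regularity argument for the pair $(\varphi(t),\Omega(t))$, which the paper carries out via the coupled elliptic system \eqref{log 1.5}.) If you want to rescue your single-unknown scheme you would first need to upgrade Theorem \ref{thm2} to a genuine $\partial\bar\partial$-lemma for logarithmic forms ($\partial$-exact and $\bar\partial$-closed implies $\partial\bar\partial$-exact), which is a strictly stronger statement than anything established in the paper.
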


Recall that a compact $n$-dimensional K\"ahler manifold $X$ is called a \emph{Calabi-Yau manifold} if it admits a nowhere vanishing holomorphic $(n,0)$-form.
Similar to Theorem \ref{0CYlog}, by use of Theorem \ref{0sol-CY}, one obtains another unobstructedness theorem for logarithmic deformation.
\begin{thm}[=Theorem \ref{log}]\label{0log}
Let $X$ be a Calabi-Yau manifold and $D$ a simple normal crossing divisor on $X$. Then
the pair $(X,D)$ has unobstructed  logarithmic deformations.
\end{thm}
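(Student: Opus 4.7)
The logarithmic deformations of $(X,D)$ are governed by the differential graded Lie algebra $\bigl(A^{0,*}(X, T_X(-\log D)), \bar{\partial}, [\cdot,\cdot]\bigr)$, so unobstructedness amounts to solving, for every $\bar{\partial}$-closed $\varphi_1 \in A^{0,1}(X, T_X(-\log D))$, the Maurer-Cartan equation $\bar{\partial}\varphi(t) = \tfrac{1}{2}[\varphi(t),\varphi(t)]$ by a convergent power series
\[
\varphi(t) = \sum_{|I|\geq 1} \varphi_I t^I, \qquad \varphi_I \in A^{0,1}(X, T_X(-\log D)),
\]
on a polydisc $\Delta \subset \mathbb{C}^N$ with $N = \dim_{\mathbb{C}} H^{0,1}(X, T_X(-\log D))$ and $\varphi_1$ the prescribed linear term. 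The strategy is to run the Tian-Todorov-Kuranishi recursion on the Calabi-Yau $X$, using Theorem \ref{0sol-CY} in place of the classical $\partial\bar{\partial}$-lemma at each order.

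Let $\Omega$ denote the nowhere-vanishing holomorphic $n$-form on $X$. Contraction with $\Omega$ induces an isomorphism $T_X^p(-\log D) \cong \Omega_X^{n-p}(\log D) \otimes \mathcal{O}_X(-D)$ that realizes $A^{0,q}(X, T_X^p(-\log D))$ as precisely the logarithmic subspace of $A^{n-p,q}(X)$ appearing in Theorem \ref{0sol-CY}. From $d\Omega = 0$ and Cartan's formula one derives the logarithmic Tian-Todorov identity, writing $\iota_{[\varphi,\psi]}\Omega$ as $-\partial(\iota_\varphi \iota_\psi \Omega)$ plus correction terms proportional to $\partial(\iota_\varphi\Omega)$ and $\partial(\iota_\psi\Omega)$. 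Pick harmonic representatives $\eta_1,\ldots,\eta_N$ of a basis of $H^{0,1}(X, T_X(-\log D))$, adjusted via the log $\partial\bar{\partial}$-lemma extracted from Theorem \ref{main theorem} so that $\partial(\iota_{\eta_i}\Omega)=0$, and set $\varphi_{e_i} := \eta_i$. Inductively suppose $\varphi_J$ has been produced for $|J|<k$ obeying the truncated Maurer-Cartan equation and the gauge condition $\partial(\iota_{\varphi_J}\Omega)=0$, and for $|I|=k$ define
\[
\Psi_I := \tfrac{1}{2}\!\!\sum_{J+K=I,\, J,K\neq 0}[\varphi_J,\varphi_K], \qquad \alpha_I := -\tfrac{1}{2}\sum_{J+K=I}\iota_{\varphi_J}\iota_{\varphi_K}\Omega.
\]
The Jacobi identity together with the inductive Maurer-Cartan relations forces $\bar{\partial}\Psi_I=0$, while the Tian-Todorov identity combined with the gauge hypothesis gives $\iota_{\Psi_I}\Omega = \partial\alpha_I$; here $\alpha_I$ lies in the log subspace $A^{n,2}(X, T_X^2(-\log D)) \subset A^{n-2,2}(X)$ and satisfies $\bar{\partial}\partial\alpha_I = 0$. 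Theorem \ref{0sol-CY} applied with $p=2$, $n-q=2$ then yields $x_I \in A^{n,1}(X, T_X(-\log D)) \subset A^{n-1,1}(X)$ with $\bar{\partial}x_I = \partial\alpha_I$. Since $\partial x_I$ is $\bar{\partial}$-closed and $\partial$-exact, the log $\partial\bar{\partial}$-lemma (a consequence of Theorem \ref{main theorem}) furnishes $y_I$ with $\partial\bar{\partial}y_I = \partial x_I$, so replacing $x_I$ by $x_I - \bar{\partial}y_I$ enforces $\partial x_I = 0$ while preserving $\bar{\partial}x_I = \partial\alpha_I$. Setting $\varphi_I := (\iota_\bullet\Omega)^{-1}(x_I)$ completes the inductive step with the gauge intact.

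Convergence of the resulting formal series on a small polydisc follows from standard Hölder-norm elliptic estimates for $\bar{\partial} + \bar{\partial}^*$ on the smooth vector bundle $T_X(-\log D)$, combined with the iterative majorization arguments originating in Kodaira-Spencer-Kuranishi and Tian-Todorov and refined in \cite{LSY, RZ, RZ2, RZ15, RwZ}, yielding bounds of the form $\|\varphi_I\|_{C^{k,\alpha}} \leq C^{|I|}/|I|^2$ and hence absolute convergence on a neighborhood of $0 \in \mathbb{C}^N$. The main obstacle lies on the algebraic side rather than the analytic one: verifying the logarithmic Tian-Todorov identity and the precise behavior of the contraction $\iota_\bullet \Omega$ on the logarithmic Dolbeault spaces appearing in Theorem \ref{0sol-CY}, and ensuring that the gauge condition $\partial(\iota_{\varphi_I}\Omega) = 0$ propagates inductively through the current-theoretic solutions produced by Theorem \ref{0sol-CY} (smoothed via elliptic regularity, since each $\partial\alpha_I$ is smooth). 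Once this algebraic backbone is secured, the convergence estimates and the passage from a formal solution to a genuine holomorphic family of logarithmic Beltrami differentials proceed along classical lines.
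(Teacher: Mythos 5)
Your overall strategy is the right one and matches the paper's in its main ingredients: contract with the nowhere-vanishing holomorphic $(n,0)$-form $\Omega$ to identify $A^{0,q}(X,T_X^p(-\log D))$ with $A^{n,q}(X,T_X^p(-\log D))\subset A^{n-p,q}(X)$, use a logarithmic Tian--Todorov identity, solve the order-by-order $\b\p$-equations with Theorem \ref{0sol-CY}, and close with H\"older-norm majorization and elliptic regularity. The structural difference is in how the induction is closed. You run the classical Todorov scheme, which requires propagating the gauge condition $\p(\iota_{\varphi_I}\Omega)=0$. The paper deliberately avoids this condition: it couples $\varphi(t)$ to an auxiliary smooth family $\Omega(t)$ of $(n,0)$-forms and solves the system $(\b\p+\p\circ i_\varphi)\Omega(t)=0$, $(\b\p+\tfrac12\p\circ i_\varphi)(i_\varphi\Omega(t))=0$ (Proposition \ref{prop3} and Theorem \ref{Bel-equ}, transplanted verbatim in the proof of Theorem \ref{log}); the varying $\Omega(t)$ absorbs exactly the failure of your gauge condition.

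The gap is in your gauge-fixing step, and it is not cosmetic. In the classical case $D=\emptyset$ the condition $\p(\iota_{\varphi_I}\Omega)=0$ is automatic because the solution is $\b\p^*\mb{G}(\cdots)$ and the K\"ahler identities give $\p\b\p^*\mb{G}\p=-\b\p^*\mb{G}\p\p=0$. Here the solution operator of Theorem \ref{0sol-CY} is $\mathcal{I}^*\b{\p}_E^*\mb{G}''_E(\mathcal{I}^*)^{-1}$ for the auxiliary holomorphic bundle $E=(E^{p-1})^*$, on whose valued forms there is no $\p$-operator compatible with the K\"ahler identities, so $\p x_I=0$ is not automatic. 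Your repair --- write $\p x_I=\p\b\p y_I$ and replace $x_I$ by $x_I-\b\p y_I$ --- requires $\b\p y_I$ to lie in the logarithmic subspace $A^{n,1}(X,T_X(-\log D))\subset A^{n-1,1}(X)$, i.e.\ the $(n-1,1)$-form must degenerate along $D$ in the precise way described in the local expression of Theorem \ref{sol-CY}'s proof; the ordinary $\p\b\p$-lemma on $X$ gives no such control, and the ``log $\p\b\p$-lemma'' you cite is mis-attributed (Theorem \ref{main theorem} concerns $\Omega^p_X(\log D)$-valued forms, the dual family of spaces) and is not established in the paper in the form you need (a $\b\p$-potential for $\p x_I$ inside the complex $A^{n,\bullet}(X,T_X^{\bullet}(-\log D))$). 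Without it, $\varphi_I=(\iota_\bullet\Omega)^{-1}(x_I-\b\p y_I)$ would only be a $T_X$-valued, not $T_X(-\log D)$-valued, form, which destroys the logarithmic deformation. The same objection applies to your initial adjustment of the harmonic representatives $\eta_i$. Either prove the required logarithmic $\p\b\p$-lemma for the spaces $A^{n,q}(X,T^p_X(-\log D))$ (plausible by the residue/current techniques of Section \ref{section2}, but a genuine additional statement), or switch to the paper's coupled system with the auxiliary family $\Omega(t)$, which sidesteps the gauge condition entirely.
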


This article is organized as follows. In Section \ref{section1}, we introduce the definitions and basic properties of sheaves of logarithmic differential forms, Poincar\'e residues and currents, and describe Kodaira and de Rham's Hodge Theorem in the sense of currents. In Section \ref{section2}, we will prove main Theorems \ref{main theorem} and \ref{0sol-CY}. We present the applications of two main theorems mentioned above to algebraic geometry and logarithmic deformation unobstructedness theorems in Sections \ref{section3} and \ref{section4}, respectively.

\textbf{Acknowledgement}:
The second author would like to express his
gratitude to Professor J.-P. Demailly, Dr. Ya Deng, Long Li and Jian Wang
of Institut Fourier, Universit\'{e} Grenoble Alpes, where this paper was completed and
there were many stimulating discussions on it and other various aspects of complex geometry.
We express our sincere gratitude to Dr. Jie Tu and Professor Shin-ichi Matsumura for their careful reading our
manuscript, many useful comments and pointing out the much related paper \cite{ma}, and also Professors F. Ambro and O. Fujino
for their interest in this paper.

\section{Logarithmic forms and currents}\label{section1}
In this section, we introduce some basic facts and notations on the sheaf of logarithmic forms, Poincar\'e residue and currents, to be used throughout this paper. For more details, one may refer to \cite{Dem, Viehweg, Griffith, kawa, Kodaira, Nog}.

Let $(X,\omega)$ be a compact K\"ahler manifold of dimension $n$, and let $D$ be a simple normal crossing divisor on it, i.e., $D=\sum_{i=1}^r D_i$, where  the $D_i$ are distinct smooth hypersurfaces intersecting transversely in $X$.

Denote by $\tau: Y=X-D\to X$ the natural inclusion and
$$
\Omega^p_X(*D)=\lim_{\rightarrow\atop \nu}\Omega^p_X(\nu\cdot D)=\tau_*\Omega^p_Y.
$$
Then $(\Omega^{\bullet}_X(*D),d)$ is a complex. The sheaf of logarithmic forms $$\Omega^p_X(\log D)$$ (introduced by Deligne in \cite{De}) is defined as the subsheaf of $\Omega^p_X(*D)$ with logarithmic poles along $D$, i.e., for any open subset $V\subset X$,
$$\Gamma(V,\Omega^p_X(\log D))=\{ \alpha\in \Gamma(V,\Omega^p_X(*D)): \alpha \,\,\text{and}\,\, d\alpha\,\, \text{have simple poles along}\,\, D\}.$$
 From (\cite[II, 3.1-3.7]{Del70} or \cite[Properties 2.2]{Viehweg}), the log complex $(\Omega^{\bullet}_X(\log D), d)$ is a subcomplex of $(\Omega^{\bullet}_X(*D), d)$ and $\Omega^p_X(\log D)$ is locally free,
 $$\Omega^p_X(\log D)=\wedge^p\Omega^1_X(\log D).$$

 For any $z\in X$, which $k$ of these $D_i$ pass, we may choose local holomorphic coordinates $\{z^1,\cdots, z^n\}$ in a small neighborhood $U$ of $z=(0,\cdots, 0)$ such that
 $$D\cap U=\{z^1\cdots z^k=0\}$$
 is the union of coordinates hyperplanes. Such a pair
\begin{equation}\label{lcs}
(U,\{z^1,\cdots, z^n\})
\end{equation}
is called a \textit{logarithmic coordinate system} \cite[Definition 1]{kawa}.
 Then $\Omega^p_X(\log D)$ is generated by the holomorphic forms and logarithmic differentials $dz^i/z^i$ ($i=1,\ldots, k$), i.e.,
 \begin{align}\label{1.6}
 \Omega^p_X(\log D)=\Omega^p_X\left\{\frac{dz^1}{z^1},\cdots,\frac{dz^k}{z^k}\right\}.	
 \end{align}
Denote by $$A^{0,q}(X,\Omega^p_X(\log D))$$ the space of smooth $(0,q)$-forms on $X$ with values in $\Omega^p_X(\log D)$, and call an element of $A^{0,q}(X,\Omega^p_X(\log D))$  a \emph{logarithmic $(p,q)$-form}.

For any $\alpha\in A^{0,q}(X,\Omega^p_X(\log D))$, we can write
\begin{align}\label{expression of alpha}
\alpha=\alpha_1+\frac{dz^1}{z^1}\wedge \alpha_2	
\end{align}
on $U$, where $\alpha_1$ does not contain $dz^1$ and $\alpha_2\in A^{0,q}(U,\Omega^{p-1}_X(\log \sum_{i=2}^rD_i))$. Denoting by
$$\iota_{D_i}:D_i\hookrightarrow X$$
the natural inclusion, without loss of generality, we may assume that $$\{z^1=0\}=D_1\cap U,$$ and put
$$\label{defnRes}
\text{Res}_{D_1}(\alpha)=\iota^*_{D_1}(\alpha_2)	
$$
on $D_1\cap U$. Then $\text{Res}_{D_1}(\alpha)$ is globally well-defined and
$$
\text{Res}_{D_1}(\alpha)\in A^{0,q}(D_1,\Omega^{p-1}_{D_1}(\log \sum_{i=2}^r D_i\cap D_1)).	
$$
 Set the so-called the \emph{Poincar\'e residue} (cf. \cite[\S 2]{Nog}) as
$$
 \text{Res}(\alpha)=\sum_{i=1}^r\text{Res}_{D_i}(\alpha).
$$
And we also define
$$
\text{Res}_{D_{i_1}\cdots D_{i_l}}(\alpha)=\text{Res}_{D_{i_1}\cap\cdots \cap D_{i_l}}\circ \cdots\circ\text{Res}_{D_{i_1}\cap D_{i_2}}\circ \text{Res}_{D_{i_1}}(\alpha),
$$
which lies in $$A^{0,q}(D_{i_1}\cap\cdots \cap D_{i_l}, \Omega^{p-l}_{D_{i_1}\cap\cdots \cap D_{i_l}}(\log \sum_{j\neq \{i_1,\ldots, i_l\}}D_j\cap D_{i_1}\cap\cdots \cap D_{i_l})).$$  From \cite[(2.3)]{Nog}, one has
\begin{align}\label{Reszero}
	\text{Res}_{D_{i_1}\cdots D_{i_{j}} D_{i_{j+1}}\cdots D_{i_l}}(\alpha)+\text{Res}_{D_{i_1}\cdots D_{i_{j+1}} D_{i_{j}}\cdots D_{i_l}}(\alpha)=0.
\end{align}
We will consider $\text{Res}(\alpha)$ as a current of bidegree $(p,q+1)$
\begin{align}\label{2.2}
\text{Res}(\alpha)(\beta)=\sum_{i=1}^r\int_{D_i}\text{Res}_{D_i}(\alpha)\wedge \iota^*_{D_i}(\beta)	
\end{align}
for any smooth $(n-p,n-q-1)$-form $\beta$ on $X$.

Recall that a \textit{current} of bidegree $(p,q)$ on a  compact K\"ahler or generally complex manifold $X$ is a differential $(p,q)$-form with distribution coefficients. We refer the readers to \cite[Chapter I.2 and Chapter III]{Dem} for a comprehensive introduction to current theory. The space of currents of bidegree $(p,q)$ over $X$ will be denoted by $\mc{D}'^{p,q}(X)$ and it is topologically dual to the space $A^{n-p,n-q}(X)$ of smooth differential forms of bidegree $(n-p,n-q)$. There are two classical examples concerning currents, which are very useful later.

\begin{ex}\label{ex1}
\begin{itemize}
	\item[(1)] Let $S\subset X$ be a closed $p$-dimensional complex submanifold with the canonical orientation. Then the integral over $S$ gives a $(p,p)$-bidimensional current, denoted by $T_S$, as
$$
	T_S(\alpha)=\int_S \alpha,\quad \alpha\in A^{p,p}(X)
$$
since each $(r,s)$-form of total degree $r+s=2p$ has zero restriction to $Z$ unless $(r,s)=(p,p)$.
\item[(2)] For any complex differential $(p,q)$-form $\alpha$ with $L_{loc}^1$ coefficients on $X$, there is a current $T_\alpha$  associated with $\alpha$, such that
for any continuous $(n-p,n-q)$-form $\beta$ on $X$
$$
T_{\alpha}(\beta)=\int_X \alpha\wedge\beta.	
$$
\end{itemize}	
\end{ex}

By Example \ref{ex1}, we denote by $T_{\text{Res}_{D_i}(\alpha)}$ the current on $D_i$ associated with $\text{Res}_{D_i}(\alpha)$, and then rewrite (\ref{2.2}) as
\begin{align}\label{Rescurrent}
	\text{Res}(\alpha)(\beta)=\sum_{i=1}^r T_{\text{Res}_{D_i}(\alpha)}(\iota^*_{D_i}(\beta)).
\end{align}

For any $(p,q)$-current $T$, the exterior derivative $dT$ and the adjoint $*T$ are defined by (cf. \cite{Kodaira})
$$
(dT)(\alpha)=(-1)^{p+q+1}T(d\alpha),\quad (*T)(\alpha)=(-1)^{p+q}T(*\alpha),	
$$
where the star operator $*$ is defined by
$$
\alpha\wedge *\beta=\langle\alpha,\beta\rangle\frac{\omega^n}{n!}	
$$
for any smooth $(p,q)$-forms $\alpha,\beta$, and the inner $\langle\cdot,\cdot\rangle$ is induced by  $\omega$ on the space $A^{p,q}(X)$ of $(p,q)$-forms.

Set $d^*=-*d*$, $\Delta=dd^*+d^*d$. Then:
 \begin{thm}[\cite{de}]\label{thm1}
 There exists one (and only one) linear operator $\mb{G}$ mapping any $(p,q)$-current $T$ into a $(p,q)$-current $\mb{G}T$ which has the following properties:
$$
 \Delta\mb{G}T=\mb{G}\Delta T=T-\mb{H}T, \quad \mb{H}\mb{G}T=\mb{G}\mb{H}T=0, 	
 $$
	where $\mb{H}$ is the harmonic projection, defined by
	$$\mb{H}T=\sum_{k=1}^{N}T(*e_k)e_k,$$
with $N=\dim_{\mathbb{C}} H^{p,q}(X,\mb{C})$ and $\{e_k\}_{k=1}^N$ the orthonormal harmonic $(p,q)$-forms.
 \end{thm}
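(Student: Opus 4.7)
The plan is to reduce the current statement to the classical smooth Hodge decomposition on $X$ and then extend the Green's operator by duality. On the compact K\"ahler manifold $X$, the Laplacian $\Delta$ is a second-order elliptic formally self-adjoint operator, so standard elliptic theory supplies an $L^{2}$-orthogonal decomposition $A^{p,q}(X)=\mathcal{H}^{p,q}(X)\oplus\Delta A^{p,q}(X)$ with $\mathcal{H}^{p,q}(X)$ finite-dimensional, together with a smooth Green's operator $G\colon A^{p,q}(X)\to A^{p,q}(X)$ obeying $\Delta G=G\Delta=I-\mathbb{H}$ and $\mathbb{H}G=G\mathbb{H}=0$. Because $G$ annihilates harmonic forms and inverts $\Delta$ on its range, $G$ is formally self-adjoint for the Serre-type pairing $(\alpha,\beta)\mapsto \int_{X}\alpha\wedge\beta$ between complementary bidegrees.

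With this in hand, for $T\in\mathcal{D}'^{p,q}(X)$ and any test form $\beta\in A^{n-p,n-q}(X)$ I would set
$$(\mathbb{G}T)(\beta):=T(G\beta).$$
Interior elliptic regularity for $\Delta$ guarantees that $G$ is continuous in the Fr\'echet topology on $A^{n-p,n-q}(X)$, so $\mathbb{G}T$ is a well-defined current of bidegree $(p,q)$; linearity in $T$ is immediate, and for $T=T_{\alpha}$ smooth this definition agrees with $T_{G\alpha}$ by the self-adjointness of $G$. To verify the four relations, I would dualize $d^{*}=-*d*$ and $\Delta=dd^{*}+d^{*}d$ to obtain $(\Delta T)(\alpha)=T(\Delta\alpha)$ for every test form $\alpha$; then
$$(\Delta\mathbb{G}T)(\beta)=(\mathbb{G}T)(\Delta\beta)=T(G\Delta\beta)=T(\beta-\mathbb{H}\beta).$$
The explicit formula $\mathbb{H}T=\sum_{k}T(*e_{k})e_{k}$, combined with the symmetry of the smooth harmonic projector under the pairing $\int\alpha\wedge\beta$ (itself a consequence of $*$ mapping $\mathcal{H}^{p,q}(X)$ isomorphically onto $\mathcal{H}^{n-p,n-q}(X)$ up to sign), then yields $(\mathbb{H}T)(\beta)=T(\mathbb{H}\beta)$. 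Putting these together gives $\Delta\mathbb{G}T=T-\mathbb{H}T$, and the remaining identities $\mathbb{G}\Delta T=T-\mathbb{H}T$ and $\mathbb{H}\mathbb{G}T=\mathbb{G}\mathbb{H}T=0$ follow by dualizing their smooth counterparts.

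For uniqueness, suppose $\mathbb{G}'$ is any other operator satisfying the same four relations. Then $S:=(\mathbb{G}-\mathbb{G}')T$ satisfies $\Delta S=0$ in the distributional sense and $\mathbb{H}S=0$. A Weyl-type elliptic regularity statement for $\Delta$ on compact manifolds forces $S$ to be represented by a smooth harmonic form, and then $\mathbb{H}S=0$ forces $S=0$. I expect the most delicate step to be precisely this regularity input: one must know that every distributional solution of $\Delta S=0$ is smooth and harmonic, a fact that simultaneously underlies the continuity of $G$ used in the definition of $\mathbb{G}T$ and the uniqueness argument. This is the classical result that underpins de Rham's regularization of currents; once it is invoked, the rest of the proof is a formal duality argument on top of the smooth Hodge theorem.
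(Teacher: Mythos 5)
Your argument is correct: defining $\mb{G}T$ by transposing the smooth Green's operator, $(\mb{G}T)(\beta):=T(G\beta)$, and verifying the four identities by duality, with uniqueness from Weyl's lemma plus $\mb{H}S=S$ for smooth harmonic $S$, is the standard proof of this classical fact. The paper itself offers no proof — it cites de Rham--Kodaira's \emph{Harmonic integrals} — but your construction is exactly consistent with the identity $(\mb{G}T)(\alpha)=T(\mb{G}\alpha)$ that the paper derives from the stated properties immediately after the theorem, so there is nothing to compare beyond noting that your reconstruction matches the intended operator.
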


From the definitions of $\Delta$ and $\mb{H}$, one obtains
\begin{align}\label{1.4}
(\Delta T)(\alpha)=T(\Delta\alpha), \quad (\mb{H}T)(\alpha)=T(\mb{H}\alpha)	
\end{align}
for any $\alpha\in A^{n-p,n-q}(X)$. By Theorem \ref{thm1}, we have
$$
(\mb{G}T)(\alpha)=(\mb{G}T)(\Delta\mb{G}\alpha+\mb{H}\alpha)=(\Delta\mb{G}T)(\mb{G}\alpha)=T(\mb{G}\alpha).	
$$

One can also define operators $\p$, $\b{\p}$, $\p^*$ and $\b{\p}^*$ acting on a $(p,q)$-current $T$ by a similar way,
\begin{align}\label{1.1}
(\p T)(\alpha)=(-1)^{p+q+1}T(\p\alpha),\quad (\b{\p}T)(\alpha)=(-1)^{p+q+1}T(\b{\p}\alpha)	
\end{align}
and
\begin{align}\label{1.2}
(\p^*T)(\alpha)=(-1)^{p+q}T(\p^*\alpha),\quad (\b{\p}^*T)(\alpha)=(-1)^{p+q}T(\b{\p}^*\alpha)	
\end{align}
for any smooth $\alpha$. Setting $\Delta'=\p^*\p+\p\p^*$ and $\Delta''=\b{\p}^*\b{\p}+\b{\p}\b{\p}^*$, one deduces from (\ref{1.1}) and (\ref{1.2}) that
\begin{align}\label{1.5}
(\Delta'T)(\alpha)=T(\Delta'\alpha),\quad (\Delta''T)(\alpha)=T(\Delta''\alpha).	
\end{align}
Note that $\Delta'=\Delta''=\frac{1}{2}\Delta$ on smooth forms since $(X,\omega)$ is a K\"ahler manifold, and by (\ref{1.4}) and (\ref{1.5}),
\begin{align}\label{1.3}
\Delta'T=\Delta''T=\frac{1}{2}\Delta T.	
\end{align}
If one sets $\mb{G}'=\mb{G}''=2\mb{G}$, then it follows from (\ref{1.3}) and Theorem \ref{thm1} that
\begin{align}\label{1.7}
\Delta'\mb{G}'T=\Delta''\mb{G}''T=T-\mb{H}T.	
\end{align}

\section{Solving two $\b{\p}$-equations for logarithmic forms} \label{section2}

In this section, we will solve two $\b{\p}$-equations in terms of logarithmic forms on a compact K\"ahler manifold, which is the starting point of this paper.

Given a compact K\"ahler manifold $X$ and a simple normal crossing divisor $$D=\sum_{i=1}^r D_i$$ on it, we first consider the equation
\begin{align}\label{main equation}
\b{\p}x=\p\alpha	
\end{align}
for any $\alpha\in A^{0,q}(X,\Omega^p_X(\log D))$ with $\b{\p}\p\alpha=0$.

According to \cite{Nog} for example, for any $\alpha\in A^{0,q}(X,\Omega^p_X(\log D))$, there is a current  $T_\alpha\in \mc{D}'^{p,q}(X)$ associated with it, which is defined by
$$
T_{\alpha}(\beta)=\int_X \alpha\wedge \beta,\quad \beta\in A^{n-p,n-q}(X).	$$
 Under the logarithmic coordinate system \eqref{lcs}, one may assume that $D_i=\{z^i=0\}$ locally and
let $$(D_i)_{\epsilon}:=\{|z^i|<\epsilon\}$$ be the $\epsilon$-tubular neighborhood of $D_i$. For any $\beta\in A^{n-p-1,n-q}(X)$, one has
\begin{align}\label{2.1}
\begin{split}
	(\p T_{\alpha}-T_{\p\alpha})(\beta)&=\int_X ((-1)^{p+q+1}\alpha\wedge\p\beta-\p\alpha\wedge\beta)\\
	&=-\int_X\p(\alpha\wedge\beta)\\
    &=-\int_X d(\alpha\wedge \beta)\\
	&=-\lim_{\epsilon\to 0}\int_{X-\cup_{i=1}^r(D_i)_{\epsilon}}d(\alpha\wedge\beta)\\
	&=\lim_{\epsilon\to 0}\sum_{i=1}^r \int_{\p(D_i)_{\epsilon}-\cup_{i\neq j}^r(D_j)_{\epsilon}}\alpha\wedge \beta\\
	&=2\pi\sqrt{-1}\sum_{i=1}^r\int_{D_i}\text{Res}_{D_i}(\alpha)\wedge \iota^*_{D_i}(\beta)\\
    &=0,
	\end{split}
\end{align}
where the last equality holds since the degree of $\text{Res}_{D_i}(\alpha)\wedge \iota^*_{D_i}(\beta)$ is $(n-2,n)$, not compatible with the dimension of $D_i$, and the last but one equality follows from a polar coordinate calculation. In fact, by the expression of (\ref{expression of alpha}), one has
\begin{align*}
	\lim_{\epsilon\to 0}\int_{\p (D_1)_{\epsilon}-\cup_{i\neq 1}^r(D_i)_{\epsilon}}\alpha\wedge \beta &=\lim_{\epsilon\to 0}\int_{\p (D_1)_{\epsilon}-\cup_{i\neq 1}^r(D_i)_{\epsilon}}\frac{dz^1}{z^1}\wedge \alpha_2\wedge \beta\\
	&=\lim_{\epsilon\to 0}\int_{D_1-\cup_{i\neq 1}^r(D_i)_{\epsilon}}\left(\int_{|z^1|=\epsilon}\frac{dz^1}{z^1}\right)\iota^*_{D_1}(\alpha_2\wedge \beta)\\
	&=2\pi\sqrt{-1}\int_{D_1}\text{Res}_{D_1}(\alpha)\wedge \iota^*_{D_1}(\beta).
\end{align*}	

By a similar computation to (\ref{2.1}), one has
$$
	dT_{\alpha}-T_{d\alpha}=2\pi\sqrt{-1}\text{Res}(\alpha).
$$
Combining it with (\ref{2.1}) gives
\begin{align}\label{2.6}
\b{\p}T_{\alpha}-T_{\b{\p}\alpha}=2\pi\sqrt{-1}\text{Res}(\alpha).	
\end{align}
These calculations are inspired by \cite[Formula (2.2)]{Nog}.

Let $$E^p$$ be the holomorphic vector bundle on $X$ associated with the locally free sheaf $\Omega^p_X(\log D)$. Then there exists an isomorphism $\mathcal{I}$ between the spaces of logarithmic $(p,q)$-forms and $E^p$-valued $(0,q)$-forms,
\begin{equation}\label{iso1}
\mathcal{I}: A^{0,q}(X,\Omega^p_X(\log D))\to A^{0,q}(X, E^p),
\end{equation}
and its dual map
\begin{equation}\label{iso2}
 \mathcal{I}^*: A^{n,n-q}(X, (E^p)^*)\to A^{n,n-q}(X,T^p_X(-\log D))\subset A^{n-p,n-q}(X),
\end{equation}
where $T^p_X(-\log D)$ is the \textit{logarithmic tangent sheaf} which is the dual sheaf of $\Omega^p_X(\log D)$ (cf. \cite{Saito}). And we will identify an element of $A^{n,n-q}(X,T^p_X(-\log D))$ with an $(n-p,n-q)$-form by contraction of the $(n,n-q)$-form with the $T^p_X(-\log D)$-valued coefficient, see (\ref{2.3}) for precise definition.

By the construction \eqref{iso2} of $\mathcal{I}^*$, we have
\begin{lemma}\label{lemma1}
For any $\beta\in A^{n,n-p}(X, (E^p)^*)$, one has
$$
\iota^*_{D_i}\mathcal{I}^*(\beta)=0,\ i=1,\cdots,r.	
$$
\end{lemma}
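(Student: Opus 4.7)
The plan is entirely local. Since the claim concerns the pullback to each smooth component $D_i$, I fix a point of $D_i$ and work in a logarithmic coordinate chart $(U;z^1,\ldots,z^n)$ as in \eqref{lcs} with $D\cap U=\{z^1\cdots z^k=0\}$ and, after relabeling, $D_i\cap U=\{z^i=0\}$ for some $1\le i\le k$. By \eqref{1.6}, a local holomorphic frame of $E^p$ consists of the logarithmic forms
\[
e_{I,J}=\frac{dz^I}{z^I}\wedge dz^J,\qquad I\subseteq\{1,\ldots,k\},\ J\subseteq\{k+1,\ldots,n\},\ |I|+|J|=p,
\]
where $dz^I/z^I=\bigwedge_{a\in I}dz^a/z^a$ and $dz^J=\bigwedge_{b\in J}dz^b$. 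The sheaf $T_X(-\log D)$ carries the dual frame $\{z^a\partial/\partial z^a\}_{a\le k}\cup\{\partial/\partial z^b\}_{b>k}$, so $T_X^p(-\log D)$ has the local frame
\[
e^*_{I,J}=z^I\,\partial_I\wedge\partial_J,\qquad z^I=\prod_{a\in I}z^a,\ \partial_I=\bigwedge_{a\in I}\partial/\partial z^a,
\]
which is precisely dual to $\{e_{I,J}\}$. Under this identification, $\mathcal{I}^*$ of \eqref{iso2} is simply the canonical map between these dual frames, followed by the contraction described in (\ref{2.3}).

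Next, I would write $\beta=\sum_{I,J}\beta_{I,J}\otimes e^*_{I,J}$ with $\beta_{I,J}\in A^{n,n-p}(U)$ and compute $\mathcal{I}^*(\beta)$ explicitly. A direct calculation using
\[
(dz^1\wedge\cdots\wedge dz^n)\lrcorner(z^I\,\partial_I\wedge\partial_J)=\pm\,z^I\,dz^{K(I,J)},\qquad K(I,J):=\{1,\ldots,n\}\setminus(I\cup J),
\]
produces an expression of the form $\mathcal{I}^*(\beta)=\sum_{I,J}\pm\, z^I\,b_{I,J}(z)\,dz^{K(I,J)}\wedge\gamma_{I,J}$, where $b_{I,J}$ and $\gamma_{I,J}$ are respectively the scalar coefficient and $(0,n-p)$-factor of $\beta_{I,J}$.

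Finally I restrict each summand to $D_i=\{z^i=0\}$. Either $i\in I$, in which case the prefactor $z^I$ contains $z^i$ and $\iota_{D_i}^*(z^i)=0$; or $i\notin I$, and since $J\subseteq\{k+1,\ldots,n\}$ automatically excludes $i\le k$, one has $i\in K(I,J)$, so $dz^i$ is a factor of $dz^{K(I,J)}$ and $\iota_{D_i}^*(dz^i)=d\,\iota_{D_i}^*(z^i)=0$. Either way the summand vanishes on $U\cap D_i$. Since such charts cover $D_i$, this yields $\iota_{D_i}^*\mathcal{I}^*(\beta)=0$ globally.

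The only real obstacle is bookkeeping with the local frames and signs; conceptually, the lemma records the structural fact that every local generator of $T_X^p(-\log D)$ carries enough copies of the defining functions $z^a$ so that, after contracting with a top holomorphic form, either a function factor of $z^i$ or a form factor of $dz^i$ must survive to annihilate the restriction to $D_i$.
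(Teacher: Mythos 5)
Your proof is correct and follows essentially the same route as the paper's: the paper's own (very terse) argument is precisely that, by the local description of $T_X^p(-\log D)$ dual to the frame \eqref{1.6}, every term of $\mathcal{I}^*(\beta)$ carries either a factor $z^i$ or a factor $dz^i$, both of which pull back to zero on $D_i$. Your version merely makes the frames, the contraction $(dz^1\wedge\cdots\wedge dz^n)\lrcorner(z^I\partial_I\wedge\partial_J)=\pm z^I dz^{K(I,J)}$, and the dichotomy $i\in I$ versus $i\in K(I,J)$ explicit, which is a faithful (and somewhat more careful) expansion of the same idea.
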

\begin{proof}
Without loss of generality, we may assume that $D_i\cap U=\{z^i=0\}$ for some small open set $U\subset X$. By the definition of $\mathcal{I}^*$, $\mathcal{I}^*(\beta)$ contains either $z^i$ or $dz^i$. Since $\iota^*_{D_i}z^i=0$ and $\iota^*_{D_i}(dz^i)=d(\iota^*_{D_i}z^i)=0$, $\iota^*_{D_i}\mathcal{I}^*(\beta)=0$.
\end{proof}

The two mappings $\mathcal{I}$ and $\mathcal{I}^*$ in \eqref{iso1} and \eqref{iso2} are related by:
\begin{lemma}\label{lemma3}
For any $\alpha\in A^{0,q}(X,\Omega^p_X(\log D))$ and $\beta\in A^{n,n-q}(X, (E^p)^*)$, one has	
$$
T_{\alpha}(\mathcal{I}^*(\beta))=\int_X \mathcal{I}(\alpha)(\beta),	
$$
where $\mathcal{I}(\alpha)(\beta)$ is an $(n,n)$-form, obtained by pairing the values of $\mathcal{I}(\alpha)$ with $\beta$.
\end{lemma}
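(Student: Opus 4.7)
The plan is to verify the equality pointwise on a logarithmic coordinate chart and then integrate. Pick a logarithmic coordinate system $(U,\{z^1,\ldots,z^n\})$ as in \eqref{lcs}, and let $\{e_I\}_{|I|=p}$ be the local frame of $\Omega^p_X(\log D)$ built from wedges of the generators $dz^j$ and $dz^i/z^i$ of (\ref{1.6}). Let $\{v_I\}_{|I|=p}$ be the dual local frame of $T^p_X(-\log D)=\wedge^p T_X(-\log D)$, whose entries are the corresponding wedges of $\p_{z^j}$ and $z^i\p_{z^i}$, so that the bundle pairing satisfies $\langle e_I,v_J\rangle=\delta_{IJ}$. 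Write $\alpha=\sum_I \alpha_I\wedge e_I$ with $\alpha_I\in A^{0,q}(U)$ and $\beta=\sum_J \beta_J\otimes v_J$ with $\beta_J\in A^{n,n-q}(U)$. By the very definitions of $\mathcal{I}$ and $\mathcal{I}^*$ in \eqref{iso1} and \eqref{iso2}, we obtain on $U$
$$
\mathcal{I}(\alpha)(\beta)=\sum_{I,J}\langle e_I,v_J\rangle\,\alpha_I\wedge\beta_J=\sum_I \alpha_I\wedge\beta_I,\qquad \mathcal{I}^*(\beta)=\sum_J v_J\lrcorner\beta_J.
$$

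The main step is to establish the pointwise identity
$$
\alpha\wedge\mathcal{I}^*(\beta)=\mathcal{I}(\alpha)(\beta)\quad\text{on }U,
$$
which, by bilinearity, reduces to the multilinear algebra identity
$$
e_I\wedge(v_J\lrcorner\beta_J)=\delta_{IJ}\,\beta_J.
$$
This in turn is a special case of the standard fact that for any form $\gamma$ of bidegree $(n,\ast)$ (i.e.\ of top holomorphic degree), any holomorphic $p$-covector $e$ and any holomorphic $p$-vector $v$,
$$
e\wedge(v\lrcorner\gamma)=\langle e,v\rangle\,\gamma,
$$
which is straightforward to check in coordinates (after verifying the signs carefully when reordering $dz^I\wedge dz^{I^c}$ into $dz^1\wedge\cdots\wedge dz^n$). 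Applying this with $e=e_I$, $v=v_J$, $\gamma=\beta_J$ and summing gives $\alpha\wedge\mathcal{I}^*(\beta)=\sum_I \alpha_I\wedge\beta_I=\mathcal{I}(\alpha)(\beta)$, an identity of $(n,n)$-forms on $U$. Since both sides are intrinsically defined, the local equality patches to a global one on $X$.

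To conclude, observe that $\mathcal{I}(\alpha)(\beta)$ is a smooth $(n,n)$-form on all of $X$ (the bundle pairing is smooth and the log poles of $\alpha$ are cancelled by the $z^i\p_{z^i}$ factors inside $v_J$ upon contraction), so integration is unproblematic. Thus
$$
T_\alpha(\mathcal{I}^*(\beta))=\int_X \alpha\wedge\mathcal{I}^*(\beta)=\int_X \mathcal{I}(\alpha)(\beta),
$$
where the first equality is the very definition of $T_\alpha$ as a current.

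The only point requiring care is the verification of the scalar identity $e\wedge(v\lrcorner\gamma)=\langle e,v\rangle\gamma$ with the correct signs when $e$ and $v$ involve logarithmic terms; but the duality between $\Omega^p_X(\log D)$ and $T^p_X(-\log D)$ is precisely set up so that the $1/z^i$ in $dz^i/z^i$ is eaten by the $z^i$ in $z^i\p_{z^i}$, reducing everything to the classical multilinear algebra computation on $X-D$. No boundary contributions along $D$ appear since the integrand is in fact smooth across $D$ after the pairing.
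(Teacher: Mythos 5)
Your proposal is correct and follows essentially the same route as the paper: both establish the pointwise identity $\alpha\wedge\mathcal{I}^*(\beta)=\mathcal{I}(\alpha)(\beta)$ by a local computation in a logarithmic coordinate frame (the paper via explicit $\epsilon$-symbols, you via the standard identity $e\wedge(v\lrcorner\gamma)=\langle e,v\rangle\gamma$ for top holomorphic degree $\gamma$) and then integrate over $X$, noting that the log poles of $\alpha$ are cancelled by the $z^i\partial_{z^i}$ factors in $\mathcal{I}^*(\beta)$.
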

\begin{proof}
 From (\ref{1.6}), we may assume locally that
$$
\alpha=\alpha_{i_1\cdots i_p\b{j_1}\cdots\b{j_q}} d\b{z}^{j_1}\wedge \cdots\wedge d\b{z}^{j_q}\wedge\frac{dz^{i_1}}{z^{i_1}}\wedge\cdots\wedge \frac{dz^{i_p}}{z^{i_p}}
$$
	and
$$\beta=\beta^{i_1\cdots i_p}_{\b{j_1}\cdots\b{j}_{n-q}}d\b{z}^{j_1}\wedge \cdots\wedge d\b{z}^{j_{n-q}}\wedge dz^1\wedge\cdots\wedge dz^n\otimes (\mathcal{I}^*)^{-1}(z^{i_1}\frac{\p\ }{\p z^{i_1}}\wedge \cdots \wedge z^{i_p}\frac{\p\ }{\p z^{i_p}}),
$$
where $(\mathcal{I}^*)^{-1}$ is used to denote the isomorphism between $T^p_X(-\log D)$ and $(E^p)^*$. By the definition \eqref{iso2} of $\mathcal{I}^*$, one has
\begin{align}\label{2.3}
	\mathcal{I}^*(\beta)=\beta^{i_1\cdots i_p}_{\b{j_1}\cdots\b{j}_{n-q}}z^{i_1}\cdots z^{i_p}i_{\frac{\p\ }{\p z^{i_p}}}\circ \cdots\circ i_{\frac{\p\ }{\p z^{i_1}}}(d\b{z}^{j_1}\wedge \cdots\wedge d\b{z}^{j_{n-q}}\wedge dz^1\wedge\cdots\wedge dz^n).
\end{align}
So
$$
\alpha\wedge \mathcal{I}^*(\beta)=\epsilon^{j_1\cdots j_ql_1\cdots l_{n-q}}_{1\cdots n}\epsilon^{i_1\cdots i_p}_{k_1\cdots k_p}\alpha_{i_1\cdots i_p\b{j_1}\cdots\b{j_q}}\beta^{k_1\cdots k_p}_{\b{l_1}\cdots\b{l}_{n-q}}d\b{z}^1\wedge\cdots\wedge d\b{z}^n\wedge dz^1\wedge\cdots\wedge dz^n,	
$$
where
$$
\epsilon^{i_1\cdots i_p}_{j_1\cdots j_p}=
\begin{cases}
	1, &i_1\cdots i_p\ \text{is an even permutation of }  j_1\cdots j_p,\\
-1, &i_1\cdots i_p \ \text{is an odd permutation of }  j_1\cdots j_p,\\
0, &\text{otherwise}.
\end{cases}	
$$

On the other hand, since
$$
\mathcal{I}(\alpha)=\alpha_{i_1\cdots i_p\b{j_1}\cdots\b{j_q}} d\b{z}^{j_1}\wedge \cdots\wedge d\b{z}^{j_q}\otimes\mathcal{I}\left(\frac{dz^{i_1}}{z^{i_1}}\wedge\cdots\wedge \frac{dz^{i_p}}{z^{i_p}}\right),
$$
$$\label{2.4}
\mathcal{I}(\alpha)(\beta)=\epsilon^{j_1\cdots j_ql_1\cdots l_{n-q}}_{1\cdots n}\epsilon^{i_1\cdots i_p}_{k_1\cdots k_p}\alpha_{i_1\cdots i_p\b{j_1}\cdots\b{j_q}}\beta^{k_1\cdots k_p}_{\b{l_1}\cdots\b{l}_{n-q}}d\b{z}^1\wedge\cdots\wedge d\b{z}^n\wedge dz^1\wedge\cdots\wedge dz^n=\alpha\wedge\mathcal{I}^*(\beta).	
$$
Therefore,
$$
T_{\alpha}(\mathcal{I}^*(\beta))=\int_X\alpha\wedge \mathcal{I}^*(\beta)=\int_X\mathcal{I}(\alpha)(\beta).	
$$
\end{proof}

Before solving the $\b{\p}$-equation (\ref{main equation}), we first give a key lemma.
\begin{lemma}\label{lemma2}
	For any $\alpha\in A^{0,q}(X,\Omega^p_X(\log D))$ with $\b{\p}\p\alpha=0$ and $\beta\in A^{n,n-q}(X, (E^{p+1})^*)$, one has for $l=1,\cdots,r$,
$$
	\sum_{i_1,\cdots, i_l=1}^r T_{\p\text{Res}_{D_{i_1}\cdots D_{i_l}}(\alpha)}(\iota^*_{i_1\cdots i_l}\b{\p}^*_{i_1\cdots i_{l-1}}\mb{G}''_{i_1\cdots i_{l-1}}\circ \cdots\circ \iota^*_{i_1}\b{\p}^*\mb{G}''(\mathcal{I}^*(\beta)))\in \text{Im}(\p\b{\p}\b{\p}^*\mb{G}'')(\mathcal{I}^*(\beta)),
$$
where $$\iota_{i_1\cdots i_l}: D_{i_1}\cap\cdots\cap D_{i_l}\to X$$ is the natural inclusion, and $\mb{G}''_{i_1\cdots i_{l-1}}$, $\b{\p}^*_{i_1\cdots i_{l-1}}$ are the operators on $$D_{i_1}\cap\cdots\cap D_{i_{l-1}}$$ with respect to the induced K\"ahler metric $\iota^*_{i_1\cdots i_{l-1}}\omega$ from the K\"ahler metric $\omega$ on $X$.
\end{lemma}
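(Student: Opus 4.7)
I plan to prove the lemma by induction on $l$, with the engine being an iterated application of the residue identity \eqref{2.6} on the nested compact K\"ahler submanifolds $D_{i_1}\cap\cdots\cap D_{i_k}$ equipped with the metrics induced from $\omega$. The base case is a direct current-theoretic computation; the inductive step applies the same identity one submanifold deeper, absorbs an extra Green operator into the test form, and hands the result back to the induction hypothesis.

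\textbf{A key current identity.} For any logarithmic form $\gamma$ with $\bar\partial\partial\gamma=0$, I first establish that $(\bar\partial\partial T_\gamma)(\eta)=-2\pi\sqrt{-1}\sum_{i=1}^r T_{\partial\,\mathrm{Res}_{D_i}(\gamma)}(\iota^*_i\eta)$ for every smooth test form $\eta$ of the appropriate bidegree on the ambient manifold. The proof combines three ingredients: (i)~a Stokes computation of the type \eqref{2.1}, now paired against $(n-p-1,n-q)$-forms, whose boundary contribution vanishes on degree grounds and yields $\partial T_\gamma = T_{\partial\gamma}$; (ii)~the identity \eqref{2.6} applied to $\partial\gamma$, which simplifies to $\bar\partial T_{\partial\gamma}=2\pi\sqrt{-1}\,\mathrm{Res}(\partial\gamma)$ using $\bar\partial\partial\gamma=0$; and (iii)~the local formula $\mathrm{Res}_{D_i}(\partial\gamma)=-\partial\,\mathrm{Res}_{D_i}(\gamma)$, read off from the decomposition \eqref{expression of alpha} using $\partial(dz^i/z^i)=0$.

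\textbf{Base case and inductive step.} For $l=1$, testing the key identity with $\gamma=\alpha$ and $\eta=\bar\partial^*\mathbb{G}''(\mathcal{I}^*(\beta))$, the adjoint formula \eqref{1.1} gives $(\bar\partial\partial T_\alpha)(\eta)=T_\alpha(\bar\partial\partial\eta)=-T_\alpha(\partial\bar\partial\bar\partial^*\mathbb{G}''(\mathcal{I}^*(\beta)))$, so the sum emerges (up to the universal constant $-\tfrac{1}{2\pi\sqrt{-1}}$) as $T_\alpha$ paired with the element $\partial\bar\partial\bar\partial^*\mathbb{G}''(\mathcal{I}^*(\beta))\in\mathrm{Im}(\partial\bar\partial\bar\partial^*\mathbb{G}'')$. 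For the inductive step, fix $(i_1,\dots,i_{l-1})$: because Poincar\'e residues intertwine with $\partial,\bar\partial,\iota^*$ up to the signs of \eqref{Reszero}, the hypothesis $\bar\partial\partial\alpha=0$ propagates to $\bar\partial\partial\,\mathrm{Res}_{D_{i_1}\cdots D_{i_{l-1}}}(\alpha)=0$ on the compact K\"ahler submanifold $D_{i_1}\cap\cdots\cap D_{i_{l-1}}$. Applying the key identity there to the innermost sum over $i_l$ converts the composition into one carrying an extra factor of $\partial\bar\partial\bar\partial^*_{i_1\cdots i_{l-1}}\mathbb{G}''_{i_1\cdots i_{l-1}}$ acting on the depth-$(l-1)$ test form, while simultaneously reducing the residue depth by one; reassembling via the induction hypothesis yields the claim.

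\textbf{Main difficulty.} The substantive work lies in the bookkeeping: one must track the antisymmetry \eqref{Reszero} through iterated $\partial,\bar\partial,\iota^*$ with the correct signs, confirm that $\bar\partial\partial$ annihilates every iterated residue of $\alpha$ (not merely the first), and verify that on each nested intersection $D_{i_1}\cap\cdots\cap D_{i_k}$ the induced K\"ahler metric makes the operators $\bar\partial^*_{i_1\cdots i_k}$ and $\mathbb{G}''_{i_1\cdots i_k}$ commute with pullbacks $\iota^*$ in precisely the sense required for the telescoping collapse onto the target $\partial\bar\partial\bar\partial^*\mathbb{G}''(\mathcal{I}^*(\beta))$.
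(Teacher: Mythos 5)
Your ``key current identity'' $\b{\p}\p T_\gamma=-2\pi\sqrt{-1}\sum_i T_{\p\text{Res}_{D_i}(\gamma)}\circ\iota^*_{D_i}$ is correct, and it is the same residue mechanism (\eqref{2.1}, \eqref{2.6}, \eqref{2.11}) that drives the paper's proof. The gap is in what you do with it. Writing $B_l$ for the sum in the statement, in the base case you rewrite $B_1$ as a constant times $T_\alpha\bigl(\p\b{\p}\b{\p}^*\mb{G}''(\mathcal{I}^*(\beta))\bigr)$ and declare membership in $\text{Im}(\p\b{\p}\b{\p}^*\mb{G}'')(\mathcal{I}^*(\beta))$ because the \emph{test form} lies in the image of $\p\b{\p}\b{\p}^*\mb{G}''$. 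But the definition of that membership given right after the lemma puts the operators on the \emph{current}: one must produce a current $\tilde T$ whose image under $\p\b{\p}\b{\p}^*\mb{G}''$ gives the value on $\mathcal{I}^*(\beta)$, and the whole point is that such a value vanishes against the $\b{\p}$-closed test forms $\mathcal{I}^*(*'e)$ used in Proposition \ref{prop4}. Transposing your expression yields the current $\mb{G}''\b{\p}^*\b{\p}\p T_\alpha$ (adjunction reverses the operator chain), which is not $\b{\p}$-exact. Concretely, for $\b{\p}$-closed $\mu$ one has $\p\b{\p}\b{\p}^*\mb{G}''\mu=\p\mu-\p\mb{H}\mu=\p\mu$, so your base case collapses to $B_1=\pm\tfrac{1}{2\pi\sqrt{-1}}T_{\p\alpha}(\mu)$, which by \eqref{2.7} and \eqref{2.8} is exactly the tautology $B_1=\pm B_1$: the argument is circular and establishes no exactness.

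The induction also runs the wrong way. The residue identity naturally sends depth $l$ to depth $l+1$ (each $\b{\p}$ applied to a $T_\gamma$ creates one more residue), and the paper's recursion $B_l\equiv c\,B_{l+1}$ terminates because $B_{n+1}=0$. Your step goes from $l$ to $l-1$; what it actually produces is $T_{\text{Res}_{D_{i_1}\cdots D_{i_{l-1}}}(\alpha)}$ (with no $\p$ in front) tested against a form carrying an extra factor $\p\b{\p}\b{\p}^*_{i_1\cdots i_{l-1}}\mb{G}''_{i_1\cdots i_{l-1}}$, which is not the expression covered by your induction hypothesis, and matching the two reintroduces depth-$l$ residues. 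The missing idea is to apply the Hodge decomposition \eqref{1.7} to the current $T_{\p\text{Res}_{D_{i_1}\cdots D_{i_l}}(\alpha)}$ \emph{on each intersection} $D_{i_1}\cap\cdots\cap D_{i_l}$: the $\b{\p}^*\b{\p}\mb{G}''$ half recurses to $B_{l+1}$ via your key identity, while the $\b{\p}\b{\p}^*\mb{G}''$ half is $\b{\p}$-exact on the submanifold and must then be transported to a $\b{\p}$-exact current on $X$ by commuting $\b{\p}$ through the chain $\iota^*\b{\p}^*\mb{G}''\circ\cdots$ until it lands on $\mathcal{I}^*(\beta)$, the boundary terms dying by Lemma \ref{lemma1} at the first level and by the antisymmetry \eqref{Reszero} at deeper levels, and the result being repackaged by trivial extension of currents. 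Your outline never invokes Lemma \ref{lemma1} and contains no such decomposition, so there is no source of exactness anywhere in the argument.
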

Here the notation $\eta\in \text{Im}(\p\b{\p}\b{\p}^*\mb{G}'')(\mathcal{I}^*(\beta))$ means that there exists some $(n-p-2,n-q)$-current $\tilde{T}$ on $X$ such that $\eta(\mathcal{I}^*(\beta))=\p\b{\p}\b{\p}^*\mb{G}''\tilde{T}(\mathcal{I}^*(\beta))$, where $\mathcal{I}^*$ is given by \eqref{iso2}.
\begin{proof}
Set $$A_l=\iota^*_{i_1\cdots i_l}\b{\p}^*_{i_1\cdots i_{l-1}}\mb{G}''_{i_1\cdots i_{l-1}}$$ and
$$
B_l=\sum_{i_1,\cdots, i_l=1}^r T_{\p\text{Res}_{D_{i_1}\cdots D_{i_l}}(\alpha)}(A_l\circ \cdots\circ A_1(\mathcal{I}^*(\beta))).
$$
By (\ref{1.7}), (\ref{2.1}) and $\p\circ\mb{H}=0$, one has
\begin{equation}\label{2.10}
	B_l=\sum_{i_1,\cdots, i_l=1}^r( \b{\p}^*_{i_1\cdots i_l}\b{\p}\mb{G}''_{i_1\cdots i_l}T_{\p\text{Res}_{D_{i_1}\cdots D_{i_l}}(\alpha)}+ \b{\p}\b{\p}^*_{i_1\cdots i_l}\mb{G}''_{i_1\cdots i_l}T_{\p\text{Res}_{D_{i_1}\cdots D_{i_l}}(\alpha)})(A_l\circ \cdots\circ A_1(\mathcal{I}^*(\beta))).
\end{equation}
Then Residue theorem \cite[Theorem 4.1.(ii) of Chapter II]{BHPV} tells us that
\begin{align}\label{2.11}
\p \text{Res}_{D_i}(\alpha)+\text{Res}_{D_i}(\p\alpha)=\b{\p} \text{Res}_{D_i}(\alpha)+\text{Res}_{D_i}(\b{\p}\alpha)=0,
\end{align}
where the expression \eqref{expression of alpha} of the logarithmic form is different from that in the reference with respect to the position of $\frac{dz^1}{z^1}$.

For the first term on the RHS of (\ref{2.10}),  the assumption $\b{\p}\p\alpha=0$, (\ref{2.11}),(\ref{Rescurrent}), (\ref{1.1}) and (\ref{1.2}) imply
\begin{align}\label{2.13}
\begin{split}
	&\quad \sum_{i_1,\cdots, i_l=1}^r( \b{\p}^*_{i_1\cdots i_l}\b{\p}\mb{G}''_{i_1\cdots i_l}T_{\p\text{Res}_{D_{i_1}\cdots D_{i_l}}(\alpha)})(A_l\circ \cdots\circ A_1(\mathcal{I}^*(\beta)))\\
	&=2\pi\sqrt{-1}\sum_{i_1,\cdots, i_l=1}^r(-1)^{p+q-l-1}\p\text{Res}(\text{Res}_{D_{i_1}\cdots D_{i_l}}(\alpha))(\b{\p}^*_{i_1\cdots i_l}\mb{G}''_{i_1\cdots i_l}\circ A_l\circ \cdots\circ A_1(\mathcal{I}^*(\beta)))\\
	&=2\pi\sqrt{-1}(-1)^{p+q-(l+1)}\sum_{i_1,\cdots, i_l, i_{l+1}=1}^r T_{\p\text{Res}_{D_{i_1}\cdots D_{i_{l+1}}}(\alpha)}(A_{l+1}\circ A_l\circ \cdots\circ A_1(\mathcal{I}^*(\beta)))\\
	&=2\pi\sqrt{-1}(-1)^{p+q-(l+1)}B_{l+1}.
	\end{split}	
\end{align}

Now we calculate the second term on the RHS of (\ref{2.10}). A direct consequence of (\ref{1.1}) is
\begin{align*}
   &\quad\sum_{i_1,\cdots, i_l=1}^r(\b{\p}\b{\p}^*_{i_1\cdots i_l}\mb{G}''_{i_1\cdots i_l}T_{\p\text{Res}_{D_{i_1}\cdots D_{i_l}}(\alpha)})(A_l\circ \cdots\circ A_1(\mathcal{I}^*(\beta)))\\
 =&(-1)^{p+q-l-1}\sum_{i_1,\cdots, i_l=1}^r(\b{\p}^*_{i_1\cdots i_l}\mb{G}''_{i_1\cdots i_l}T_{\p\text{Res}_{D_{i_1}\cdots D_{i_l}}(\alpha)})(\b{\p}A_l\circ \cdots\circ A_1(\mathcal{I}^*(\beta))).
\end{align*}
Then (\ref{1.7}), (\ref{2.1}), $\p\circ \mb{H}=0$ and also the K\"ahler identity induced from that on the smooth differential forms of K\"ahler manifolds imply

\begin{align*}
&(-1)^{p+q-l-1}\sum_{i_1,\cdots, i_l=1}^r(\b{\p}^*_{i_1\cdots i_l}\mb{G}''_{i_1\cdots i_l}T_{\p\text{Res}_{D_{i_1}\cdots D_{i_l}}(\alpha)})(\b{\p}A_l\circ \cdots\circ A_1(\mathcal{I}^*(\beta)))\\
	=&(-1)^{p+q-l-1}\sum_{i_1,\cdots, i_l=1}^r(\b{\p}^*_{i_1\cdots i_l}\mb{G}''_{i_1\cdots i_l}T_{\p\text{Res}_{D_{i_1}\cdots D_{i_l}}(\alpha)})(-A_l\circ\b{\p} A_{l-1}\circ \cdots\circ A_1(\mathcal{I}^*(\beta))\\
&\quad\quad\quad\quad\quad\quad\quad\quad\quad\quad\quad\quad\quad\quad\quad\quad\quad\quad\quad\quad+\iota^*_{i_1\cdots i_l}\circ A_{l-1}\circ \cdots\circ A_1(\mathcal{I}^*(\beta))).
\end{align*}
Since $i_{l-1}$ and $i_l$ anti-commute for $l\geq 2$ by (\ref{Reszero})  and Lemma \ref{lemma1} holds for $l=1$, one gets
\begin{align*}
&(-1)^{p+q-l-1}\sum_{i_1,\cdots, i_l=1}^r(\b{\p}^*_{i_1\cdots i_l}\mb{G}''_{i_1\cdots i_l}T_{\p\text{Res}_{D_{i_1}\cdots D_{i_l}}(\alpha)})(-A_l\circ\b{\p} A_{l-1}\circ \cdots\circ A_1(\mathcal{I}^*(\beta))\\
	&\quad\quad\quad\quad\quad\quad\quad\quad\quad\quad\quad\quad\quad\quad\quad\quad\quad\quad\quad\quad+\iota^*_{i_1\cdots i_l}\circ A_{l-1}\circ \cdots\circ A_1(\mathcal{I}^*(\beta)))\\
=&(-1)^{p+q-l}\sum_{i_1,\cdots, i_l=1}^r(\b{\p}^*_{i_1\cdots i_l}\mb{G}''_{i_1\cdots i_l}T_{\p\text{Res}_{D_{i_1}\cdots D_{i_l}}(\alpha)})(A_l\circ\b{\p}A_{l-1}\circ \cdots\circ A_1(\mathcal{I}^*(\beta))).
\end{align*}
This step is much inspired by Noguchi's trick on \cite[Page 298]{Nog}.
Repeat the three equalities above to obtain that
\begin{align*}
  &(-1)^{p+q-l}\sum_{i_1,\cdots, i_l=1}^r(\b{\p}^*_{i_1\cdots i_l}\mb{G}''_{i_1\cdots i_l}T_{\p\text{Res}_{D_{i_1}\cdots D_{i_l}}(\alpha)})(A_l\circ\b{\p}A_{l-1}\circ \cdots\circ A_1(\mathcal{I}^*(\beta)))\\
=&(-1)^{p+q-1}\sum_{i_1,\cdots, i_l=1}^r(\b{\p}^*_{i_1\cdots i_l}\mb{G}''_{i_1\cdots i_l}T_{\p\text{Res}_{D_{i_1}\cdots D_{i_l}}(\alpha)})(A_l\circ A_{l-1}\circ \cdots\circ A_1(\b{\p}\mathcal{I}^*(\beta))).
\end{align*}
In summary, the second term on the RHS of (\ref{2.10}) is
\begin{align}\label{2.12}
\begin{split}
	&\quad\sum_{i_1,\cdots, i_l=1}^r(\b{\p}\b{\p}^*_{i_1\cdots i_l}\mb{G}''_{i_1\cdots i_l}T_{\p\text{Res}_{D_{i_1}\cdots D_{i_l}}(\alpha)})(A_l\circ \cdots\circ A_1(\mathcal{I}^*(\beta)))\\
	=&(-1)^{p+q-1}\sum_{i_1,\cdots, i_l=1}^r(\b{\p}^*_{i_1\cdots i_l}\mb{G}''_{i_1\cdots i_l}T_{\p\text{Res}_{D_{i_1}\cdots D_{i_l}}(\alpha)})(A_l\circ A_{l-1}\circ \cdots\circ A_1(\b{\p}\mathcal{I}^*(\beta))).
\end{split}	
\end{align}

Let $$\chi_{i_1\cdots i_j}(T),\ j=0,\cdots,l,$$ be the current on $D_{i_1}\cap\cdots\cap D_{i_j}$ by trivial extension for any current $T$ on submanifold $S$ of  $D_{i_1}\cap\cdots\cap D_{i_j}$. More precisely, $$\chi_{i_1\cdots i_j}(T)(\beta)=T(\iota^*_{S}(\beta))$$ for any smooth form $\beta$ on $D_{i_1}\cap\cdots\cap D_{i_j}$. So the equation (\ref{2.12}) can be  reduced to
\begin{align}\label{2.14}
\begin{split}
&\quad\sum_{i_1,\cdots, i_l=1}^r(\b{\p}\b{\p}^*_{i_1\cdots i_l}\mb{G}''_{i_1\cdots i_l}T_{\p\text{Res}_{D_{i_1}\cdots D_{i_l}}(\alpha)})(A_l\circ \cdots\circ A_1(\mathcal{I}^*(\beta)))\\
&=\b{\p}\b{\p}^*\mb{G}''\Big((-1)^{\frac{l(l+1)}{2}}	\sum_{i_1,\cdots, i_l=1}^r(\chi_X\circ\b{\p}^*_{i_1}\mb{G}''_{i_1}\chi_{i_1}\circ\cdots\circ\b{\p}^*_{i_1\cdots i_{l-1}}\mb{G}''_{i_1\cdots i_{l-1}}\chi_{i_1\cdots i_{l-1}}\\
&\quad \quad\quad\quad\quad\quad \quad\quad\quad\quad \quad\quad\quad\circ\b{\p}^*_{i_1\cdots i_l}\mb{G}''_{i_1\cdots i_l}T_{\p\text{Res}_{D_{i_1}\cdots D_{i_l}}(\alpha)})\Big)(\mathcal{I}^*(\beta))\\
&\in \text{Im}(\p\b{\p}\b{\p}^*\mb{G}'')(\mathcal{I}^*(\beta)).
\end{split}
\end{align}

Substituting (\ref{2.13}) and (\ref{2.14}) into (\ref{2.10}), we have
$$
B_l\equiv 2\pi\sqrt{-1}(-1)^{p+q-(l+1)}B_{l+1},\quad \text{mod}\,\,\, \text{Im}(\p\b{\p}\b{\p}^*\mb{G}'')(\mathcal{I}^*(\beta)).	
$$
Then iteration and that $B_{n+1}=0$ imply
$$
B_l\in \text{Im}(\p\b{\p}\b{\p}^*\mb{G}'')(\mathcal{I}^*(\beta))
$$
for any $l\geq 1$.
\end{proof}

By considering the logarithmic $(p,q)$-forms as $E^p$-valued $(0,q)$-forms and using the bundle-valued Hodge Theorem (\ref{2.17}), one can solve a  $\b{\p}$-equation for logarithmic forms under an additional $\b\p$-exactness condition.
\begin{prop}\label{prop4}
Suppose that $\alpha\in A^{0,q}(X,\Omega^p_X(\log D))$ with $\b{\p}\alpha=0$ and $T_{\alpha}(\mathcal{I}^*(\beta))\in \text{Im}(\b{\p})(\mathcal{I}^*(\beta))$ for any $\beta\in A^{n,n-q}(X,(E^p)^*)$. Then there exists a $\gamma\in A^{0,q-1}(X,\Omega^p_X(\log D))$ such that
$$
\b{\p}\gamma=\alpha.	
$$
\end{prop}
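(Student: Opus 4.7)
The plan is to transfer the problem to the bundle-valued setting using the isomorphism $\mathcal{I}$ of \eqref{iso1}, solve the resulting equation by the standard Hodge decomposition for $E^p$-valued forms on the compact K\"ahler manifold $X$, and use the hypothesis together with Serre duality to kill the harmonic obstruction.

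First I would verify that $\mathcal{I}$ intertwines the $\bar{\partial}$-operator on $A^{0,q}(X,\Omega^p_X(\log D))$ with the operator $\bar{\partial}_{E^p}$ of the holomorphic bundle $E^p$. This is a local computation: in the frame coming from \eqref{1.6}, the wedges $\frac{dz^{i_1}}{z^{i_1}}\wedge\cdots\wedge\frac{dz^{i_p}}{z^{i_p}}$ are holomorphic sections of $E^p$, so $\bar{\partial}$ acts only on the smooth coefficients of $\alpha$, which is precisely $\bar{\partial}_{E^p}\mathcal{I}(\alpha)$. The same local inspection of \eqref{2.3} shows that $\mathcal{I}^*$ also intertwines $\bar{\partial}_{(E^p)^*}$ with $\bar{\partial}$, since the coefficients $z^{i_1}\cdots z^{i_p}$ are holomorphic. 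Hence $\mathcal{I}(\alpha)\in A^{0,q}(X,E^p)$ is $\bar{\partial}_{E^p}$-closed, and the bundle-valued Hodge decomposition yields
\[
\mathcal{I}(\alpha)=\mathbb{H}_{E^p}\mathcal{I}(\alpha)+\bar{\partial}_{E^p}\bar{\partial}_{E^p}^{*}\mathbb{G}_{E^p}\mathcal{I}(\alpha),
\]
where $\mathbb{H}_{E^p}$ and $\mathbb{G}_{E^p}$ denote the harmonic projection and Green operator acting on $A^{0,\bullet}(X,E^p)$.

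The core of the proof is to show $\mathbb{H}_{E^p}\mathcal{I}(\alpha)=0$. By non-degeneracy of the Serre duality pairing
\[
H^{0,q}(X,E^p)\times H^{n,n-q}(X,(E^p)^{*})\longrightarrow\mathbb{C},\qquad ([\eta],[\beta])\longmapsto\int_X\eta(\beta),
\]
it suffices to check that $\int_X\mathcal{I}(\alpha)(\beta)=0$ for every harmonic $\beta\in A^{n,n-q}(X,(E^p)^{*})$. By Lemma \ref{lemma3}, this integral equals $T_\alpha(\mathcal{I}^*(\beta))$. Since $\beta$ is harmonic, it is $\bar{\partial}_{(E^p)^*}$-closed, and by the intertwining established above $\mathcal{I}^*(\beta)$ is a $\bar{\partial}$-closed smooth $(n-p,n-q)$-form on $X$. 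The hypothesis then provides a current $\tilde{T}$ with $T_\alpha(\mathcal{I}^*(\beta))=(\bar{\partial}\tilde{T})(\mathcal{I}^*(\beta))$, and the definition \eqref{1.1} of $\bar{\partial}$ on currents gives
\[
T_\alpha(\mathcal{I}^*(\beta))=\pm\tilde{T}(\bar{\partial}\mathcal{I}^*(\beta))=0.
\]

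Finally, with the harmonic part killed, $\mathcal{I}(\alpha)=\bar{\partial}_{E^p}u$ for $u:=\bar{\partial}_{E^p}^{*}\mathbb{G}_{E^p}\mathcal{I}(\alpha)$, and elliptic regularity of $\mathbb{G}_{E^p}$ ensures $u\in A^{0,q-1}(X,E^p)$. Setting $\gamma:=\mathcal{I}^{-1}(u)\in A^{0,q-1}(X,\Omega^p_X(\log D))$ and using the intertwining once more yields $\bar{\partial}\gamma=\alpha$. The one technical point requiring care is the commutation of $\mathcal{I}$ and $\mathcal{I}^*$ with the relevant $\bar{\partial}$-operators; once that is secured, the remaining argument is a clean application of bundle-valued Hodge theory and Serre duality on the compact K\"ahler manifold $X$.
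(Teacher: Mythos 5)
Your proposal is correct and follows essentially the same route as the paper: transfer to $E^p$-valued forms via $\mathcal{I}$, apply the bundle-valued Hodge decomposition, and kill the harmonic part by pairing against harmonic dual forms using Lemma \ref{lemma3} together with the exactness hypothesis and the definition \eqref{1.1} of $\b{\p}$ on currents. The only cosmetic difference is that you test directly against harmonic $\beta\in A^{n,n-q}(X,(E^p)^*)$ and use $\b{\p}\beta=0$, whereas the paper writes the test forms as $*'e$ for harmonic $e\in A^{0,q}(X,E^p)$ and verifies $\b{\p}\mathcal{I}^*(*'e)=0$ by the computation \eqref{2.19}; these are equivalent since $*'$ preserves harmonicity.
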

\begin{proof}
Given an Hermitian metric $h$ on the vector bundle $E:=E^p$, let $\n=\n'+\b{\p}$ be the Chern connection of $(E,h)$. Then one has the  bundle-valued Hodge Theorem:
\begin{align}\label{2.17}
\mb{I}=\mb{H}''_E+\b{\p} \b{\p}_E^*\mb{G}''_E+ \b{\p}_E^*\b{\p}\mb{G}''_E,
\end{align}
 where $\mb{G}''_E$, $\mb{H}''_E$ and $ \b{\p}_E^*$ are Green's operator, harmonic projection operator and adjoint operator of $\b{\p}$ with respect to $h$ and $\omega$, respectively.

Applying (\ref{2.17}) to $\mathcal{I}(\alpha)\in A^{0,q}(X,E)$, one has
\begin{align}\label{2.20}
\begin{split}
	\mathcal{I}(\alpha)&=\mb{H}''_E(\mathcal{I}(\alpha))+\b{\p} \b{\p}_E^*\mb{G}''_E(\mathcal{I}(\alpha))+ \b{\p}_E^*\b{\p}\mb{G}''_E(\mathcal{I}(\alpha))\\
	&=\mb{H}''_E(\mathcal{I}(\alpha))+\b{\p} \b{\p}_E^*\mb{G}''_E(\mathcal{I}(\alpha)),
	\end{split}
\end{align}
where the last equality follows from $\b{\p}\circ \mathcal{I}=\mathcal{I}\circ \b{\p}$ and $\b{\p}\alpha=0$.

Let $e\in A^{0,q}(X,E)$ be a harmonic element, i.e., $\b{\p}e= \b{\p}_E^*e=0$.
Let $$*': A^{p,q}(X, E)\to A^{n-p,n-q}(X, E^*)$$ be the star operator, defined by
\begin{equation}\label{hodgestar}
 \eta(*'\theta)=\langle\eta,\theta\rangle\frac{\omega^n}{n!}	
\end{equation}
for any $\eta,\theta\in A^{p,q}(X, E)$, where $\langle\cdot,\cdot\rangle$ is the pointwise inner product induced by $(X,\omega)$ and $(E,h)$. So
$$
(\mathcal{I}(\alpha),e)=\int_X\langle\mathcal{I}(\alpha),e\rangle\frac{\omega^n}{n!}=\int_X \mathcal{I}(\alpha)(*'e)=T_{\alpha}(\mathcal{I}^* (*'e)),
$$
where the last equality follows from Lemma \ref{lemma3}. By the exactness of $T_{\alpha}$, there exists a current $T$ of bidegree $(p,q-1)$ on $X$ such that
$$T_{\alpha}(\mathcal{I}^* (*'e))=(\b{\p}T)(\mathcal{I}^* (*'e)).$$ So
\begin{align}\label{2.18}
	(\mathcal{I}(\alpha),e)=(-1)^{p+q}T(\b{\p}\mathcal{I}^* (*'e)).
\end{align}
Note that
\begin{align}\label{2.19}
\begin{split}
	\b{\p}\mathcal{I}^* (*'e)&=(-1)^{p}\mathcal{I}^*(\b{\p}*'e)\\
	&=(-1)^{p}\mathcal{I}^*((-1)^{q+1}*'*'\b{\p}*'e)\\
	&=(-1)^{p+q}\mathcal{I}^*(*' \b{\p}_E^*e)\\
    &=0,
\end{split}	
\end{align}
where the first equality follows from $\mathcal{I}^*\circ\b{\p}=(-1)^{p}\b{\p}\circ\mathcal{I}^*$ on $A^{n,n-q}(X,E^*)$. By (\ref{2.18}) and (\ref{2.19}), one has $\mb{H}''_E(\mathcal{I}(\alpha))=0$. Substituting it into
(\ref{2.20}), we have
\begin{align}\label{2.21}
\mathcal{I}(\alpha)=\b{\p} \b{\p}_E^*\mb{G}''_E(\mathcal{I}(\alpha)).
\end{align}
Since $\mathcal{I}: A^{0,q}(X,\Omega^p_X(\log D))\to A^{0,q}(X, E)$ is an isomorphism,
\begin{align}\label{gamma}
\gamma:=\mathcal{I}^{-1}\left( \b{\p}_E^*\mb{G}''_E(\mathcal{I}(\alpha))\right)\in A^{0,q-1}(X,\Omega^p_X(\log D)).
\end{align}
Applying $\mathcal{I}^{-1}$ to both sides of (\ref{2.21}) and using $\b{\p}\circ\mathcal{I}^{-1}=\mathcal{I}^{-1}\circ\b{\p}$, we have
$$
 \b{\p}\gamma=\alpha.	
$$
\end{proof}

Using Lemma \ref{lemma2} and Proposition \ref{prop4}, we get the first main theorem.
\begin{thm}\label{thm2}
For any $\alpha\in A^{0,q}(X,\Omega^p_X(\log D))$ with $\b{\p}\p\alpha=0$, there exists a solution $x\in A^{0,q-1}(X,\Omega^{p+1}_X(\log D))$ such that
\begin{align}\label{2.5}
\b{\p}x=\p\alpha.
\end{align}
\end{thm}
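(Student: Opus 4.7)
The plan is to apply Proposition~\ref{prop4} to the $(p{+}1,q)$-form $\p\alpha$, which lies in $A^{0,q}(X,\Omega^{p+1}_X(\log D))$ because the local expression (\ref{1.6}) shows that $\Omega^{\bullet}_X(\log D)$ is stable under $\p$. The $\b{\p}$-closedness hypothesis of Proposition~\ref{prop4} is given for free by $\b{\p}\p\alpha=0$; the substantive task is to verify the second hypothesis, namely that
$$T_{\p\alpha}(\mathcal{I}^*(\beta))\in \text{Im}(\b{\p})(\mathcal{I}^*(\beta))$$
for every test form $\beta\in A^{n,n-q}(X,(E^{p+1})^*)$.

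To check this, I would apply the Hodge-type identity (\ref{1.7}) to the current $T_{\p\alpha}$:
$$T_{\p\alpha}=\mb{H}T_{\p\alpha}+\b{\p}\b{\p}^*\mb{G}''T_{\p\alpha}+\b{\p}^*\b{\p}\mb{G}''T_{\p\alpha}.$$
The harmonic term vanishes: by (\ref{2.1}) one has $T_{\p\alpha}=\p T_{\alpha}$, and since harmonic forms on a compact K\"ahler manifold are $\p$-closed, the relation $(\mb{H}T)(\psi)=T(\mb{H}\psi)$ from (\ref{1.4}) together with $(\p T)(\psi)=\pm T(\p\psi)$ force $\mb{H}\p=0$ on currents, whence $\mb{H}T_{\p\alpha}=0$. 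The middle summand is manifestly in the image of $\b{\p}$. For the last summand, formula (\ref{2.6}) applied to $\p\alpha$, together with $\b{\p}\p\alpha=0$, yields $\b{\p}T_{\p\alpha}=2\pi\sqrt{-1}\,\text{Res}(\p\alpha)$, so the task collapses to showing that $\b{\p}^*\mb{G}''\text{Res}(\p\alpha)$ is $\b{\p}$-exact when paired with $\mathcal{I}^*(\beta)$.

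This last step is exactly what Lemma~\ref{lemma2} delivers. Using (\ref{Rescurrent}), the residue identity $\text{Res}_{D_i}(\p\alpha)=-\p\,\text{Res}_{D_i}(\alpha)$ from (\ref{2.11}), the self-adjointness of $\mb{G}''$, and the duality (\ref{1.2}) for $\b{\p}^*$, together with the commutation $\mb{G}''\b{\p}^*=\b{\p}^*\mb{G}''$ on smooth forms, the pairing $(\b{\p}^*\mb{G}''\text{Res}(\p\alpha))(\mathcal{I}^*(\beta))$ reduces, up to a sign, to
$$\sum_{i=1}^{r}T_{\p\,\text{Res}_{D_i}(\alpha)}\bigl(\iota^*_{D_i}\b{\p}^*\mb{G}''(\mathcal{I}^*(\beta))\bigr),$$
which is precisely the $l=1$ case of Lemma~\ref{lemma2} and therefore lies in $\text{Im}(\p\b{\p}\b{\p}^*\mb{G}'')(\mathcal{I}^*(\beta))\subseteq \text{Im}(\b{\p})(\mathcal{I}^*(\beta))$. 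Summing the three contributions verifies the hypothesis of Proposition~\ref{prop4}, which then produces the desired $x\in A^{0,q-1}(X,\Omega^{p+1}_X(\log D))$ satisfying $\b{\p}x=\p\alpha$. The main obstacle I anticipate is this residue term itself: as a current it is supported on $D$ and is not visibly $\b{\p}$-exact in any na\"ive splitting. It is only after restricting to test forms of the distinguished shape $\mathcal{I}^*(\beta)$, which by Lemma~\ref{lemma1} pull back to zero on each $D_i$, that the iterative residue-cancellation built into Lemma~\ref{lemma2} kicks in and converts these boundary contributions into a genuinely $\b{\p}$-exact smooth object.
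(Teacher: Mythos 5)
Your proposal is correct and follows essentially the same route as the paper: the Hodge identity (\ref{1.7}) applied to $T_{\p\alpha}$, vanishing of the harmonic part via $T_{\p\alpha}=\p T_{\alpha}$, conversion of the remaining term into the residue current via (\ref{2.6}) and (\ref{2.11}), and the reduction of that term to the $l=1$ case of Lemma~\ref{lemma2} before invoking Proposition~\ref{prop4} — this is exactly the paper's decomposition (\ref{2.7}) and computation (\ref{2.8}). No gaps.
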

\begin{proof}
From (\ref{1.7}), (\ref{2.1}) and (\ref{2.6}), it follows that
\begin{align}\label{2.7}
\begin{split}
T_{\p\alpha}&=\b{\p}\b{\p}^*\mb{G}''T_{\p\alpha}-2\pi\sqrt{-1}\b{\p}^*\mb{G}''\p\text{Res}(\alpha).
\end{split}
\end{align}

By (\ref{2.2}), for any $\beta\in A^{n,n-q}(X, (E^{p+1})^*)$, one has
\begin{align}\label{2.8}
\begin{split}
	\b{\p}^*\mb{G}''\p\text{Res}(\alpha)(\mathcal{I}^*(\beta))&=\text{Res}(\alpha)(\p\mb{G}''\b{\p}^*\mathcal{I}^*(\beta))\\
	&=\sum_{i=1}^r\int_{D_i}\text{Res}_{D_i}(\alpha)\wedge \iota_{D_i}^*(\p\mb{G}''\b{\p}^*\mathcal{I}^*(\beta))\\
	&=\sum_{i=1}^r\int_{D_i}(-1)^{p+q}\p\text{Res}_{D_i}(\alpha)\wedge \iota_{D_i}^*(\mb{G}''\b{\p}^*\mathcal{I}^*(\beta))\\
	&=(-1)^{p+q}\sum_{i=1}^r T_{\p\text{Res}_{D_i}(\alpha)}(\iota_{D_i}^*\b{\p}^*\mb{G}''\mathcal{I}^*(\beta)).
	\end{split}
\end{align}
By Lemma \ref{lemma2}, one gets
$$
	\b{\p}^*\mb{G}''\p\text{Res}(\alpha)(\mathcal{I}^*(\beta))\in \text{Im}(\p\b{\p}\b{\p}^*\mb{G}'')(\mathcal{I}^*(\beta)).
$$
Therefore,
$$
T_{\p\alpha}(\mathcal{I}^*(\beta))=(\b{\p}\b{\p}^*\mb{G}''T_{\p\alpha}-2\pi\sqrt{-1}\b{\p}^*\mb{G}''\p\text{Res}(\alpha))(\mathcal{I}^*(\beta))\in \text{Im}(\p\b{\p}\b{\p}^*\mb{G}'')(\mathcal{I}^*(\beta)).
$$
By Proposition \ref{prop4}, there exists a solution in $A^{0,q-1}(X,\Omega^{p+1}_X(\log D))$ for the equation (\ref{2.5}).
\end{proof}

Using an analogous argument for Theorem \ref{thm2}, one can solve another $\b{\p}$-equation:
\begin{thm}\label{sol-CY}
If $\alpha\in A^{n,n-q}(X, T_X^p(-\log D))\subset A^{n-p,n-q}(X)$ with $\b{\p}\p\alpha=0$, then there is a solution $x\in A^{n,n-q-1}(X, T_X^{p-1}(-\log D))$ such that
$$
\b{\p}x=\p\alpha.	
$$
\end{thm}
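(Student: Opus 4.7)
The plan is to mirror the proof of Theorem \ref{thm2}, exploiting a key simplification: since $\alpha$ is a smooth section of the smooth bundle $T^p_X(-\log D)$, its image in $A^{n-p,n-q}(X)$ under the contraction is smooth on all of $X$ and no residue term arises in the analog of \eqref{2.6}. The main novelty is that the primitive $x$ must lie in the constrained space $A^{n,n-q-1}(X, T^{p-1}_X(-\log D))$, which forces the use of the bundle-valued Hodge theory for $(E^{p-1})^*$ in place of the full exterior bundle.

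First, using the explicit formula \eqref{2.3} and applying Leibniz to the factor $z^{i_1}\cdots z^{i_p}$ appearing in the contraction, a direct local computation shows $\partial\alpha\in A^{n,n-q}(X, T^{p-1}_X(-\log D))$. Via the isomorphism \eqref{iso2}, define $\tilde\beta\in A^{n,n-q}(X,(E^{p-1})^*)$ by $\mathcal{I}^*(\tilde\beta)=\partial\alpha$; the intertwining $\mathcal{I}^*\bar\partial=(-1)^{p-1}\bar\partial\mathcal{I}^*$ of \eqref{2.19}, together with injectivity of $\mathcal{I}^*$ and the hypothesis $\bar\partial\partial\alpha=0$, translates into $\bar\partial\tilde\beta=0$. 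Next, since $\alpha$ is smooth on $X$, $\partial T_\alpha=T_{\partial\alpha}$ and $\bar\partial T_{\partial\alpha}=T_{\bar\partial\partial\alpha}=0$, so the current Hodge decomposition \eqref{1.7} reduces to $T_{\partial\alpha}=\mathbb{H}''T_{\partial\alpha}+\bar\partial\bar\partial^*\mathbb{G}''T_{\partial\alpha}$. For the harmonic piece, any smooth harmonic test form $\phi$ satisfies $\partial\phi=0$ and $\alpha\wedge\phi$ is smooth, so Stokes on the compact boundaryless $X$ gives $\int_X d(\alpha\wedge\phi)=0$ and hence $T_{\partial\alpha}(\phi)=\int_X\partial\alpha\wedge\phi=0$. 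Therefore $T_{\partial\alpha}=\bar\partial S$ as scalar currents with $S:=\bar\partial^*\mathbb{G}''T_{\partial\alpha}$.

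The main obstacle is to upgrade this scalar exactness to a bundle-valued one on $(E^{p-1})^*$. Equip $(E^{p-1})^*$ with a Hermitian metric and apply \eqref{2.17} to $\tilde\beta$; the $\bar\partial^*\bar\partial\mathbb{G}''$-term drops out by $\bar\partial\tilde\beta=0$, so it remains to show $\mathbb{H}''_{(E^{p-1})^*}\tilde\beta=0$. For any harmonic $e\in A^{n,n-q}(X,(E^{p-1})^*)$, set $\eta:=\mathcal{I}^{-1}(*'e)\in A^{0,q}(X,\Omega^{p-1}_X(\log D))$; by Lemma \ref{lemma3} (with $\alpha,\beta$ swapped) and the definition \eqref{hodgestar}, $(\tilde\beta,e)=\int_X\partial\alpha\wedge\eta=T_{\partial\alpha}(\eta)$. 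Testing the scalar identity $T_{\partial\alpha}=\bar\partial S$ against the logarithmic $\eta$ via cutoff approximations $\chi_\epsilon\eta$ vanishing near $D$ gives $T_{\partial\alpha}(\eta)=\pm S(\bar\partial\eta)$: the boundary/residue contributions vanish as $\epsilon\to 0$ because $\iota^*_{D_i}\alpha=0$ (Lemma \ref{lemma1}) forces the coefficient of $dz^i/z^i$ in $\alpha\wedge\eta$ to be $z^i$-divisible and so to restrict to zero on $D_i$. Since harmonicity of $e$ on $(E^{p-1})^*$ corresponds via $*'$ and $\mathcal{I}$-intertwining to $\bar\partial\eta=0$, we conclude $(\tilde\beta,e)=0$. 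Setting $\tilde x:=\bar\partial^*_{(E^{p-1})^*}\mathbb{G}''_{(E^{p-1})^*}\tilde\beta\in A^{n,n-q-1}(X,(E^{p-1})^*)$ and $x:=\mathcal{I}^*(\tilde x)\in A^{n,n-q-1}(X, T^{p-1}_X(-\log D))$, and absorbing the sign $(-1)^{p-1}$ into $x$, yields $\bar\partial x=\partial\alpha$. The hardest step is the cutoff-regularization argument that extends the scalar current identity to the $L^1_{\mathrm{loc}}$ test form $\eta$ while confirming that residue contributions really vanish; this rests throughout on Lemma \ref{lemma1}.
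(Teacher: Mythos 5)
Your architecture is the right one and largely parallels the paper's: reduce to the bundle-valued Hodge decomposition on $(E^{p-1})^*$, and kill the harmonic projection of $(\mathcal{I}^*)^{-1}(\p\alpha)$ by pairing with a harmonic $e$ and transporting the computation to the logarithmic form $\eta=\mathcal{I}^{-1}(*'e)$. The preliminary steps are all correct: $\p\alpha\in A^{n,n-q}(X,T^{p-1}_X(-\log D))$ by the Leibniz computation, $\p T_\alpha=T_{\p\alpha}$ with no residue because $\alpha$ is smooth, $\mb{H}''T_{\p\alpha}=0$ by Stokes, and $\b{\p}\eta=0$ from $\b{\p}^*_Ee=0$. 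The gap is exactly in the step you flag as hardest. When you test $T_{\p\alpha}=\b{\p}S$ against $\chi_\epsilon\eta$ and let $\epsilon\to 0$, the boundary contribution is
$$\lim_{\epsilon\to 0}S(\b{\p}\chi_\epsilon\wedge\eta)=\pm 2\pi\sqrt{-1}\sum_{i}\int_{D_i}\iota^*_{D_i}(\sigma)\wedge \text{Res}_{D_i}(\eta),$$
where $\sigma=\b{\p}^*\mb{G}''(\p\alpha)$ is the smooth form representing $S$. Your justification for its vanishing invokes $\iota^*_{D_i}\alpha=0$ and the residue of $\alpha\wedge\eta$, but $\alpha$ does not occur in this boundary term: the form sitting next to $\eta$ is $\sigma$, an output of the Green's operator, which has no vanishing property along $D$, while $\text{Res}_{D_i}(\eta)$ is generically nonzero. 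Lemma \ref{lemma1} controls forms in the image of $\mathcal{I}^*$ (such as $\alpha$ itself), not $\b{\p}^*\mb{G}''(\p\alpha)$, so the cited reason does not apply and the term does not drop out.

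This surviving residue term is precisely the crux of the theorem, and it is what Lemma \ref{lemma2} is for. In the paper's proof one writes $\sigma=\p\beta$ with $\beta=-\b{\p}^*\mb{G}''\alpha$ (Kähler identities), identifies the boundary term via \eqref{2.6} with $\pm 2\pi\sqrt{-1}\,\text{Res}(\eta)(\p\b{\p}^*\mb{G}''\alpha)=\pm 2\pi\sqrt{-1}\,\p\b{\p}^*\mb{G}''\text{Res}(\eta)(\alpha)$, and then runs the iterated-residue argument of Lemma \ref{lemma2} over the strata $D_{i_1}\cap\cdots\cap D_{i_l}$ to show this expression equals $\int_X \p\b{\p}C\wedge\alpha=\int_X C\wedge\b{\p}\p\alpha=0$ for some current $C$. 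So the correct fix is not a cutoff estimate based on Lemma \ref{lemma1}, but the application of Lemma \ref{lemma2} to the logarithmic form $\eta$; without it (or an equivalent device) your argument does not close.
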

\begin{proof}
Let $(E^p)^*$ be the hermitian holomorphic vector bundle associated with the locally free sheaf $T_X^p(-\log D)$, and
$$\xymatrix{
&A^{n-p,n-q}(X)&\\
(\mathcal{I}^*)^{-1}:&A^{n,n-q}(X, T_X^p(-\log D))\ar@{^{(}->}[u]\ar[r]&A^{n,n-q}(X, (E^p)^*)
}$$
the canonical isomorphism with $\mathcal{I}^*$ given by \eqref{iso2}. Notice that
$$
\p\alpha\in A^{n,n-q}(X, T^{p-1}_X(-\log D))\subset A^{n-p+1,n-q}(X).	
$$
In fact, for any $z\in U\subset X$, we may assume that $D\cap U=\{z^1\cdots z^k=0\} $ around $z$ and locally,
$$
\alpha=f_{i_{1}\cdots i_{p}}(z^{i_{1}})^{\sigma(i_{1})}\cdots (z^{i_{p}})^{\sigma(i_{p})}i_{\frac{\p\ }{\p z^{i_{1}}}}\circ\cdots\circ i_{\frac{\p\ }{\p z^{i_p}}}(dz^1\wedge \cdots\wedge dz^n),
$$
where $f_{i_{1}\cdots i_{p}}$ is a locally defined smooth $(0,n-q)$-form and the function $\sigma(\cdot)$ is defined by
\[ \sigma(i_j)=\begin{cases}
1,\ &i_j\in \{1,\cdots, k\}, \\[4pt]
0,\ &\text{otherwise}.
\end{cases} \]
Thus,
\begin{align*}
\p\alpha&=\sum_{l=1}^{p}\frac{\p (f_{i_{1}\cdots i_{p}}(z^{i_{1}})^{\sigma(i_{1})}\cdots (z^{i_{p}})^{\sigma(i_{p})})}{\p z^{i_l}}(-1)^{n-q+l-1} i_{\frac{\p\ }{\p z^{i_{1}}}}\circ\cdots\widehat{i_{\frac{\p\ }{\p z^{i_{l}}}}}\cdots\circ i_{\frac{\p\ }{\p z^{i_p}}}(dz^1\wedge \cdots\wedge dz^n)\\
&=\sum_{l=1}^{p}(-1)^{n-q+l-1}\left(\frac{\p f_{i_{1}\cdots i_{p}}}{\p z^{i_l}}(z^{i_{l}})^{\sigma(i_l)}+\sigma(i_l)f_{i_{1}\cdots i_{p}}\right)\\
&\quad (z^{i_{1}})^{\sigma(i_{1})}\cdots \widehat{(z^{i_l})^{\sigma(i_l)}}\cdots(z^{i_{p}})^{\sigma(i_{p})}i_{\frac{\p\ }{\p z^{i_{1}}}}\circ\cdots\widehat{i_{\frac{\p\ }{\p z^{i_{l}}}}}\cdots\circ i_{\frac{\p\ }{\p z^{i_p}}}(dz^1\wedge \cdots\wedge dz^n),
\end{align*}
which lies in $A^{n,n-q}(X, T^{p-1}_X(-\log D))\subset A^{n-p+1,n-q}(X)$ since the coefficient
$$
	\frac{\p f_{i_{1}\cdots i_{p}}}{\p z^{i_l}}(z^{i_{l}})^{\sigma(i_l)}+\sigma(i_l)f_{i_{1}\cdots i_{p}}
$$
is smooth.

Set $E=(E^{p-1})^*$ and $h$ as a hermitian metric on it. By bundle-valued Hodge Theorem (\ref{2.17}), one has
\begin{align}\label{log 4.1}
\begin{split}
(\mathcal{I}^*)^{-1}(\p\alpha)&=\mb{H}''_E((\mathcal{I}^*)^{-1}(\p\alpha))+\b{\p} \b{\p}_E^*\mb{G}''_E(\mathcal{I}^*)^{-1}(\p\alpha)\\
&=\mb{H}''_E((\mathcal{I}^*)^{-1}(\b{\p}\p\beta))+\b{\p} \b{\p}_E^*\mb{G}''_E(\mathcal{I}^*)^{-1}(\p\alpha).
\end{split}
\end{align}
Here the smooth complex $(n-p,n-q-1)$-form $\beta$ is chosen as
$$-\b{\p}^*\mb{G}''\alpha$$
according to Hodge decomposition theorem, K\"ahler identity on $X$ and the assumption $\b{\p}\p\alpha=0$.

Now we claim
\begin{equation}\label{h-vanishing}
  \mb{H}''_E((\mathcal{I}^*)^{-1}(\b{\p}\p\beta))=0.
\end{equation}
In fact,
let $e\in A^{0,q}(X,E)$ be a harmonic element, i.e., $\b{\p}e= \b{\p}_E^*e=0$, and
$$*': A^{p,q}(X, E)\to A^{n-p,n-q}(X, E^*)$$ the star operator, defined similar by \eqref{hodgestar}. So
\begin{align*}
((\mathcal{I}^*)^{-1}(\p\alpha),e)
 &=\int_X\langle(\mathcal{I}^*)^{-1}(\p\alpha),e\rangle\frac{\omega^n}{n!}\\
 &=\int_X (\mathcal{I}^*)^{-1}(\p\alpha)\wedge *'e\\
 &=\int_X \p\alpha\wedge \mathcal{I}^{-1}(*'e),
\end{align*}
where $\omega$ is a K\"ahler form on $X$ and the last equality follows from the analogous Lemma \ref{lemma3}.
Then
\begin{align*}
\int_X \p\alpha\wedge \mathcal{I}^{-1}(*'e)
 &=\int_X \b\p\p\beta\wedge \mathcal{I}^{-1}(*'e)\\
 &=\int_X (\b\p(\p\beta\wedge \mathcal{I}^{-1}(*'e))+(-1)^{p+q+1}\p\beta\wedge \b\p(\mathcal{I}^{-1}(*'e)))\\
 &=\int_X \b\p(\p\beta\wedge \mathcal{I}^{-1}(*'e)),
\end{align*}
where the last equality follows from an analogous vanishing argument of \eqref{2.19}. Notice that $\mathcal{I}^{-1}(*'e)$
is a  logarithmic $(p-1,q)$-form. By the formula \eqref{2.6}, one gets
\begin{align*}
\int_X \b\p(\p\beta\wedge \mathcal{I}^{-1}(*'e))
 &=\pm2\pi\sqrt{-1}\text{Res}(\mathcal{I}^{-1}(*'e))(\p\beta)\\
 &=\pm2\pi\sqrt{-1}\text{Res}(\mathcal{I}^{-1}(*'e))(\p\b{\p}^*\mb{G}''\alpha)\\
 &=\pm2\pi\sqrt{-1}\p\b{\p}^*\mb{G}''\text{Res}(\mathcal{I}^{-1}(*'e))(\alpha).
\end{align*}
As reasoned in \eqref{2.8}, Lemma \ref{lemma2} implies the existence of some current $C$ on $X$ such that
\begin{align*}
\p\b{\p}^*\mb{G}''\text{Res}(\mathcal{I}^{-1}(*'e))(\alpha)
 &=\int_X \p\b{\p} C\wedge \alpha\\
 &=\int_X C\wedge\b{\p}\p \alpha\\
 &=0,
 \end{align*}
where the last equality is got by the assumption $\b{\p}\p\alpha=0$. So we have proved the claim.

  Substituting \eqref{h-vanishing} into (\ref{log 4.1}), we have
$$
 	(\mathcal{I}^*)^{-1}(\p\alpha)=\b{\p} \b{\p}_E^*\mb{G}''_E(\mathcal{I}^*)^{-1}(\p\alpha).
$$
Therefore, one can find a solution
$$
x=\mathcal{I}^* \b{\p}_E^*\mb{G}''_E(\mathcal{I}^*)^{-1}(\p\alpha)\in A^{n,n-q-1}(X, T_X^{p-1}(-\log D)).
$$
\end{proof}

\section{Applications to algebraic geometry} \label{section3}

In this section, we will give three kinds of applications of Theorems $\ref{thm2}$ and \ref{sol-CY} to algebraic geometry. Throughout this section,
let $D$ be a simple normal crossing divisor on a compact K\"ahler manifold $X$.

\subsection{Closedness of logarithmic forms}

The theory of logarithmic forms has been playing very important roles in various aspects of analytic-algebraic geometry, in which the understanding of closedness of logarithmic forms is fundamental. In $1971$, Deligne \cite[(3.2.14)]{Del71} proved the $d$-closedness of logarithmic forms on a smooth complex quasi-projective variety by showing the degeneracy of logarithmic Hodge to de Rham spectral sequence, which is also to be studied in the next subsection. In fact, his proof works for a Zariski open subspace of a compact K\"ahler manifold. In $1995$, by using classical harmonic integral theory \cite{de,Kodaira} by de Rham and Kodaira, Noguchi \cite{Nog} gave a short proof of this result.

As the first application of Theorem $\ref{thm2}$, we generalize Deligne's result on the closeness of logarithmic forms \cite{Del71}, compared with the one in \cite{Nog}.
\begin{cor}\label{g-No}
	For any $\alpha\in A^{0,0}(X,\Omega^p_X(\log D))$ with $\b{\p}\p\alpha=0$, $\p\alpha=0$.
\end{cor}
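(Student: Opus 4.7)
The approach is to invoke Theorem~\ref{main theorem} at the boundary value $q=0$. For $\alpha \in A^{0,0}(X, \Omega^p_X(\log D))$ with $\b{\p}\p\alpha = 0$, that theorem would produce $x \in A^{0,-1}(X, \Omega^{p+1}_X(\log D))$ satisfying $\b{\p} x = \p\alpha$; since the target $A^{0,-1}(X, \Omega^{p+1}_X(\log D))$ is the zero space, necessarily $x = 0$ and hence $\p\alpha = 0$.

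All the real content lies in checking that the proof of Theorem~\ref{main theorem} still operates at the degenerate value $q=0$. Via the bundle isomorphism \eqref{iso1}, view $\mathcal{I}(\p\alpha) \in A^{0,0}(X, E^{p+1})$, which is $\b{\p}$-closed by hypothesis. The bundle-valued Hodge decomposition \eqref{2.17} reads
$$
\mathcal{I}(\p\alpha) = \mb{H}''_E(\mathcal{I}(\p\alpha)) + \b{\p}\,\b{\p}^*_E\mb{G}''_E(\mathcal{I}(\p\alpha)) + \b{\p}^*_E\b{\p}\,\mb{G}''_E(\mathcal{I}(\p\alpha)).
$$
The last term vanishes by $\b{\p}$-closedness, while $\b{\p}^*_E$ sends $A^{0,0}(X, E^{p+1})$ into $A^{0,-1}(X, E^{p+1}) = 0$, killing the middle term as well. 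Consequently $\mathcal{I}(\p\alpha)$ reduces to its harmonic projection $\mb{H}''_E(\mathcal{I}(\p\alpha))$.

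It then suffices to show that this harmonic part is zero, and for this the argument of Proposition~\ref{prop4} carries over verbatim at $q=0$: for any harmonic $e \in A^{0,0}(X, E^{p+1})$, Lemma~\ref{lemma3} gives $(\mathcal{I}(\p\alpha), e) = T_{\p\alpha}(\mathcal{I}^*(*'e))$, and Lemma~\ref{lemma2} (as deployed in the proof of Theorem~\ref{main theorem} to show $T_{\p\alpha}(\mathcal{I}^*(\beta)) \in \mathrm{Im}(\p\b{\p}\b{\p}^*\mb{G}'')(\mathcal{I}^*(\beta))$), combined with the vanishing $\b{\p}\mathcal{I}^*(*'e) = 0$ established in \eqref{2.19}, forces this pairing to be zero. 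Hence $\mb{H}''_E(\mathcal{I}(\p\alpha)) = 0$, so $\p\alpha = 0$.

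The only mild obstacle is a matter of bookkeeping: Proposition~\ref{prop4} is formulated so as to produce $\gamma \in A^{0,q-1}$, which is vacuous at $q = 0$, but the harmonic-vanishing step inside its proof is purely formal and transfers to $q = 0$ without change. Everything else is a direct application of the already-established machinery, so no new analytic input is required.
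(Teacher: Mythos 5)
Your proposal is correct and follows exactly the paper's route: the paper's own proof of Corollary \ref{g-No} is precisely the one-line observation that Theorem \ref{thm2} at $q=0$ forces the solution $x$ to lie in $A^{0,-1}(X,\Omega^{p+1}_X(\log D))=\{0\}$, whence $\p\alpha=\b{\p}x=0$. Your additional verification that the machinery of Proposition \ref{prop4} and Lemma \ref{lemma2} degenerates harmlessly at $q=0$ (the $\b{\p}\b{\p}^*\mb{G}''$ term dying for degree reasons and the harmonic projection vanishing by the same pairing argument) is sound but not part of the paper's argument, which simply applies the theorem as stated for all $q$.
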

\begin{proof}
	By Theorem \ref{thm2}, there exists a solution $$x\in A^{0,-1}(X,\Omega^p_X(\log D))=\{0\}$$ such that $\b{\p}x=\p\alpha$ and then
	$$
	\p\alpha=\b{\p}x=0
	$$
since $x=0$.
\end{proof}

\subsection{Degeneracy of spectral sequences}
As the second application of Theorems $\ref{thm2}$ and \ref{sol-CY}, we reprove Deligne's degeneracy of the logarithmic Hodge to de Rham spectral sequence at $E_1$ \cite{Del71} and also
its dual version on a compact K\"ahler manifold, respectively.

Two nice references for spectral sequences should be \cite{Griffith,V}.
Let $(X,D)$ be as above and $U=X-D$. One can show
\begin{equation}\label{drhc}
  H^k(U,\mathbb{C})=\mb{H}^{k}(X, \Omega^*_X(\log D))
\end{equation}
as \cite[Corollary 8.19]{V} or \cite[p. 453]{Griffith}.
The complex $\Omega^*_X(\log D)$ is equipped with the "naive" filtration, which induces
a filtration on $H^k(U,\mathbb{C})$, called the \emph{Hodge filtration} of $H^k(U,\mathbb{C})$:
$$F^pH^k(U,\mathbb{C})=\Im(\mb{H}^{k}(X, \Omega^{\geq p}_X(\log D))\rightarrow\mb{H}^{k}(X, \Omega^*_X(\log D))).$$
As for the holomorphic de Rham complex, the spectral sequence associated to the Hodge filtration on $\Omega^*_X(\log D)$
has first term equal to $$E^{p,q}_1=H^q(X,\Omega^p_X(\log D)),$$ where the differential is induced by $\partial$.

\begin{thm}\label{Dss}
	The spectral sequence associated with the Hodge filtration
	$$
	E^{p,q}_1=H^q(X,\Omega^p_X(\log D))\Rightarrow \mb{H}^{p+q}(X, \Omega^*_X(\log D))	
	$$
degenerates at the $E_1$-level.
\end{thm}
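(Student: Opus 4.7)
The plan is to recognize the spectral sequence as the one associated with the column filtration on the double complex $K^{p,q}:=A^{0,q}(X,\Omega^p_X(\log D))$ equipped with $\p$ and $\b{\p}$. Since each $\Omega^p_X(\log D)$ is locally free, these Dolbeault-type resolutions assemble into a fine resolution of $\Omega^*_X(\log D)$, so the total complex computes $\mb{H}^{p+q}(X,\Omega^*_X(\log D))$ and the column filtration $F^p=\bigoplus_{p'\geq p}K^{p',*}$ recovers the Hodge filtration. Under this identification, $E_1$-degeneracy is equivalent to the surjectivity, for every bidegree $(p,q)$, of the natural map
$$F^p\mb{H}^{p+q}(X,\Omega^*_X(\log D))\ \longrightarrow\ H^q(X,\Omega^p_X(\log D)),\qquad [\tilde\alpha]\mapsto [\alpha_0],$$
where $\alpha_0\in K^{p,q}$ is the lowest-$p$ component of a $d$-closed representative $\tilde\alpha\in F^p\text{Tot}^{p+q}(K^{*,*})$. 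Thus it suffices to lift every $\b{\p}$-closed $\alpha_0\in K^{p,q}$ to such a $\tilde\alpha$.

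The main step is an iterative descent. Starting from $\b{\p}\alpha_0=0$, I would inductively produce $\alpha_r\in K^{p+r,q-r}$ for $r=0,\dots,q$, satisfying $\b{\p}\alpha_{r+1}=\p\alpha_r$ for $0\leq r\leq q-1$ and $\p\alpha_q=0$. At each step Theorem \ref{thm2} supplies $\alpha_{r+1}$ as soon as $\b{\p}\p\alpha_r=0$ is known, and this hypothesis is automatic by induction: for $r=0$ it follows from $\b{\p}\alpha_0=0$, and for $r\geq 1$ the inductive identity $\b{\p}\alpha_r=\p\alpha_{r-1}$ gives $\p\b{\p}\alpha_r=\p^2\alpha_{r-1}=0$, whence $\b{\p}\p\alpha_r=-\p\b{\p}\alpha_r=0$. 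The iteration terminates at $\alpha_q\in A^{0,0}(X,\Omega^{p+q}_X(\log D))$, where the same compatibility combined with Corollary \ref{g-No} (equivalently the $q=0$ instance of Theorem \ref{thm2}, since $A^{0,-1}=0$) forces $\p\alpha_q=0$.

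Setting $\tilde\alpha:=\sum_{r=0}^{q}(-1)^r\alpha_r\in F^p\text{Tot}^{p+q}(K^{*,*})$, a bidegree-wise inspection yields
$$d\tilde\alpha \,=\, \b{\p}\alpha_0 \,+\, \sum_{r=0}^{q-1}(-1)^r\bigl(\p\alpha_r-\b{\p}\alpha_{r+1}\bigr) \,+\, (-1)^q\p\alpha_q \,=\, 0,$$
so $\tilde\alpha$ is $d$-closed with $(p,q)$-component exactly $\alpha_0$. As $(p,q)$ was arbitrary, the natural map above is surjective in every bidegree and hence the spectral sequence degenerates at $E_1$.

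The entire weight of the argument rests on Theorem \ref{thm2}; once one can solve $\b{\p}x=\p\alpha$ inside the logarithmic Dolbeault complex whenever $\b{\p}\p\alpha=0$, the above descent is purely formal and mirrors the classical derivation of Fr\"olicher degeneracy from the $\p\b{\p}$-lemma on compact K\"ahler manifolds. The only delicate points are the sign bookkeeping in the recursion and the termination through Corollary \ref{g-No}; the equivalence of edge-map surjectivity with $E_1$-degeneracy is a standard fact about first-quadrant spectral sequences, see e.g.\ \cite{V}.
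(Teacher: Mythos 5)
Your proposal is correct and rests on exactly the same engine as the paper's proof: the zig-zag description of the hypercohomology spectral sequence via the logarithmic Dolbeault double complex, plus the observation that $\b{\p}\p\alpha_r=0$ propagates along the zig-zag so that Theorem \ref{thm2} applies at every step (with Corollary \ref{g-No} terminating the chain). The only difference is packaging — you lift each $E_1$-class to a $d$-closed total cocycle in $F^p$, whereas the paper uses the explicit Cordero--Fern\'andez--Gray--Ugarte description of $E_r^{p,q}=Z_r^{p,q}/B_r^{p,q}$ and kills each $d_r$ directly — and these are standard equivalent formulations of the same degeneration argument.
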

\begin{proof}
The proof needs a logarithmic analogue of the general description on the terms in the Fr\"olicher spectral sequence as in \cite[Theorems 1 and 3]{cfgu}.
By Dolbeault isomorphism theorem, one has
$$
H^q(X,\Omega^p_X(\log D))\cong H^{0,q}_{\b{\p}}(X,\Omega^p_X(\log D)):=\frac{\text{Ker}(\b{\p})\cap A^{0,q}(X,\Omega^p_X(\log D))}{\b{\p}A^{0,q-1}(X,\Omega^p_X(\log D))}.
$$
Now we want to interpret $$E_r^{p,q}\cong Z_r^{p,q}/B_r^{p,q},$$
where $Z_r^{p,q}$ lies between the $\b{\p}$-closed and $d$-closed logarithmic $(p,q)$-forms and $B_r^{p,q}$ lies between the $\b{\p}$-exact and $d$-exact logarithmic $(p,q)$-forms
in some senses.
Actually,
$$Z_1^{p,q}=\{\alpha\in A^{0,q}(X,\Omega^p_X(\log D))\ |\ \b{\p}\alpha=0\},$$
$$B_1^{p,q}=\{\alpha\in A^{0,q}(X,\Omega^p_X(\log D))\ |\ \alpha=\b{\p}\beta, \beta\in A^{0,q-1}(X,\Omega^p_X(\log D))\}.$$
For $r\geq 2$,
\begin{equation}\label{zr}
\begin{aligned}
Z_r^{p,q}=\{\alpha_{p,q}\in A^{0,q}(X,\Omega^p_X(\log D))\ |
 &\ \b{\p}\alpha_{p,q}=0,\ \text{and there exist }\\
 &\text{$\alpha_{p+i,q-i}\in A^{0,q-i}(X,\Omega^{p+i}_X(\log D))$ }\\
  &\text{such that $\p\alpha_{p+i-1,q-i+1}+\b\p\alpha_{p+i,q-i}=0, 1\leq i\leq r-1$}
 \},
\end{aligned}
\end{equation}
\begin{align*}
B_r^{p,q}=\{&\p\beta_{p-1,q}+\b\p\beta_{p,q-1}\in A^{0,q}(X,\Omega^p_X(\log D))\ |
 \ \text{there exist }\\
 &\text{ $\beta_{p-i,q+i-1}\in A^{0,q+i-1}(X,\Omega^{p-i}_X(\log D))$, $2\leq i\leq r-1, $}\\
 &\text{ such that $\p\beta_{p-i,q+i-1}+\b\p\beta_{p-i+1,q+i-2}=0, \b\p\beta_{p-r+1,q+r-2}=0$}\},
\end{align*}
and the map $d_r: E_r^{p,q}\longrightarrow E_r^{p+r,q-r+1}$ is given by
$$d_r[\alpha_{p,q}]=[\p\alpha_{p+r-1,q-r+1}],$$
where $[\alpha_{p,q}]\in E_r^{p,q}$ and $\alpha_{p+r-1,q-r+1}$ appears in \eqref{zr}. Notice that $\b\p\alpha_{p+r-1,q-r+1}$
doesn't necessarily vanish for $r\geq 2$. Hence, a direct and exact application of Theorem \ref{thm2} implies
$$d_i=0,\ \forall i\geq 1,$$
which is indeed the desired degeneracy.
\end{proof}

From the Theorem \ref{Dss} and \eqref{drhc}, one has a non-canonical logarithmic Hodge decomposition.
\begin{cor}\label{decomposition}
Let $X$ be a compact K\"ahler manifold and $D$ a simple normal crossing divisor on $X$. Then
$$
\dim_\mb{C}H^k(X-D,\mb{C})=\sum_{p+q=k}\dim_\mb{C} H^q(X,\Omega^p_X(\log D)).
$$
\end{cor}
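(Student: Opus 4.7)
The plan is to derive the equality by chaining together the two ingredients that have just been established: the isomorphism \eqref{drhc} between the de Rham cohomology of $U=X-D$ and the hypercohomology $\mb{H}^{\ast}(X,\Omega^{\ast}_X(\log D))$, and the $E_1$-degeneration of the logarithmic Hodge to de Rham spectral sequence proved in Theorem \ref{Dss}. Once these are in hand, the statement is a formal consequence of the general fact that a spectral sequence degenerating at $E_1$ computes the dimension of its abutment as the sum of the dimensions of its $E_1$-terms along the relevant antidiagonal.

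In detail, first I would recall from \eqref{drhc} the canonical identification
$$H^k(U,\mb{C}) \cong \mb{H}^{k}(X,\Omega^{\ast}_X(\log D)),$$
so that in particular $\dim_{\mb{C}} H^k(U,\mb{C}) = \dim_{\mb{C}} \mb{H}^{k}(X,\Omega^{\ast}_X(\log D))$. Next, since Theorem \ref{Dss} gives $E_1 = E_\infty$ for the spectral sequence
$$E^{p,q}_1 = H^q(X,\Omega^p_X(\log D)) \Rightarrow \mb{H}^{p+q}(X,\Omega^{\ast}_X(\log D)),$$
the associated graded of the Hodge filtration on $\mb{H}^{k}(X,\Omega^{\ast}_X(\log D))$ satisfies
$$\mathrm{Gr}^p_F \mb{H}^{p+q}(X,\Omega^{\ast}_X(\log D)) \cong E^{p,q}_\infty = E^{p,q}_1 = H^q(X,\Omega^p_X(\log D)).$$
Summing dimensions over $p+q=k$ gives
$$\dim_{\mb{C}} \mb{H}^{k}(X,\Omega^{\ast}_X(\log D)) = \sum_{p+q=k} \dim_{\mb{C}} H^q(X,\Omega^p_X(\log D)),$$
which, combined with the first identification, yields the stated formula.

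There is essentially no obstacle here beyond bookkeeping; the corollary is a formal repackaging of Theorem \ref{Dss} together with \eqref{drhc}. The only point that requires a line of care is to note that the Hodge filtration $F^{\bullet}$ on $\mb{H}^{k}(X,\Omega^{\ast}_X(\log D))$ is finite and exhaustive, so the total dimension of the abutment equals the sum of the dimensions of the graded pieces; this is automatic in our setting since the filtration has only finitely many nonzero steps (bounded by the degree $k$ and the dimension $n$). The proof therefore amounts to writing the three displayed equalities above in sequence.
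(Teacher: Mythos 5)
Your proposal is correct and is exactly the argument the paper intends: the corollary is stated as an immediate consequence of the identification \eqref{drhc} and the $E_1$-degeneration of Theorem \ref{Dss}, with the dimension count coming from the graded pieces of the Hodge filtration just as you write. No gaps.
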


Similar to Theorem \ref{Dss}, one can use Theorem \ref{sol-CY} to prove a dual version of Theorem \ref{Dss} on compact K\"ahler manifolds.
\begin{cor}\label{Ddual}
The spectral sequence associated with the Hodge filtration
\begin{align*}
E^{p,q}_1=H^q(X,\Omega^p_X(\log D)\otimes \mc{O}_X(-D))\Rightarrow 	\mb{H}^{p+q}(X,\Omega^*_X(\log D)\otimes \mc{O}_X(-D))
\end{align*}
	degenerates at $E_1$-level.
\end{cor}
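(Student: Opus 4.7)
The plan is to copy the proof of Theorem \ref{Dss} almost verbatim, with Theorem \ref{thm2} replaced by Theorem \ref{sol-CY}, once the latter is transported to the twisted setting by a sheaf-theoretic duality. The main preliminary step is therefore to reinterpret Theorem \ref{sol-CY} as a ``$\p\b{\p}$-lemma'' for $\Omega^p_X(\log D)\otimes\mc{O}_X(-D)$-valued forms.

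To set up the identification, observe that the perfect wedge pairing
$$\Omega^p_X(\log D)\otimes \Omega^{n-p}_X(\log D)\longrightarrow \Omega^n_X(\log D)\cong K_X\otimes \mc{O}_X(D)$$
yields $\Omega^{n-p}_X(\log D)\cong T_X^p(-\log D)\otimes K_X\otimes\mc{O}_X(D)$, and twisting by $\mc{O}_X(-D)$ gives the crucial isomorphism
$$\Omega^{p}_X(\log D)\otimes \mc{O}_X(-D)\cong T_X^{n-p}(-\log D)\otimes K_X.$$
Consequently $A^{0,q}(X,\Omega^p_X(\log D)\otimes\mc{O}_X(-D))$ is canonically identified with $A^{n,q}(X,T_X^{n-p}(-\log D))$, and the operators $\b{\p}$ and $\p$ correspond under this identification (for $\p$ this is the local description exhibited in the proof of Theorem \ref{sol-CY}). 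Applying Theorem \ref{sol-CY} with parameters $(n-p,n-q)$ in place of $(p,q)$ then translates into the following twisted $\p\b{\p}$-lemma: for any $\alpha\in A^{0,q}(X,\Omega^p_X(\log D)\otimes\mc{O}_X(-D))$ with $\b{\p}\p\alpha=0$, there exists
$$x\in A^{0,q-1}(X,\Omega^{p+1}_X(\log D)\otimes\mc{O}_X(-D))\quad\text{with}\quad \b{\p}x=\p\alpha.$$

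With this at hand, I would repeat the zigzag argument of Theorem \ref{Dss}. Since $\Omega^p_X(\log D)\otimes\mc{O}_X(-D)$ is locally free, Dolbeault's theorem gives
$$E_1^{p,q}\cong H^{0,q}_{\b{\p}}(X,\Omega^p_X(\log D)\otimes\mc{O}_X(-D)),$$
and for $r\ge 2$ the term $E_r^{p,q}$ admits the zigzag presentation analogous to \eqref{zr}, with $\Omega^{\bullet}_X(\log D)$ replaced everywhere by $\Omega^{\bullet}_X(\log D)\otimes\mc{O}_X(-D)$, and with differential $d_r[\alpha_{p,q}]=[\p\alpha_{p+r-1,q-r+1}]$. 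The top zigzag element automatically satisfies $\b{\p}\p\alpha_{p+r-1,q-r+1}=0$, so the twisted $\p\b{\p}$-lemma produces $x\in A^{0,q-r}(X,\Omega^{p+r}_X(\log D)\otimes\mc{O}_X(-D))$ with $\b{\p}x=\p\alpha_{p+r-1,q-r+1}$, forcing $d_r=0$ for every $r\ge 1$.

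The main obstacle I expect is purely bookkeeping: tracking the twist by $\mc{O}_X(D)$ arising from $\Omega^n_X(\log D)\cong K_X\otimes\mc{O}_X(D)$, and checking that the holomorphic differential $\p$ really corresponds on both sides of the isomorphism $\Omega^p_X(\log D)\otimes\mc{O}_X(-D)\cong T_X^{n-p}(-\log D)\otimes K_X$ (so that applying Theorem \ref{sol-CY} directly yields the twisted $\p\b{\p}$-lemma above, without any spurious sign or twist). Once this dictionary is pinned down, the degeneracy at $E_1$ follows by a formal transcription of the proof of Theorem \ref{Dss}.
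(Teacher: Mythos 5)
Your proposal is correct and follows essentially the same route as the paper: the paper likewise uses the duality isomorphism $(\Omega^p_X(\log D))^*\cong \Omega^{n-p}_X(\log D)\otimes\mc{O}_X(-K_X-D)$ to identify $A^{0,n-q}(X,\Omega^{n-p}_X(\log D)\otimes\mc{O}_X(-D))$ with $A^{n,n-q}(X,T^p_X(-\log D))$, applies Theorem \ref{sol-CY} to obtain exactly your twisted $\p\b{\p}$-lemma, and then invokes the zigzag argument of Theorem \ref{Dss} verbatim. The compatibility of $\p$ under the identification, which you flag as the main bookkeeping concern, is precisely what the local computation in the proof of Theorem \ref{sol-CY} establishes.
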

\begin{proof}
By the  isomorphism
$$
(\Omega^p_X(\log D))^*\cong \Omega^{n-p}_X(\log D)\otimes \mc{O}_X(-K_X-D),	
$$
we have
$$
 A^{0,n-q}(X,\Omega^{n-p}_X(\log D)\otimes \mc{O}_X(-D))=A^{n,n-q}(X, T_X^p(-\log D))\subset A^{n-p,n-q}(X).	
$$
By Theorem \ref{sol-CY}, for any $\alpha\in  A^{0,n-q}(X,\Omega^{n-p}_X(\log D)\otimes \mc{O}_X(-D))$ with $\b{\p}\p\alpha=0$, there exists $x\in  A^{0,n-q-1}(X,\Omega^{n-p+1}_X(\log D)\otimes \mc{O}_X(-D))$ such that
$$\b{\p}x=\p\alpha.$$
From the same argument as in Theorem \ref{Dss}, it follows that the spectral sequence
	\begin{align*}
E^{p,q}_1=H^q(X,\Omega^p_X(\log D)\otimes \mc{O}_X(-D))\Rightarrow 	\mb{H}^{p+q}(X,\Omega^*_X(\log D)\otimes \mc{O}_X(-D))
\end{align*}
	degenerates at $E_1$-level.
\end{proof}
\begin{rem}
If $X$ is a proper smooth algebraic variety over $\mb{C}$, a proof of the above result was given by O. Fujino in \cite[Section 2.29]{Fujino}.
The duality between Theorem \ref{Dss} and Corollary \ref{Ddual} was pointed out in \cite[Remark 2.11]{Fujinosjv}.
\end{rem}

\subsection{Injectivity theorem}
The third result of Theorem \ref{main theorem} is an injectivity theorem for compact K\"ahler manifolds, whose algebraic version was first proved by F. Ambro \cite[Theorem 2.1]{Ambro}.
\begin{cor}\label{inj}
Let $X$ be a compact K\"ahler manifold and $D$ a simple normal crossing divisor. Then the restriction homomorphism
$$
	H^q(X,\Omega^n_X(\log D))\xrightarrow{i} H^q(U, K_U)
$$
	is injective, where $U=X-D$. Equivalently, if $\Delta$ is an effective divisor with $\text{Supp}(\Delta)\subset \text{Supp}(D)$, then
 the natural homomorphism induced by the inclusion $\mc{O}_X\subset \mc{O}_X(\Delta)$
$$
		H^q(X,\Omega^n_X(\log D))\xrightarrow{i'} H^q(X, \Omega^n_X(\log D)\otimes\mc{O}_X(\Delta))
$$
is injective.
\end{cor}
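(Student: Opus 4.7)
My plan is to prove injectivity of $i$ directly by factoring it through the $(n+q)$-th de Rham cohomology of $U$, and then read off injectivity of each $i'$ as its equivalent direct-limit reformulation.

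First I will set up the equivalence of the two formulations. The identification
\[
H^q(U,K_U)=\varinjlim_{\Delta}H^q\bigl(X,\Omega^n_X(\log D)\otimes \mc{O}_X(\Delta)\bigr),
\]
with $\Delta$ ranging over effective divisors having $\mathrm{Supp}(\Delta)\subset\mathrm{Supp}(D)$, follows from $\tau_*K_U=\Omega^n_X(*D)=\varinjlim_\Delta\Omega^n_X(\log D)\otimes\mc{O}_X(\Delta)$ noted in Section \ref{section1}, combined with the local vanishing $R^i\tau_*K_U=0$ for $i>0$ (a small polydisk minus a normal crossing divisor is Stein, so Cartan~B applies) and exactness of direct limits. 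The factorization $i=(\text{restriction to }U)\circ i'$ then makes injectivity of each $i'$ equivalent to injectivity of $i$: if $[\alpha]$ is killed by $i$, its image in the direct limit vanishes and hence already vanishes at some finite stage.

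The core argument factors $i$ through $H^{n+q}(U,\mb{C})$. Deligne's quasi-isomorphism $\Omega^\bullet_X(\log D)\simeq R\tau_*\mb{C}_U$ together with the stupid filtration give the hypercohomology edge map
\[
j:H^q(X,\Omega^n_X(\log D))=\mb{H}^{n+q}(X,\Omega^{\geq n}_X(\log D))\longrightarrow \mb{H}^{n+q}(X,\Omega^\bullet_X(\log D))=H^{n+q}(U,\mb{C}),
\]
using that $\Omega^{\geq n}_X(\log D)$ is concentrated in degree $n$. Separately, a $\b{\p}$-closed $(n,q)$-form on $U$ is automatically $d$-closed (since $\p$ would raise the holomorphic degree above $n$), which yields a Dolbeault-to-de Rham map $\mathrm{Dol}:H^q(U,K_U)\to H^{n+q}(U,\mb{C})$. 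Evaluating on a $\b{\p}$-closed representative $\alpha\in A^{0,q}(X,\Omega^n_X(\log D))$, both $j([\alpha])$ and $\mathrm{Dol}(i([\alpha]))$ return the de Rham class of $\alpha|_U$, so $j=\mathrm{Dol}\circ i$. Next I invoke Theorem \ref{Dss}, whose proof in this paper rests on Theorem \ref{main theorem}: the $E_1$-degeneration enforces strictness of the Hodge filtration and identifies $j$ with the canonical injection $H^q(X,\Omega^n_X(\log D))\xrightarrow{\sim}F^n H^{n+q}(U,\mb{C})\hookrightarrow H^{n+q}(U,\mb{C})$. Thus $j$ is injective, and $\ker(i)\subset\ker(j)=0$ gives injectivity of $i$; the $i'$-statement then follows from the equivalence of the first paragraph.

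The hard part is bookkeeping rather than analysis: verifying the commutativity $j=\mathrm{Dol}\circ i$, the strictness identification $\mb{H}^{n+q}(X,\Omega^{\geq n}_X(\log D))\cong F^n H^{n+q}(U,\mb{C})$ coming from degeneration, and the Leray-type isomorphism $H^q(U,K_U)=H^q(X,\tau_*K_U)$ needed for the direct-limit description. Each of these is standard but must be traced through Deligne's comparison theorem with some care so that the map $i$ appearing in the statement of the corollary really coincides with the one in the factorization through $H^{n+q}(U,\mb{C})$. No new analytic input beyond Theorem \ref{main theorem} (applied via Theorem \ref{Dss}) is required.
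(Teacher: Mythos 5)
Your proposal is correct, and it shares the paper's overall skeleton: both arguments factor $i$ through $H^{n+q}(U,\mb{C})$ via the commutative square whose left vertical arrow is the edge map $j_1\colon H^q(X,\Omega^n_X(\log D))\to \mb{H}^{n+q}(X,\Omega^\bullet_X(\log D))\cong H^{n+q}(U,\mb{C})$, and both reduce the corollary to injectivity of that edge map. The difference is in how that injectivity is obtained. The paper argues directly: if $j_1([\alpha])=0$ then $\alpha=d\beta$ for a logarithmic form $\beta$, so $\alpha=\p\beta_{n-1,q}+\b{\p}\beta_{n,q-1}$; since $\b{\p}\p\beta_{n-1,q}=0$, Theorem \ref{thm2} produces $\gamma$ with $\p\beta_{n-1,q}=\b{\p}\gamma$, whence $\alpha=\b{\p}(\gamma+\beta_{n,q-1})$ and $[\alpha]=0$. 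You instead invoke the already-proved $E_1$-degeneration (Theorem \ref{Dss}) and the resulting identification of the edge map with the inclusion $E_1^{n,q}\cong E_\infty^{n,q}=F^nH^{n+q}(U,\mb{C})\hookrightarrow H^{n+q}(U,\mb{C})$ (valid because the outgoing differentials from the $(n,q)$-spot vanish for degree reasons, so $E_\infty^{n,q}$ is a quotient of $E_1^{n,q}$ of the same finite dimension). Both routes ultimately rest on the same analytic input; the paper's is more self-contained and avoids unwinding what degeneration says about edge maps, while yours is the standard Hodge-theoretic shortcut. Your first paragraph also proves the full equivalence of the two formulations via the direct limit $\tau_*K_U=\varinjlim_\Delta\Omega^n_X(\log D)\otimes\mc{O}_X(\Delta)$ and $R^{>0}\tau_*K_U=0$; the paper only uses the easy direction (the factorization $i=j'\circ i'$ gives injectivity of $i'$ from that of $i$), which suffices for the statement, and attributes the equivalence to Ambro. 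Your extra care there is harmless and arguably makes the word ``equivalently'' honest.
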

\begin{proof}
Consider the commutative diagram:
\begin{equation}\label{comute}
\begin{CD}
H^q(X,\Omega^n_X(\log D)) @>{i}>> H^{q}(U,K_U)\\
@VV{j_1}V @ VV {j_2} V\\
H^{q+n}(X,\Omega_{X}^{\bullet}(\log D)) @>{\cong}>> H^{q+n}(U,\mb{C}),
\end{CD}
\end{equation}
where both $j_1$, $j_2$ are induced by the identity maps, and the isomorphism $H^{q+n}(X,\Omega^{\cdot}_X(\log D))\cong H^{q+n}(U,\mb{C})$ appears as \cite[Theorem 1.3 in Part III]{Ancona}.
For any element $[\alpha]\in H^q(X,\Omega^n_X(\log D))$,
$$
j_1([\alpha])=[\alpha]_d\in H^{q+n}(X,\Omega^{\bullet}_X(\log D)).	
$$
If $j_1([\alpha])=0$, i.e., $[\alpha]_d=0$, there exists a logarithmic form $\beta\in A^{q+n-1}(X,\Omega^{\bullet}_X(\log D))$ such that
$$
	\alpha=d\beta.
$$
Therefore, the components
 $\beta_{n-1,q}\in A^{0,q}(X,\Omega^{n-1}_X(\log D))$ and $\beta_{n,q-1}\in A^{0,q-1}(X,\Omega^{n}(\log D))$ of $\beta$ satisfy
 \begin{align}\label{injective1}
\alpha=\p\beta_{n-1,q}+\b{\p}\beta_{n,q-1}.
 \end{align}
Notice that $\b{\p}(\p\beta_{n-1,q})=\b{\p}\alpha=0$, by Theorem \ref{thm2}, and  there exists $\gamma\in A^{0,q-1}(X,\Omega^{n}(\log D))$ such that
\begin{align}\label{injective2}
\p\beta_{n-1,q}=\b{\p}\gamma.
\end{align}
Combining (\ref{injective1}) with (\ref{injective2}), one has
$$
\alpha=\b{\p}(\gamma+\beta_{n,q-1}),	
$$
which implies that $[\alpha]=0\in H^q(X,\Omega^n_X(\log D))$. So we get the injectivity of $j_1$ and thus that of $i$ by the commutative diagram \eqref{comute}.

 Now we prove the injectivity of $i'$. The mapping $i$ can be decomposed into
$$\xymatrix{&H^q(X, \Omega^n_X(\log D)\otimes\mc{O}_X(\Delta))\ar[dr]^{j'}&\\
H^q(X,\Omega^n_X(\log D))\ar[ur]^{i'}\ar[rr]_{i} & & H^q(U,K_U),
}$$
where $j'$ is induced by the identity map. So the injectivity of $i$ implies that of $i'$.
We remark that the equivalence here was first proposed in \cite[Remark 2.6 and Corollary 2.7]{Ambro}
\end{proof}
\begin{rem}
O. Fujino  \cite[Theorem 1.1]{Fujinoinj} generalized Ambro's algebraic version to a simple normal crossing algebraic variety.
\end{rem}

\section{Applications to logarithmic deformations} \label{section4}

We will present three applications of Theorems $\ref{thm2}$ and \ref{sol-CY} to logarithmic deformations in this section. Throughout this section, we assume that
 $X$ is a compact K\"ahler manifold and $D$ is a simple normal crossing divisor on $X$.

Firstly, we recall basic notions and properties of logarithmic deformations in \cite{kawa}. 
 \begin{defn}[\cite{kawa}]\label{kwawa1}
 \textit{A family of logarithmic deformations} of a pair $(X,D)$ is a $7$-tuple $\mathscr{F}=(\mathscr{X},\b{\mathscr{X}},\b{\mathscr{D}}, \b{\pi},S,s_0,\b{\psi})$ satisfying the following conditions:
  \begin{enumerate}
    \item $\b{\pi}:\b{\mathscr{X}}\to S$ is a proper smooth morphism of complex space $\b{\mathscr{X}}$ and $S$.
    \item $\b{\mathscr{D}}$ is  a closed analytic subset of $\b{\mathscr{X}}$ and $\mathscr{X}=\b{\mathscr{X}}-\b{\mathscr{D}}$.
    \item $\b{\psi}:X\to \b{\pi}^{-1}(s_0)$ is an isomorphism such that $\b{\psi}(X-D)=\b{\pi}^{-1}(s_0)\cap \mathscr{X}$.
    \item $\b{\pi}$ is  locally a projection of a product space as well as the restriction of it to $\b{\mathscr{D}}$, that is, for each $p\in \b{\mathscr{X}}$ there exist an open neighborhood $U$ of $p$ and an isomorphism $\mu:U\to V\times W$, where $V=\b{\pi}(U)$ and $W=U\cap\b{\pi}^{-1}(\b{\pi}(p))$, such that the following diagram
$$\xymatrix{U\ar[dr]_{\b{\pi}}\ar[rr]^{\mu} & & V\times W\ar[dl]^{\text{pr}_1}\\
&V&}$$
  commutes and $\mu(U\cap\b{\mathscr{D}})=V\times(W\cap\b{\mathscr{D}})$.
  \end{enumerate}
\end{defn}

Moreover, a family $\mathscr{F}=(\mathscr{X},\b{\mathscr{X}},\b{\mathscr{D}}, \b{\pi},S,s_0,\b{\psi})$ of pair $(X,D)$ is called \textit{semi-universal} (cf. \cite[Definition 5]{kawa}) if for any family $\mathscr{F}'=(\mathscr{X}',\b{\mathscr{X}}',\b{\mathscr{D}}', \b{\pi'},S',s_0',\b{\psi}')$ of logarithmic deformations of $(X,D)$ there exist an open neighborhood $S''$ of $s'_0$ in $S'$ and a morphism $\alpha:S''\to S$ such that
\begin{itemize}
\item[1)] The restriction $\mathscr{F}'|_{S''}$ of $\mathscr{F}'$ over $S''$ is isomorphic to the induced family $\alpha^*\mathscr{F}$,
\item[2)] For any $S_0''$ and $\alpha_0$ satisfying the same condition as in $1)$, the induced tangential maps $T_{\alpha}$ and $T_{\alpha_0}$ from $T_{S',s_0'}$ to $T_{S,s_0}$ coincide.
\end{itemize}

In \cite[Theorem 1]{kawa}, Y. Kawamata proved the following Kuranishi type theorem, whose proof also plays an important role in constructing extension of logarithmic forms.
\begin{thm}[\cite{kawa}]\label{thm3.1}
There exists a semi-universal family $\mathscr{F}$ of logarithmic deformations of the pair $(X,D)$.
\end{thm}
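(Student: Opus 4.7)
The plan is to adapt Kuranishi's construction to the logarithmic setting. The key observation is that logarithmic deformations of $(X,D)$ should be parametrized by integrable Beltrami differentials with values in the logarithmic tangent sheaf, i.e.\ elements $\varphi\in A^{0,1}(X,T_X(-\log D))$ satisfying the Maurer--Cartan equation
\begin{equation*}
\bar\partial\varphi=\tfrac{1}{2}[\varphi,\varphi].
\end{equation*}
The point is that the local product decomposition of $\bar{\pi}$ compatible with $\bar{\mathscr{D}}$ demanded in Definition \ref{kwawa1} translates, at the infinitesimal level, precisely to the requirement that $\varphi$ take values in $T_X(-\log D)\subset T_X$ rather than in $T_X$. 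The Lie bracket on logarithmic vector fields is closed, so $[\varphi,\varphi]\in A^{0,2}(X,T_X(-\log D))$, and the simple normal crossing hypothesis on $D$ ensures that no boundary terms obstruct this.

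First I would fix a smooth Hermitian metric on the locally free sheaf $T_X(-\log D)$ (for example, a smooth metric on the holomorphic vector bundle associated to it as in \eqref{iso2}) and set up the bundle-valued Hodge decomposition
\begin{equation*}
\mathbb{I}=\mathbb{H}''+\bar\partial\,\bar\partial^{*}\mathbb{G}''+\bar\partial^{*}\bar\partial\,\mathbb{G}''
\end{equation*}
on $A^{0,\bullet}(X,T_X(-\log D))$, exactly as in \eqref{2.17}. This produces a harmonic projector $\mathbb{H}''$ onto $H^{1}(X,T_X(-\log D))$ and a Green operator $\mathbb{G}''$ with the standard elliptic estimates.

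Next I would choose a basis $\eta_1,\ldots,\eta_m$ of harmonic representatives of $H^1(X,T_X(-\log D))$ and construct a formal power series
\begin{equation*}
\varphi(t)=\sum_{i=1}^m t_i\eta_i+\sum_{|I|\geq 2}\varphi_I t^I
\end{equation*}
by the recursion $\varphi_I=\tfrac{1}{2}\bar\partial^{*}\mathbb{G}''\sum_{J+K=I}[\varphi_J,\varphi_K]$, inductively checking that the right-hand side is $\bar\partial$-closed. The Kuranishi space $S\subset\mathbb{C}^m$ is cut out by the obstruction equations $\mathbb{H}''\sum_{J+K=I}[\varphi_J,\varphi_K]=0$, which land in $H^2(X,T_X(-\log D))$. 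Convergence of $\varphi(t)$ in a suitable H\"older norm on a neighborhood of $0$ follows by Kuranishi's majorant-series argument, since the elliptic estimates for $\bar\partial^{*}\mathbb{G}''$ are exactly of the same form as in the classical case. Finally, for each $s\in S$ one produces the deformed complex structure on the underlying smooth manifold $X$ from $\varphi(s)$, and because $\varphi(s)$ takes values in $T_X(-\log D)$, the analytic subset $D$ deforms coherently to a divisor $\bar{\mathscr{D}}_s\subset\bar{\mathscr{X}}_s$ whose local structure is still normal crossing; patching these gives $\bar{\mathscr{X}}\to S$ and $\bar{\mathscr{D}}$. Semi-universality is verified by the usual argument: given another family $\mathscr{F}'$, the associated Kodaira--Spencer map produces a unique first-order morphism $T_{S',s_0'}\to T_{S,s_0}$, which extends by the recursion to a morphism $\alpha:S''\to S$ inducing $\mathscr{F}'|_{S''}$.

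The main obstacle will be verifying that the entire Kuranishi machine can be run inside the subcomplex $A^{0,\bullet}(X,T_X(-\log D))\subset A^{0,\bullet}(X,T_X)$ rather than only in the ambient complex. Concretely, one must check (i) that the chosen Hermitian metric makes $\bar\partial^{*}$ and $\mathbb{G}''$ preserve the logarithmic condition, so that each $\varphi_I$ stays in $T_X(-\log D)$, and (ii) that the deformed complex structure on $X\setminus D$ extends across $D$ in such a way that $D$ itself deforms to a normal crossing divisor in $\bar{\mathscr{X}}_s$. Both points follow from local coordinate computations in a logarithmic coordinate system \eqref{lcs}, using that the Lie bracket of two vector fields tangent to $D$ is again tangent to $D$, and that $\bar\partial$ together with a metric on $T_X(-\log D)$ restricted to a polydisc chart preserve the logarithmic framing $\{z^i\partial/\partial z^i,\partial/\partial z^j\}$.
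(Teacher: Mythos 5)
This theorem is not proved in the paper at all: it is quoted verbatim from Kawamata \cite[Theorem 1]{kawa}, and your outline is essentially Kawamata's own Kuranishi-type argument, which the paper later relies on (cf.\ the description of the semi-universal family via logarithmic Beltrami differentials $\varphi\in\Gamma_{\text{real analytic}}(X,T_X(-\log D)\otimes\Lambda^{0,1}T^*X)$ satisfying \eqref{integrable}, and the invocation of Kawamata's Lemma~1 in the proof of Proposition \ref{prop2} to see that $\underline{D}\times S$ stays analytic for the deformed structure). So the approach is the right one and matches the source.

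One small correction to your ``main obstacle'': point (i) is not actually an issue. Once you pass to the holomorphic vector bundle associated with the locally free sheaf $T_X(-\log D)$ (as the paper does with $E^p$ in \eqref{iso1} and the bundle-valued Hodge decomposition \eqref{2.17}), the operators $\bar\partial^*$ and $\mathbb{G}''$ are defined intrinsically on $A^{0,\bullet}(X,T_X(-\log D))$ and cannot leave it; there is no need for the metric to be compatible with the inclusion into $A^{0,\bullet}(X,T_X)$. The genuinely nontrivial points are the ones you list under (ii) and the closure of the bracket, namely that $[\cdot,\cdot]$ preserves logarithmic coefficients and that the real-analytic locus $\underline{D}$ remains a normal crossing divisor for the complex structure determined by $\varphi(s)$ — both of which do follow from the local computation in a logarithmic coordinate system \eqref{lcs}, since the $\partial/\partial z^i$-component of $\varphi$ carries a factor of $z^i$ for $i\le k$.
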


Let $T_X(-\log D)$ be the dual sheaf of $\Omega^1_X(\log D)$. Then the set of infinitesimal logarithmic deformations is $H^1(X, T_X(-\log D))$  (cf. \cite{Groth}). Moreover, as shown on \cite[p. 251]{kawa}, the semi-universal family $\mathscr{F}$ in Theorem \ref{thm3.1} can be obtained from a subspace of
$$\Gamma_{\text{real analytic}}(X, T_X(-\log D)\otimes \Lambda^{0,1}T^*X),$$ which, usually called the space of \emph{Beltrami differentials},  consists of sections  satisfying the integrability  condition:
\begin{align}\label{integrable}
\b{\p}\varphi=\frac{1}{2}[\varphi,\varphi].
\end{align}

\subsection{Extension of logarithmic forms}

In this subsection, we consider the extension problem of $\b{\p}$-closed logarithmic $(n,q)$-form under the logarithmic deformations on a K\"ahler manifold and obtain the local stabilities of log Calabi-Yau structures. For the case of smooth $(n,q)$-form, a good reference is \cite{Tujie}.

Let $\mathscr{F}=(X_t, D_t)$, $t\in S$ be a family of logarithmic deformations of pair $(X,D)$. For any $\b{\p}$-closed $(n,q)$-logarithmic form $\Omega\in A^{0,q}(X,\Omega^n_X(\log D))$, we want to extend this form $\Omega$ to $\cup_{t\in\Delta}X_t$ smoothly for some small neighborhood $\Delta\subset S$ of the reference point $s_0\in S$, and thus get a $\b{\p}$-closed logarithmic $(n,q)$-form when restricted to each $X_t$, $t\in \Delta$.

Without loss of generality, we may assume that $\mathscr{F}=(X_t, D_t)$, $t\in S$ is a semi-universal family. In fact, if we assume $\Omega_t$, $t\in \Delta\subset S$ is a smooth extension of $\Omega$ on the semi-universal family $\mathscr{F}|_{\Delta}=(X_t, D_t), t\in \Delta$, and assume that $\mathscr{F}'=(X'_t, D'_t)$, $t\in S'$ is another family of logarithmic deformation of pair $(X,D)$. By the definition of semi-universal family,  there exist an open neighborhood $\Delta'$ of $s_0'$ and a morphism $\alpha:\Delta'\to S$, and thus $\alpha^*\Omega_t$ gives a $\b{\p}$-closed extension of $\Omega$ on the family $\mc{F}'|_{\alpha^{-1}(\Delta\cap\alpha(\Delta'))}=(X'_t,D'_t)$, $t\in \alpha^{-1}(\Delta\cap\alpha(\Delta'))$. Similar reductions to this can be found in \cite[Subsection 2.3]{RZ15} and the beginning of \cite[Section 2]{RwZ}.

Let
$$
	\varphi:=\varphi(t)\in \Gamma_{\text{real analytic}}(X, T_X(-\log D)\otimes \Lambda^{0,1}T^*X)\subset A^{0,1}(X, T_X(-\log D))
$$
 be the Beltrami differential, which satisfies  the integrability condition (\ref{integrable}) and  gives the semi-universal family $\mathscr{F}=(X_t, D_t)$, $t\in S$.

By a direct calculation, the contraction map
$$\label{varphi}
  \varphi\l :=i_{\varphi}: A^{0,q}(X,\Omega^p_X(\log D))\to A^{0,q+1}(X, \Omega^{p-1}_X(\log D))
$$
is well-defined.
 As in \cite{Liu,RZ15}, one may define the operator
$$
	e^{i_\varphi}:=\sum_{k=0}^{\infty}\frac{1}{k!}i^k_{\varphi},
$$
where $i^k_{\varphi}=\underbrace{i_{\varphi}\circ\cdots\circ i_{\varphi}}_k$. Notice that the above summation should be finite
due to the dimension assumption.
\begin{prop}\label{prop2} With the above setting, the operator
$$
		e^{i_\varphi}: A^{0,q}(X,\Omega^n_X(\log D))\to A^{0,q}(X_t,\Omega^n_{X_t}(\log D_t))
$$
	is a linear isomorphism as $|t-s_0|$ is small.
\end{prop}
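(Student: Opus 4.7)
My approach is to split the proof into two steps: first verify that $e^{i_\varphi}$ really maps $A^{0,q}(X,\Omega^n_X(\log D))$ into $A^{0,q}(X_t,\Omega^n_{X_t}(\log D_t))$, and then establish that it is bijective for small $|t-s_0|$. Linearity is immediate from the definition. Because $i_\varphi$ strictly lowers the holomorphic degree of a form, the series defining $e^{i_\varphi}$ terminates after at most $n+1$ terms on any logarithmic $(n,q)$-form, so the whole analysis reduces to a pointwise exterior-algebra computation with polynomials in $i_\varphi$ of degree at most $n$.

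For well-definedness, I would fix a logarithmic coordinate system $(U,\{z^1,\dots,z^n\})$ with $D\cap U=\{z^1\cdots z^k=0\}$. A section of $T_X(-\log D)$ has the local form
$$\varphi=\sum_{i\le k}\psi^i_{\bar j}\,z^i\tfrac{\p}{\p z^i}\otimes d\b z^j+\sum_{i>k}\psi^i_{\bar j}\,\tfrac{\p}{\p z^i}\otimes d\b z^j,$$
so contraction with the logarithmic factor $dz^i/z^i$ ($i\le k$) produces the smooth term $\psi^i_{\bar j}\,d\b z^j$, the $z^i$ inside $\varphi$ cancelling the pole. Thus each $i_\varphi^m\Omega$ remains logarithmic along $D$, and so does $e^{i_\varphi}\Omega$. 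To interpret the result on the deformed manifold, I would adapt the Clemens-Liu-Rao-Yang calculation from the smooth case: $\mb{I}+i_\varphi$ sends each $dz^i$ to a $(1,0)$-cotangent vector on $X_t$, and combined with the identity $e^{i_\varphi}(\alpha\wedge\b\gamma)=e^{i_\varphi}\alpha\wedge\b\gamma$ for a purely antiholomorphic $\b\gamma$, one reads off that $e^{i_\varphi}\Omega$ is the wedge of $n$ such $(1,0)_t$-forms with an antiholomorphic $q$-form, hence of pure bidegree $(n,q)$ on $X_t$. Since the divisor $D_t\subset X_t$ is cut out by the same local functions $z^i$, the expression $dz_t^i/z^i$ is a bona fide logarithmic differential along $D_t$, placing $e^{i_\varphi}\Omega$ in $A^{0,q}(X_t,\Omega^n_{X_t}(\log D_t))$.

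For bijectivity, I would observe that $i_\varphi$ is pointwise nilpotent on the vector bundle underlying $\Omega^n_X(\log D)\otimes\wedge^q T^{*(0,1)}X$, so the identity $e^{i_\varphi}\circ e^{-i_\varphi}=\mb{I}$ makes sense as a finite power-series identity and exhibits $e^{-i_\varphi}$ as a two-sided inverse between the bundles underlying $A^{0,q}(X_t,\Omega^n_{X_t}(\log D_t))$ and $A^{0,q}(X,\Omega^n_X(\log D))$. Because $-\varphi$ is again logarithmic along $D$, the same pole-cancellation calculation as above shows $e^{-i_\varphi}$ preserves the logarithmic structure in the reverse direction. Hence $e^{i_\varphi}$ is a pointwise linear isomorphism of smooth vector bundles and therefore induces a linear isomorphism on smooth sections, valid for any $t$ in the small neighborhood where $(X_t,D_t)$ is defined.

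The main obstacle, and the step requiring the most care, is the interpretation phase of well-definedness: one must verify that the formally constructed $e^{i_\varphi}\Omega$ actually lies in $\Omega^n_{X_t}(\log D_t)$ for the deformed complex structure on $X_t$, and not merely in some larger sheaf of meromorphic forms. This forces a careful coordination of the Clemens-type identification of $(1,0)_t$-forms with the pole-cancellation computation in the original $z^i$ coordinates. This is exactly where the hypothesis $\varphi\in T_X(-\log D)$ (as opposed to an arbitrary element of $T_X$) is essential: only a logarithmic Beltrami differential keeps the divisor $D$ stable through the deformation, so that $z^1\cdots z^k=0$ continues to cut out $D_t$ locally and the whole calculation is self-consistent.
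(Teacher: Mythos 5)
Your overall strategy (reduce to the smooth-case isomorphism $e^{i_\varphi}\colon A^{n,q}(X)\to A^{n,q}(X_t)$ and then check that logarithmic poles are preserved in both directions) is the same as the paper's, and your central-fiber computation -- that contracting $dz^i/z^i$ against the $z^i\frac{\p}{\p z^i}$ factors of a logarithmic $\varphi$ cancels the poles, so each $i_\varphi^m\Omega$ stays in $A^{0,q}(X,\Omega^{n-m}_X(\log D))$ -- is correct. But there is a genuine gap exactly at the step you yourself flag as the main obstacle. You assert that ``the divisor $D_t\subset X_t$ is cut out by the same local functions $z^i$,'' and conclude that $dz^i_t/z^i$ is a bona fide logarithmic differential on $X_t$. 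This conflates the set-theoretic statement (the underlying set of $D_t$ is $\{z^1\cdots z^k=0\}$, which is true by the definition of a logarithmic deformation) with the holomorphic one: $z^i$ is \emph{not} a holomorphic function for the deformed complex structure, so it does not define the divisor $D_t$ in the sense needed for membership in $\Omega^n_{X_t}(\log D_t)$. What must be shown is that if $R_t$ is a genuine $t$-holomorphic local defining function of $D_t$, then $R_t/(z^1\cdots z^k)$ is smooth and nowhere vanishing for small $|t-s_0|$; only then does $\frac{1}{z^1\cdots z^k}\,(\text{smooth }(n,q)_t\text{-form})$ lie in $A^{0,q}(X_t,\Omega^n_{X_t}(\log D_t))$ rather than in some larger sheaf of forms with real-analytic singular denominators. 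The paper proves this comparison via Kawamata's lemma: since $\varphi$ is logarithmic, $\underline{D}\times S$ is analytic for the new structure, so $R_{s_0}(\zeta(z,s_0))\,\epsilon_t(z)$ is $t$-holomorphic for a nonvanishing real-analytic $\epsilon_t$, and dividing by $R_t$ yields a nonvanishing $t$-holomorphic quotient $h(\zeta,t)$, whence $R_t/R_{s_0}=\epsilon_t/h$ is smooth and nonzero. Without this input your argument only places $e^{i_\varphi}\Omega$ in the meromorphic-type sheaf you were worried about.

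A secondary, smaller issue: for surjectivity you invoke $e^{-i_\varphi}$ as a two-sided inverse and claim ``the same pole-cancellation calculation'' applies in reverse. But $-\varphi$ is a $T_X$-valued $(0,1)$-form for the \emph{central} structure, while a logarithmic form on $X_t$ is naturally written with poles $dR_t/R_t$ in $t$-holomorphic coordinates; transporting the reverse computation again requires the $R_t$ versus $R_{s_0}$ comparison above (after which it does go through, since one can write any element of $A^{0,q}(X_t,\Omega^n_{X_t}(\log D_t))$ as $R_{s_0}^{-1}$ times a smooth $(n,q)_t$-form and apply the known smooth-level bijectivity). So the missing ingredient is the same in both directions: a justification that the central-fiber defining function and the deformed one differ by a smooth nonvanishing factor.
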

\begin{proof}
Since $e^{i_\varphi}: A^{n,q}(X)\to A^{n,q}(X_t)$ is a linear isomorphism  as $|t-s_0|$ is small, it suffices to prove that $e^{i_\varphi}(\alpha)$ has logarithmic poles along $D_t$ for any $\alpha\in A^{0,q}(X,\Omega^n_X(\log D))$.

Let $\underline{X}$ and $\underline{D}$ be the (union of) the underlying real analytic manifolds of $X$ and $D$.
Choose any point $p$ in $\underline X$. Set $\zeta=\zeta(z,t)$ as a local holomorphic coordinate system of $X_t$ around $p$ induced by the family
and
$$R_t(\zeta(z,t)), t\in S$$
 as a local defining function of $D_t$ around $p$. By (the proof of) \cite[Lemma 1]{kawa},
since $\varphi$ is logarithmic along $D$ and thus $\underline{D}\times S$ is analytic with respect to the complex structure
given by $\varphi$, there exists a real analytic function
$$\epsilon_{t}(z)$$
 around $p$ parameterized by $S$ such that $\epsilon_{s_0}(z)\neq 0$ and
 $$R_{s_0}(\zeta(z,{s_0}))\epsilon_t(z)$$ is a local holomorphic function with respect to the complex structure
given by $\varphi$ on $X_t$. As $R_{s_0}(\zeta(z,{s_0}))\epsilon_{t}(z)$ vanishes on the union  $\underline{D}$ of underlying real analytic manifolds,
there exists some local holomorphic function $h(\zeta,t)$ on $X_t$ such that
\begin{align}\label{2.23}
	R_{s_0}(\zeta(z,{s_0}))\epsilon_{t}(z)=h(\zeta,t)R_t(\zeta(z,t)). 	
\end{align}
Since $h(\zeta,s_0)=\epsilon_{s_0}(z)\neq 0$ on a (possibly smaller) neighborhood of $p$ by \cite[Theorem 6.6 of Chapter II]{Dem}, $h(\zeta,t)$ has no zero points around $p$  as $|t-s_0|$ is small. From  (\ref{2.23}), one has
\begin{align}\label{2.24}
	\frac{R_t(\zeta(z,t))}{R_{s_0}(\zeta(z,{s_0}))}=\frac{\epsilon_{t}(z)}{h(\zeta,t)},
\end{align}
which is smooth and has no zero points around $p$ and $s_0$.

Now any $\alpha\in A^{0,q}(X,\Omega^n_X(\log D))$ is locally written as
$$\alpha=\frac{\alpha_1}{R_{s_0}(\zeta(z,{s_0}))}$$
with some local smooth $(n,q)$-form $\alpha_1$ on $X$.
So
\begin{align*}
e^{i_\varphi}(\alpha)&=\frac{1}{R_{s_0}(\zeta(z,{s_0}))}e^{i_\varphi}( \alpha_1)=\frac{R_t(\zeta(z,t))}{R_{s_0}(\zeta(z,{s_0}))}\left(\frac{1}{R_t(\zeta(z,t))}e^{i_\varphi}(\alpha_1)\right)
\end{align*}
lies in $A^{0,q}(X_t, \Omega^n_{X_t}(\log D_t))$ by (\ref{2.24}) and that $e^{i_\varphi}(\alpha_1)$ is a local smooth $(n,q)$-form on $X_t$.
\end{proof}

Without loss of generality, we may assume that $S=\Delta$ is a small disc and $s_0=0$.
Let $\Omega\in A^{0,q}(X,\Omega^n(\log D))$ be a $\b{\p}$-closed logarithmic $(n,q)$-form on $X$.
In order to find a smooth $\b{\p}$-closed extension of $\Omega$, we only need to find a real analytic $\Omega(t)\in A^{0,q}(X,\Omega^n(\log D))$  such that
\begin{align}\label{2.25}
\b{\p}_t (e^{i_\varphi}(\Omega(t)))=0,\quad \Omega(0)=\Omega,	
\end{align}
where $\b{\p}_t$ denotes the $\b{\p}$-operator with respect to the complex structure of $X_t$. By Proposition \ref{prop2}, $e^{i_\varphi}(\Omega(t))\in A^{0,q}(X_t,\Omega_t^n(\log D_t))$ is a smooth extension of $\Omega$ with $(e^{i_\varphi}(\Omega(t)))(0)=\Omega$ and thus the difficulty here lies in $\b{\p}$-closedness of \eqref{2.25}.

 From \cite[Proposition 5.1]{Liu} or \cite[(2.14)]{RwZ}, (\ref{2.25}) is equivalent to
 \begin{align}\label{2.30}
 \b{\p}\Omega(t)+\p(\varphi\l\Omega(t))=0,\quad \Omega(0)=0.	
 \end{align}

We shall solve the equation \eqref{2.30} by an iteration method originally from \cite{LSY} and developed in \cite{Liu,RZ,RZ2, RZ15, RwZ}.
 To study the equation \eqref{2.30}, we need a logarithmic analogue of the Tian-Todorov lemma \cite{T87,To89}.
 \begin{lemma}\label{TianTodorov}
 	For any $\varphi, \psi\in A^{0,1}(X, T_X(-\log D))$ and $\Omega\in A^{0,q}(X,\Omega^n_X(\log D))$, we have
$$
 	[\varphi,\psi]\l \Omega=-\p(\psi\l(\varphi\l\Omega))+\psi\l\p(\varphi\l\Omega)+\varphi\l\p(\psi\l\Omega).	
$$
 \end{lemma}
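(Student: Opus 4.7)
My strategy is to reduce the identity to the classical Tian--Todorov lemma via a restriction-and-density argument on the open dense subset $U = X - D$.

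First I would verify that each of the four terms $[\v, \psi]\l \Omega$, $\p(\psi \l (\v \l \Omega))$, $\psi \l \p(\v \l \Omega)$, and $\v \l \p(\psi \l \Omega)$ defines a \emph{smooth} section of $\Omega^{n-1}_X(\log D) \otimes \Lambda^{0, q+2}\o{T^*X}$ on $X$. This rests on three closure properties: (i) the contraction pairing $T_X(-\log D) \otimes \Omega^p_X(\log D) \to \Omega^{p-1}_X(\log D)$ is a morphism of locally free $\mc{O}_X$-modules, as one checks directly from the dual local frames $\{z^i \p/\p z^i, \p/\p z^j\}$ and $\{dz^i/z^i, dz^j\}$; (ii) $\p$ sends $A^{0, q}(X, \Omega^p_X(\log D))$ to $A^{0, q}(X, \Omega^{p+1}_X(\log D))$, since $\p(dz^i/z^i) = 0$ in the local description (\ref{1.6}); (iii) the Lie bracket of two logarithmic vector fields is logarithmic (\cite{Saito}), so $[\v, \psi] \in A^{0, 2}(X, T_X(-\log D))$.

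Next I would restrict the identity to $U = X - D$. There $\Omega|_U$ is an ordinary smooth $(n, q)$-form, and $\v|_U, \psi|_U$ are smooth $(0, 1)$-forms with values in $T_U$. The classical Tian--Todorov formula (cf.~\cite{T87, To89}, and the versions used in \cite{Liu, RZ15, RwZ}) applied to the top-degree form $\Omega|_U$ and the Beltrami differentials $\v|_U, \psi|_U$ yields precisely the claimed identity on $U$. Since both sides are smooth sections of the same logarithmic bundle by the previous step, and they agree on the dense open $U$---the coefficients in any local logarithmic frame being smooth functions determined by their values on $U$---they coincide on all of $X$.

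The main obstacle is the first step: a priori $\p(\v \l \Omega)$ or $\p(\psi \l \v \l \Omega)$ could develop higher-than-logarithmic singularities along $D$, and the closure of $\Omega^\bullet_X(\log D)$ under $\p$ and under contraction with $T_X(-\log D)$ is exactly what rules this out. As a cleaner alternative that bypasses the density step, one may compute directly in a logarithmic coordinate system: trivialize $\Omega^n_X(\log D)$ by the $\p$-closed local frame $\m_0 = (dz^1/z^1) \w \cdots \w (dz^k/z^k) \w dz^{k+1} \w \cdots \w dz^n$, write $\Omega = \eta \w \m_0$ for a smooth $(0,q)$-form $\eta$, expand $\v, \psi$ in the dual frame $\{z^i \p/\p z^i, \p/\p z^j\}$, and verify the identity term-by-term via the Leibniz rules for $\p$ and $\l$; this reduces to the classical Tian--Todorov identity applied to $\m_0$, which is formally the same computation as in the smooth case since $\m_0$ is $\p$-closed.
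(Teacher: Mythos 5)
Your proposal is correct, and your ``cleaner alternative'' at the end is in substance the paper's own proof: the authors dispose of the lemma in one line by observing that it is a local identity which reduces, by the same direct computation as in the smooth case (\cite[Proposition 3.2]{Liu1}), to the classical Tian--Todorov formula, the only new input being that the logarithmic frame elements $dz^i/z^i$ are $\partial$-closed. Your primary route---restricting to $U=X-D$, invoking the classical identity there, and extending by density of $U$---is a genuinely different packaging of the same content. What it buys is that it isolates explicitly the three closure properties that make the statement meaningful in the logarithmic category (stability of $\Omega^\bullet_X(\log D)$ under contraction with $T_X(-\log D)$ and under $\partial$, and stability of $T_X(-\log D)$ under the Lie bracket), which the paper leaves implicit; once all four terms are known to be smooth sections of $\Omega^{n-1}_X(\log D)\otimes\Lambda^{0,q+2}\overline{T^*X}$, agreement on the dense open set $U$ does force agreement everywhere. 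Both arguments also use silently that the extra term $\psi\lrcorner(\varphi\lrcorner\partial\Omega)$ of the general Tian--Todorov identity vanishes for degree reasons, since $\Omega^{n+1}_X(\log D)=0$. The paper's computation is shorter; your density argument is equally rigorous and arguably more transparent about where the logarithmic structure enters.
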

\begin{proof}
Comparing this formula with \cite[Proposition 3.2]{Liu1}, one just needs to notice that this is a local formula from direct local computations, and for each $i=1,\cdots,r$,
$$\p\left(\frac{dz^i}{z^i}\right)=0.$$
\end{proof}

 Assuming that $\alpha(t)$ is a power
series of bundle-valued or logarithmic $(p,q)$-forms, expanded as
  $$\alpha(t)=\sum_{k=0}^{\infty}\sum_{i+j=k}\alpha_{i,j}t^i\b{t}^j,$$
one uses the notation
\[ \begin{cases}
\alpha(t) = \sum^{\infty}_{k=0} \alpha_k, \\[4pt]
\alpha_k = \sum_{i+j=k} \alpha_{i,j}t^i \overline{t}^j, \\
\end{cases} \]
where $\alpha_k$ is the $k$-degree homogeneous part in the expansion
of $\alpha(t)$ and all $\alpha_{i,j}$ are  bundle-valued or logarithmic
$(p,q)$-forms on $X_0$ with $\alpha(0)=\alpha_{0,0}$. Thus,  one will adopt the
notations
\begin{align}\label{2.26}
	\Omega(t)=\sum_{k=0}^{\infty}\Omega_k,\quad \varphi=\sum_{k=0}^{\infty}\varphi_k
\end{align}
with $\Omega_0=\Omega$, $\varphi_0=0$ and the integrability condition
$$
\b{\p}\varphi_k=\frac{1}{2}\sum_{i+j=k}[\varphi_i,\varphi_j].	
$$
 Substituting (\ref{2.26}) into (\ref{2.30}), one has
\begin{align}\label{2.27}
\b{\p}\Omega_k+\p\sum_{i+j=k}(\varphi_i\l \Omega_j)=0,\quad k\geq 0.	
\end{align}

Now we will solve the system of equations (\ref{2.27}) by induction. The step of $k=0$ is solved by $\Omega_0=\Omega$. Now assume that for all $k\leq l$, we have constructed $$\Omega_k\in A^{0,q}(X,\Omega^n_X(\log D)),$$
which satisfies (\ref{2.27}).

For $k=l+1$, one has
\begin{align}\label{3.8}
  \begin{split}
    &\quad-\b{\p}\sum_{i+j=l+1}\p(\varphi_i\l\Omega_j)\\
    &=\p\left(\sum_{i=1}^{l+1}\b{\p}\varphi_i\l\Omega_{l+1-i}+\sum_{i=1}^{l+1}\varphi_i\l\b{\p}\Omega_{l+1-i}\right)\\
    &=\p\left(\frac{1}{2}\sum_{i=1}^{l+1}\sum_{j=1}^{i-1}[\varphi_i,\varphi_{i-j}]\l\Omega_{l+1-i}-\sum_{i=1}^{l+1}\varphi_i\l\p\left(\sum_{j=1}^{l+1-i}\varphi_j\l\Omega_{l+1-i-j}\right)\right)\\
    &=\p\left(\frac{1}{2}\sum_{i=1}^{l+1}\sum_{j=1}^{i-1}\left(-\p(\varphi_j\l(\varphi_{i-j}\l\Omega_{l+1-i}))-\varphi_j\l(\varphi_{i-j}\l\p\Omega_{l+1-i})+\varphi_j\l\p(\varphi_{i-j}\l\Omega_{l+1-i})\right.\right.\\
    &\quad\quad\left.\left.+\varphi_{i-j}\l\p(\varphi_j\l\Omega_{l+1-i})\right)-\sum_{i=1}^{l+1}\varphi_i\l\p\left(\sum_{j=1}^{l+1-i}\varphi_j\l\Omega_{l+1-i-j}\right)\right)\\
    &=\p\left(\sum_{1\leq j<i\leq l+1}\varphi_j\l\p(\varphi_{i-j}\l\Omega_{l+1-i})-\sum_{i=1}^{l+1}\varphi_i\l\p\left(\sum_{j=1}^{l+1-i}\varphi_j\l\Omega_{l+1-i-j}\right)\right)\\
    &=0,
    \end{split}
  \end{align}
where the third equality follows from Lemma \ref{TianTodorov}.

By Theorem \ref{thm2} and (\ref{gamma}), there is a solution
\begin{align}\label{3.9}
	\Omega_{l+1}=-\mathcal{I}^{-1} \b{\p}_E^*\mb{G}''_E \mathcal{I}\left(\p\sum_{i+j=l+1}\varphi_i\l \Omega_j\right),
\end{align}
where $E:=\Omega^n(\log D)=K_X\otimes [D]$ and $ \b{\p}_E^*$ is the adjoint operator of $\b{\p}$ with respect to $(X,\omega)$ and $(E,h)$ for some Hermitian metric $h$. So
\begin{align*}
	\Omega(t)&=\Omega+\sum_{l=0}^{+\infty}\Omega_{l+1}
	=\Omega-\mathcal{I}^{-1} \b{\p}_E^*\mb{G}''_E \mathcal{I}\p\left(\varphi\l \Omega(t)\right),
\end{align*}
which is formally equivalent to
\begin{align}\label{formal solution}
\Omega(t)=\left(I+\mathcal{I}^{-1} \b{\p}_E^*\mb{G}''_E \mathcal{I}\p i_{\varphi}\right)^{-1}\Omega. 	
\end{align}

Next we will prove the formal solution (\ref{formal solution}) is smooth actually, which is a little more general than the result in \cite[Subsection 3.2]{RwZ}. Applying $\mathcal{I}$ to both sides of (\ref{formal solution}), one has
$$
\mathcal{I}(\Omega(t))=(I+ \b{\p}_E^*\mb{G}''_E\mathcal{I} \p i_{\varphi} \mathcal{I}^{-1})^{-1}\mathcal{I}(\Omega).	
$$

Fixing an integer $k\geq 2$ and a real $\alpha\in (0,1)$, we denote by $\|\cdot\|'_{k,\alpha}, \|\cdot\|_{k,\alpha}$ the H\"older norms of bundle-valued on $X$, $X\times \Delta$, respectively (cf. \cite[p. 275]{k}).

 For any $\eta(t)\in A^{0,q}(X\times \Delta,E)$, when restricted to each fiber of $\pi:X\times \Delta\to \Delta$, one has
\begin{align}\label{estimate}
\begin{split}
\| \b{\p}_E^*\mb{G}''_E\mathcal{I} \p i_{\varphi} \mathcal{I}^{-1}\eta(t)\|'_{k,\alpha}\leq C_{k,\alpha}\|\mathcal{I} \p i_{\varphi} \mathcal{I}^{-1}\eta(t)\|'_{k-1,\alpha}\leq C_{k,\alpha}C_1\|\mathcal{I} i_{\varphi}\mathcal{I}^{-1}\eta(t)\|'_{k,\alpha},
\end{split}
\end{align}
where the second inequality follows from \cite[Proposition 2.3]{MK}. Without loss of generality, we assume that $X\times \Delta=\cup_{i=1}^N U_i\times \Delta$, or one may shrink the $\Delta$ slightly.  Note that $ \b{\p}_E^*\mb{G}''_E\mathcal{I} \p\mathcal{I}^{-1}$ is an operator independent of $t$. By the definition of H\"older norm and (\ref{estimate}), one has
\begin{align}\label{estimate1}
\begin{split}
	\| \b{\p}_E^*\mb{G}''_E\mathcal{I} \p i_{\varphi} \mathcal{I}^{-1}\eta(t)\|_{k,\alpha}&=\| \b{\p}_E^*\mb{G}''_E\mathcal{I} \p \mathcal{I}^{-1}\circ \mathcal{I} i_{\varphi} \mathcal{I}^{-1}\eta(t)\|_{k,\alpha}\\
	&\leq C\sup_t\left(\sum_{h_1+h_2\leq k}\| \b{\p}_E^*\mb{G}''_E\mathcal{I} \p \mathcal{I}^{-1}\circ D^{h_1}_t\mathcal{I} i_{\varphi} \mathcal{I}^{-1}\eta(t)\|'_{h_2,\alpha}\right)\\
	&\leq C\sup_t\left(\sum_{h_1+h_2\leq k}\|D^{h_1}_t\mathcal{I} i_{\varphi} \mathcal{I}^{-1}\eta(t)\|'_{h_2,\alpha}\right)\\
	&\leq C\|\mathcal{I} i_{\varphi} \mathcal{I}^{-1}\eta(t)\|_{k,\alpha}\\
	&\leq C\|\varphi\|_{k,\alpha}\|\eta(t)\|_{k,\alpha},
\end{split}
\end{align}
where  $C=C(k,\alpha)$  is a constant  independent of $t$ and may change from line to line, and $D^{h_1}$ denotes the $h_1$-th differential operator on $t$-direction.

Thus, there is a constant $c_{k,\alpha}$ such that $C\|\varphi\|_{k,\alpha}\leq 1/2$ for $|t|<c_{k,\alpha}$.
From the estimate (\ref{estimate1}), one has
\begin{align}
\begin{split}
	\|\mathcal{I}(\Omega(t))\|_{k,\alpha}
	&\leq 2\left(\|\mathcal{I}(\Omega(t))\|_{k,\alpha}
	  -\| \b{\p}_E^*\mb{G}''_E\mathcal{I} \p i_{\varphi} \mathcal{I}^{-1}(\mathcal{I}(\Omega(t)))\|_{k,\alpha}\right)\\
	&\leq 2\|(I+ \b{\p}_E^*\mb{G}''_E\mathcal{I} \p i_{\varphi} \mathcal{I}^{-1})\mathcal{I}(\Omega(t))\|_{k,\alpha}\\
	&=2\|\mathcal{I}(\Omega)\|_{k,\alpha}.
	\end{split}
\end{align}
So $$\mathcal{I}(\Omega(t))=(I+ \b{\p}_E^*\mb{G}''_E\mathcal{I} \p i_{\varphi} \mathcal{I}^{-1})^{-1}\mathcal{I}(\Omega)$$ is convergent under $C^{k,\alpha}$-norm for $|t|<c_{k,\alpha}$.
Moreover,
\begin{align*}\label{2.31}
	&\quad \frac{\p^2}{\p t\p\b{t}}\mathcal{I}(\Omega(t))+\Delta^E_{\b{\p}}\mathcal{I}(\Omega(t))\\
&=\b{\p} \b{\p}_E^*\mathcal{I}(\Omega)- \b{\p}_E^*\mathcal{I} \p i_{\varphi}\Omega(t)- \b{\p}_E^*\mb{G}''_E\mathcal{I}\p\left(i_{\frac{\p \varphi}{\p t}}\frac{\p \Omega(t)}{\p \b{t}}+i_{\frac{\p \varphi}{\p \b{t}}}\frac{\p \Omega(t)}{\p t}+i_{\frac{\p^2 \varphi}{\p t\p\b{t}}}\Omega(t)+i_{\varphi}\frac{\p^2 \Omega(t)}{\p t\p \b{t}}\right).
\end{align*}
Since $\varphi(0)=0$, there exists a smaller uniform upper bound $c<c_{k,\alpha}$ such that the equation (\ref{2.31}) is a uniform fully nonlinear elliptic equation on $X\times \{|t|<c\}$. By \cite[Theorem 4.6]{Taylor}, $\mathcal{I}(\Omega(t))$ is smooth on $X\times \{|t|<c\}$ and so is $\Omega(t)$.

In summary, we have proved:
\begin{thm}\label{thm3}
Let $(X,\omega)$ be a compact K\"ahler manifold and $D$ be a simple normal crossing divisor on it. For any logarithmic deformations $(X_t,D_t), t\in S$ of pair $(X,D)$ with $X_{0}=X$, induced by $\varphi:=\varphi(t)\in A^{0,1}(X, T_X(-\log D))$,  and any $\b{\p}$-closed logarithmic $(n,q)$-form $\Omega$ on the central fiber $X$, there exists a small neighborhood $\Delta\subset S$ of $0$ and a smooth family $\Omega(t)$ of logarithmic $(n,q)$-form on the central fiber $X$, such that
\begin{align}
e^{i_\varphi}(\Omega(t))=e^{i_\varphi}\left(\left(I+\mathcal{I}^{-1} \b{\p}_E^*\mb{G}''_E \mathcal{I}\p i_{\varphi}\right)^{-1}\Omega\right)\in A^{0,q}(X_t,\Omega^n_{X_t}(\log D_t)),
\end{align}
which is $\b{\p}_t$-closed on $X_t$ for any $t\in \Delta$, and with $(e^{i_\varphi}(\Omega(t)))(0)=\Omega$.

\end{thm}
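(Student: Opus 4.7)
The plan is to reduce the $\bar{\partial}_t$-closedness equation on $X_t$ to a single equation on the central fiber $X$ and then solve it by a power series iteration, using Theorem \ref{thm2} as the engine at each order and Hölder estimates for convergence. Concretely, Proposition \ref{prop2} tells us that the map $e^{i_\varphi}$ sends logarithmic $(n,q)$-forms on $X$ to logarithmic $(n,q)$-forms on $X_t$ as long as $|t|$ is small, so it suffices to produce a real-analytic family $\Omega(t)\in A^{0,q}(X,\Omega^n_X(\log D))$ with $\Omega(0)=\Omega$ such that $\bar{\partial}_t(e^{i_\varphi}(\Omega(t)))=0$. By the standard identity recalled in the excerpt (coming from \cite{Liu,RwZ}), this is equivalent to solving
\begin{equation*}
\bar{\partial}\Omega(t)+\partial(\varphi\lrcorner\,\Omega(t))=0,\qquad \Omega(0)=\Omega.
\end{equation*}

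I would expand $\Omega(t)=\sum_{k\ge 0}\Omega_k$ and $\varphi=\sum_{k\ge 1}\varphi_k$ in homogeneous pieces of degree $k$ in $(t,\bar{t})$, and solve the induced hierarchy
\begin{equation*}
\bar{\partial}\Omega_k+\partial\!\sum_{i+j=k}\varphi_i\lrcorner\,\Omega_j=0
\end{equation*}
by induction on $k$, starting from $\Omega_0=\Omega$. At step $k=l+1$, assuming the equation has been solved for all lower orders, I would apply $\bar{\partial}$ to $\sum_{i+j=l+1}\partial(\varphi_i\lrcorner\,\Omega_j)$, rewrite $\bar{\partial}\varphi_i$ via the integrability condition $\bar{\partial}\varphi_i=\tfrac12\sum_{a+b=i}[\varphi_a,\varphi_b]$, and expand the Lie bracket terms using the logarithmic Tian–Todorov identity of Lemma \ref{TianTodorov}. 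A symmetry-in-indices cancellation, identical in form to \eqref{3.8}, then yields
\begin{equation*}
\bar{\partial}\!\left(\partial\!\sum_{i+j=l+1}\varphi_i\lrcorner\,\Omega_j\right)=0.
\end{equation*}
This $\bar{\partial}\partial$-closedness is precisely the hypothesis needed to invoke Theorem \ref{thm2}, which produces the explicit solution
\begin{equation*}
\Omega_{l+1}=-\,\mathcal{I}^{-1}\bar{\partial}_E^{\,*}\mathbb{G}''_E\mathcal{I}\!\left(\partial\!\sum_{i+j=l+1}\varphi_i\lrcorner\,\Omega_j\right),
\end{equation*}
as in formula \eqref{gamma}, where $E=\Omega^n_X(\log D)$.

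Summing the recursion gives the formal identity $\Omega(t)=\bigl(I+\mathcal{I}^{-1}\bar{\partial}_E^{\,*}\mathbb{G}''_E\mathcal{I}\partial\, i_\varphi\bigr)^{-1}\Omega$. To upgrade this formal series to an actual smooth family, I would conjugate by $\mathcal{I}$ and work on the bundle-valued side, where $\bar{\partial}_E^{\,*}\mathbb{G}''_E\mathcal{I}\partial\mathcal{I}^{-1}$ is a fiberwise-elliptic operator of order zero in the $C^{k,\alpha}$ scale. Combining the interior Hölder estimate $\|\bar{\partial}_E^{\,*}\mathbb{G}''_E\eta\|'_{k,\alpha}\le C_{k,\alpha}\|\eta\|'_{k-1,\alpha}$ of \cite{MK} with the bound $\|i_\varphi \eta\|_{k,\alpha}\le C\|\varphi\|_{k,\alpha}\|\eta\|_{k,\alpha}$, mimicking the chain of inequalities in \eqref{estimate1}, yields an estimate of the form $\|T\eta\|_{k,\alpha}\le C\|\varphi\|_{k,\alpha}\|\eta\|_{k,\alpha}$ where $T=\bar{\partial}_E^{\,*}\mathbb{G}''_E\mathcal{I}\partial i_\varphi\mathcal{I}^{-1}$. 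For $|t|$ sufficiently small the operator norm of $T$ on $C^{k,\alpha}$ is $\le 1/2$, so the Neumann series converges, producing a $C^{k,\alpha}$ solution $\Omega(t)$ on $X\times\Delta$ with $\|\mathcal{I}(\Omega(t))\|_{k,\alpha}\le 2\|\mathcal{I}(\Omega)\|_{k,\alpha}$.

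Finally, for the smoothness I would observe that $\mathcal{I}(\Omega(t))$ satisfies an equation of the form $\tfrac{\partial^2}{\partial t\,\partial\bar{t}}\mathcal{I}(\Omega(t))+\Delta^E_{\bar{\partial}}\mathcal{I}(\Omega(t))=F(t,\Omega(t),\partial_t\Omega(t),\partial_{\bar t}\Omega(t))$ as in the displayed equation preceding the theorem, which is uniformly fully nonlinear elliptic near $t=0$ because $\varphi(0)=0$; elliptic bootstrapping (for instance via \cite[Theorem 4.6]{Taylor}) then gives smoothness of $\Omega(t)$ on $X\times\{|t|<c\}$, and Proposition \ref{prop2} places $e^{i_\varphi}(\Omega(t))$ in $A^{0,q}(X_t,\Omega^n_{X_t}(\log D_t))$. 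I expect the main obstacle to be step two: verifying at each order that the right-hand side lies in $\mathrm{Im}(\partial)\cap\ker\bar{\partial}$ within the logarithmic complex, since this is precisely where the logarithmic Tian–Todorov calculus and the new $\partial\bar{\partial}$-lemma for logarithmic forms must cooperate; once this is in place, the convergence and regularity steps are essentially standard.
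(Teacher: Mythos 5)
Your proposal is correct and follows essentially the same route as the paper: reduction via Proposition \ref{prop2} and the equivalence $\b{\p}_t(e^{i_\varphi}\Omega(t))=0 \Leftrightarrow \b{\p}\Omega(t)+\p(\varphi\l\Omega(t))=0$, the power-series hierarchy solved order by order using the integrability condition, the logarithmic Tian--Todorov identity of Lemma \ref{TianTodorov} to verify $\b{\p}\p$-closedness as in \eqref{3.8}, Theorem \ref{thm2} with the explicit solution \eqref{gamma} at each step, H\"older estimates and a Neumann series for convergence, and elliptic bootstrapping for smoothness. No substantive differences from the paper's argument.
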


\begin{rem}
For the case of $D=\emptyset$ and for any $\b{\p}$-closed $\Omega\in A^{n,q}(X)$, one can get the solution (\ref{formal solution}) of the equation (\ref{2.30}) directly without using the expansion (\ref{2.26}) or  iteration formulas
(\ref{3.8}), (\ref{3.9}).

In fact, one only needs to check (\ref{formal solution}) satisfying (\ref{2.30}).
It follows from (\ref{formal solution}) and Lemma \ref{TianTodorov} that
\begin{align*}
\b{\p}\Omega(t)&=-\b{\p}\b{\p}^*\mb{G}\p i_{\varphi}\Omega(t)+\b{\p}\Omega\\
&=-\p i_{\varphi}\Omega(t)-\b{\p}^*\mb{G}\p i_{\varphi}\p i_{\varphi} \Omega(t)-\b{\p}^*\mb{G}\p i_{\varphi} \b{\p}\Omega(t)\\
&=-\p i_{\varphi}\Omega(t)-\b{\p}^*\mb{G}\p i_{\varphi}(\b{\p}+\p i_{\varphi})\Omega(t)\\
&=-\p i_{\varphi}\Omega(t)-\b{\p}^*\mb{G}\p i_{\varphi}(\b{\p}+\p i_{\varphi})\Omega(t).
\end{align*}
Therefore,
$$
(I+\b{\p}^*\mb{G}\p i_{\varphi})(\b{\p}+\p i_{\varphi})\Omega(t)=0.	
$$
By the invertibility of the operator $I+\b{\p}^*\mb{G}\p i_{\varphi}$, one has
$$
(\b{\p}+\p i_{\varphi})\Omega(t)=0.	
$$
\end{rem}

A pair $(X,D)$ is called a \textit{log Calabi-Yau pair} if the logarithmic canonical line bundle $\Omega^n_X(\log D)\cong \mc{O}_X(K_X+D)$ is trivial. By Theorem \ref{thm3}, one has
\begin{cor}\label{CYlog}
The log Calabi-Yau structure is locally stable, i.e., if the reference pair $(X,D)$ is a log Calabi-Yau pair, then there exists a small neighborhood  $\Delta\subset S$ of the reference point such that $(X_t,D_t), t\in \Delta$ is also a log Calabi-Yau pair for any family of logarithmic deformations $\mathscr{F}=(X_t, D_t), t\in S$.
\end{cor}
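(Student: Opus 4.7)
The plan is to reduce this to Theorem \ref{thm3} applied to a nowhere vanishing trivializing section of the log canonical bundle, and then check that the extension stays nowhere vanishing for small $t$. Since $(X,D)$ is log Calabi--Yau, there exists a holomorphic logarithmic $(n,0)$-form $\Omega\in A^{0,0}(X,\Omega^n_X(\log D))$ trivializing $\Omega^n_X(\log D)\cong\mathcal{O}_X(K_X+D)$; in particular $\bar{\partial}\Omega=0$. Apply Theorem \ref{thm3} with $q=0$ to get a small neighborhood $\Delta\subset S$ of $0$ and a smooth family $\Omega(t)\in A^{0,0}(X,\Omega^n_X(\log D))$ such that
$$
\Omega_t:=e^{i_\varphi}\bigl(\Omega(t)\bigr)\in A^{0,0}(X_t,\Omega^n_{X_t}(\log D_t))
$$
is $\bar{\partial}_t$-closed on $X_t$ for every $t\in\Delta$ and satisfies $\Omega_0=\Omega$.

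Since $\Omega_t$ is a $\bar{\partial}_t$-closed logarithmic $(n,0)$-form on $X_t$, it is automatically a holomorphic section of the line bundle $\Omega^n_{X_t}(\log D_t)\cong\mathcal{O}_{X_t}(K_{X_t}+D_t)$. It remains to show that $\Omega_t$ is a nowhere vanishing section for $t$ sufficiently close to $0$. Fix any Hermitian metric on $\Omega^n_X(\log D)$ (equivalently a local frame coming from the logarithmic coordinate systems \eqref{lcs}) and observe that $\Omega_0=\Omega$ has a strictly positive lower bound on the compact manifold $X$ with respect to that frame. The operator $e^{i_\varphi}$ from Proposition \ref{prop2} is a smooth family of isomorphisms between the spaces of logarithmic $(n,0)$-forms on $X$ and on $X_t$, and by Theorem \ref{thm3} the extension $\Omega(t)$ is smooth on $X\times\Delta$, so $\Omega_t$ varies smoothly in $t$ and depends continuously on $t$ in $C^0$-norm. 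Compactness of $X$ together with openness of the nonvanishing locus then gives a possibly smaller disk $\Delta'\subset\Delta$ on which $\Omega_t$ remains nowhere vanishing. Hence $\Omega^n_{X_t}(\log D_t)\cong\mathcal{O}_{X_t}$ for all $t\in\Delta'$, and $(X_t,D_t)$ is log Calabi--Yau.

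The only delicate step is the last one: one must check that ``nonvanishing as a logarithmic $(n,0)$-form'' is an open condition that survives the contraction map $e^{i_\varphi}$. This is the main obstacle and will be handled by working in a fixed finite covering $\{U_i\}$ of $X$ by logarithmic coordinate charts, expressing both $\Omega$ and $\Omega_t$ in terms of the local frame $\tfrac{dz^1}{z^1}\wedge\cdots\wedge\tfrac{dz^k}{z^k}\wedge dz^{k+1}\wedge\cdots\wedge dz^n$, and using that $\varphi(0)=0$ together with the uniform smoothness from the elliptic regularity argument in Theorem \ref{thm3} to conclude that the coefficient of $\Omega_t$ relative to this frame differs from that of $\Omega$ by a $C^0$-small perturbation, hence stays bounded away from zero on each $U_i$ for $|t|$ small. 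No further estimate is needed, since the $\bar{\partial}_t$-closedness and the log-pole structure have already been built into $\Omega_t$ by Theorem \ref{thm3} and Proposition \ref{prop2}.
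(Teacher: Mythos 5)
Your proposal is correct and follows essentially the same route as the paper: apply Theorem \ref{thm3} with $q=0$ to the trivializing section, note that the $\bar{\partial}_t$-closed extension is a holomorphic section of $K_{X_t}\otimes[D_t]$, and use smoothness in $t$ plus compactness of $X$ to see it stays nowhere vanishing for small $t$. The paper phrases the nonvanishing step more tersely via the isomorphism $\mathcal{I}$ identifying logarithmic $(n,0)$-forms with sections of the line bundle $K_X\otimes[D]$, which is exactly the local-frame bookkeeping you describe in your last paragraph.
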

\begin{proof}
	By the definition of log Calabi-Yau structure, let $\mathcal{I}(\Omega)$ be the trivial section of $K_X\otimes [D]$, i.e., there are no zero points for the holomorphic section $\mathcal{I}(\Omega)\in H^0(X,K_X\otimes [D])$. By Theorem \ref{thm3}, the section
	$$
	\mathcal{I}(e^{i_\varphi}(\Omega(t)))\in H^0(X_t,K_{X_t}\otimes [D_t])	
	$$
 also has no zero points for small $t$. Thus, $(X_t, D_t)$ is a log Calabi-Yau pair.
\end{proof}

Denoting the logarithmic Hodge numbers by $h^{p,q}(X,D):=\dim_{\mathbb{C}} H^q(X,\Omega^p_X(\log D))$, we have the invariance of logarithmic Hodge numbers under small logarithmic deformations.
\begin{cor}
	The logarithmic Hodge numbers $h^{p,q}(X_t,D_t)$ are invariant under small logarithmic deformations.
\end{cor}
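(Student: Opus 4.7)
The plan is to combine Deligne's degeneracy theorem (Theorem \ref{Dss}), applied fiberwise, with classical upper semicontinuity of coherent cohomology in families.

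First I will verify that Theorem \ref{Dss} is available for every nearby fiber. For $t$ in a sufficiently small neighborhood $\Delta \subset S$ of $s_0$, the fiber $X_t$ remains Kähler by Kodaira–Spencer's local stability of Kähler structures (reproven in a power-series fashion in \cite{RwZ}), while $D_t$ is a simple normal crossing divisor on $X_t$ by the local-product condition (4) of Definition \ref{kwawa1}. Applying Theorem \ref{Dss} together with \eqref{drhc} to each $(X_t,D_t)$ then yields
\begin{equation*}
\sum_{p+q=k} h^{p,q}(X_t, D_t) = \dim_{\mathbb{C}} \mb{H}^k(X_t, \Omega^*_{X_t}(\log D_t)) = \dim_{\mathbb{C}} H^k(X_t - D_t, \mathbb{C}).
\end{equation*}

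Next I will show the right-hand side is locally constant in $t$. Condition (4) of Definition \ref{kwawa1} makes both $\b{\pi}: \b{\mathscr{X}} \to S$ and its restriction to $\b{\mathscr{D}}$ $C^\infty$-locally trivial over $\Delta$, so the open family $\mathscr{X} = \b{\mathscr{X}} - \b{\mathscr{D}} \to S$ is $C^\infty$-locally trivial as well. Consequently $t \mapsto \dim_{\mathbb{C}} H^k(X_t - D_t, \mathbb{C})$ is locally constant on $\Delta$, and by the displayed identity so is $\sum_{p+q=k} h^{p,q}(X_t, D_t)$.

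The individual summands will then be controlled by semicontinuity. Since $\Omega^p_{\b{\mathscr{X}}/S}(\log \b{\mathscr{D}})$ is a locally free $\mc{O}_{\b{\mathscr{X}}}$-module (cf.\ \cite{Viehweg}) and $\b{\pi}$ is proper, the higher direct image $R^q \b{\pi}_* \Omega^p_{\b{\mathscr{X}}/S}(\log \b{\mathscr{D}})$ is a coherent sheaf on $S$ whose fiber at $t$ computes $H^q(X_t, \Omega^p_{X_t}(\log D_t))$. Grauert's upper semicontinuity theorem gives that $t \mapsto h^{p,q}(X_t, D_t)$ is upper semicontinuous, and then an elementary lemma — a sum of upper semicontinuous integer-valued functions with locally constant total must have each summand locally constant — produces the desired invariance.

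The step I expect to be the main obstacle is the rigorous fiberwise base change for the relative logarithmic de Rham complex, i.e.\ verifying that the fiber of $R^q\b{\pi}_*\Omega^p_{\b{\mathscr{X}}/S}(\log\b{\mathscr{D}})$ at $t$ really does equal $H^q(X_t,\Omega^p_{X_t}(\log D_t))$, so that Grauert's theorem applies cleanly in the logarithmic setting. This is classical in spirit but demands some care with the $S$-flatness of the relative log differentials along $\b{\mathscr{D}}$; once confirmed, the argument above assembles without further difficulty.
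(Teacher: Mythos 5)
Your argument is correct and follows essentially the same route as the paper: fiberwise degeneracy (Corollary \ref{decomposition}) identifies $\sum_{p+q=k}h^{p,q}(X_t,D_t)$ with the topological invariant $\dim_{\mb{C}}H^k(X_t-D_t,\mb{C})$, and upper semicontinuity of each summand then forces termwise constancy. The only difference is cosmetic: the paper obtains semicontinuity from Kodaira--Spencer's theorem for the family of bundles $E^p_t$ associated to the locally free sheaves $\Omega^p_{X_t}(\log D_t)$, whereas you invoke Grauert's theorem for $R^q\b{\pi}_*\Omega^p_{\b{\mathscr{X}}/S}(\log\b{\mathscr{D}})$ — the base-change point you flag is handled by the local product structure in Definition \ref{kwawa1}, and either route works.
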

\begin{proof} Compare with \cite{Del71,Sn}.
	 By Kodaira-Spencer's upper semi-continuity of $h^{p,q}(X_t, D_t)$ (cf. \cite[Theorem 4.3]{MK}),
$$
	h^{p,q}(X_t, D_t)=\dim_{\mathbb{C}} H^{0,q}_{\b{\p}_t}(X_t, E^p_t)\leq \dim_{\mathbb{C}} H^{0,q}_{\b{\p}}(X, E^p)=h^{p,q}(X,D)
$$
 for small $t$. By Corollary \ref{decomposition}, one has
$$
 	\sum_{p+q=k}h^{p,q}(X,D)=\dim_{\mathbb{C}} H^k(X-D,\mb{C})=\sum_{p+q=k}h^{p,q}(X_t,D_t).
$$
 So $h^{n,q}(X, D)=h^{n,q}(X_t, D_t)$ for small $t$.
\end{proof}

In the next two subsections, we will use Theorems $\ref{thm2}$ and \ref{sol-CY} to prove two logarithmic deformation unobstructedness theorems by
Katzarkov-Kontsevich-Pantev \cite{KKP08} and Iacono \cite{Iacono} on K\"ahler manifolds. Our differential geometric proofs are quite different from theirs and have interesting applications to extension problems.
\subsection{Deformations of log Calabi-Yau pairs}

In this subsection, we will prove that the logarithmic deformation of a log Calabi-Yau pair is unobstructed by a purely differential geometric method. Recall that $(X,D)$ is a \emph{log Calabi-Yau pair} if $X$ is a compact K\"ahler manifold and $D$ is a simple normal crossing divisor with trivial $\Omega^n_X(\log D)=K_X\otimes [D]$. For any $[\varphi_1]\in H^{0,1}(X, T_X(-\log D))$ and any $t$ in a small $\epsilon$-disk $D_\epsilon$ of $0$ in $\mathbb{C}^{\dim_{\mathbb{C}} H^{0,1}(X, T_X(-\log D))}$, we try to construct a holomorphic family
$$\varphi:=\varphi(t)\in A^{0,1}(X, T_X(-\log D))$$ satisfying the following integrability and initial conditions:
\begin{align}\label{log 0.1}
\b{\p}\varphi=\frac{1}{2}[\varphi,\varphi],\quad \frac{\p \varphi}{\p t}(0)=\varphi_1.	
\end{align}

To solve the above equation, we need:
\begin{lemma}\label{log lemma2}
	Let $\Omega'\in A^{0,0}(X,\Omega^n_X(\log D))$ be a logarithmic $(n,0)$-form without zero points. Then
	$$
	\bullet\l\Omega': A^{0,1}(X,T_X(-\log D))\to A^{0,1}(X, \Omega^{n-1}_X(\log D))	
	$$
is an isomorphism, whose inverse, defined by $(\ref{log 0.8})$, is
$$
\Omega'^*\l\bullet: A^{0,1}(X, \Omega^{n-1}_X(\log D))\to A^{0,1}(X, T_X(-\log D)).
$$
\end{lemma}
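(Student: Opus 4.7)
The plan is to reduce the statement to a pointwise isomorphism of locally free sheaves, then construct the inverse explicitly using the ``dual'' of $\Omega'$. The key underlying fact is the natural isomorphism of locally free sheaves
\[
T_X(-\log D) \;\cong\; \Omega^{n-1}_X(\log D) \otimes \bigl(\Omega^n_X(\log D)\bigr)^{-1},
\]
which follows from $\Omega^n_X(\log D) = \wedge^n \Omega^1_X(\log D)$ being a line bundle and $T_X(-\log D)$ being the dual of $\Omega^1_X(\log D)$. Since $\Omega'$ is a nowhere-vanishing section of $\Omega^n_X(\log D)$, it trivializes this line bundle, so contraction with $\Omega'$ should give the claimed isomorphism.

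First, I would verify that $\bullet \lrcorner \Omega'$ indeed maps $T_X(-\log D)$ into $\Omega^{n-1}_X(\log D)$ (not just into $\Omega^{n-1}_X(*D)$). This is a local computation: in a logarithmic coordinate system $(U, \{z^1,\dots,z^n\})$ with $D \cap U = \{z^1 \cdots z^k = 0\}$, one has
\[
\Omega'|_U = f \cdot \frac{dz^1}{z^1}\wedge\cdots\wedge\frac{dz^k}{z^k}\wedge dz^{k+1}\wedge\cdots\wedge dz^n
\]
with $f$ nowhere zero, and $T_X(-\log D)|_U$ is freely generated by $z^1\frac{\partial}{\partial z^1},\dots,z^k\frac{\partial}{\partial z^k},\frac{\partial}{\partial z^{k+1}},\dots,\frac{\partial}{\partial z^n}$. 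Using $\bigl(z^i\frac{\partial}{\partial z^i}\bigr)\lrcorner \frac{dz^i}{z^i} = 1$, one checks directly that contracting each generator of $T_X(-\log D)$ with $\Omega'$ produces, up to sign and the nonvanishing factor $f$, precisely the standard local frame of $\Omega^{n-1}_X(\log D)$ (the forms obtained from $\Omega'/f$ by deleting exactly one factor). So $\bullet \lrcorner \Omega'$ is a bundle map between locally free sheaves of the same rank that sends a frame to a frame at every point, hence an isomorphism.

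Next, I would exhibit the inverse explicitly. Since $\Omega'$ is nowhere vanishing, it has a unique dual section
\[
\Omega'^{*} \in A^{0,0}\bigl(X, \wedge^n T_X(-\log D)\bigr) \;=\; A^{0,0}\bigl(X, (\Omega^n_X(\log D))^{-1}\bigr)
\]
characterized by the pairing $\langle \Omega', \Omega'^{*}\rangle = 1$; in the local frame above, $\Omega'^{*} = f^{-1}\, z^1\frac{\partial}{\partial z^1}\wedge\cdots\wedge z^k\frac{\partial}{\partial z^k}\wedge \frac{\partial}{\partial z^{k+1}}\wedge\cdots\wedge\frac{\partial}{\partial z^n}$. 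Contraction with $\Omega'^{*}$ then defines the map
\[
\Omega'^{*}\lrcorner\bullet : A^{0,1}(X, \Omega^{n-1}_X(\log D)) \to A^{0,1}(X, T_X(-\log D)),
\]
and a pointwise computation in the local frame shows that the two compositions are the identity (essentially the standard identity $v\lrcorner(\alpha\lrcorner\Omega')=\pm\langle\alpha,v\rangle\cdot\text{(form)}$ applied to the top-rank case).

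Finally, tensoring the pointwise isomorphism $T_X(-\log D)\xrightarrow{\sim}\Omega^{n-1}_X(\log D)$ with the identity on $\mathcal{A}^{0,1}_X$ extends the result to smooth $(0,1)$-forms with values in either sheaf. The main subtlety—and really the only point that needs checking beyond linear algebra—is the verification in the local logarithmic coordinate frame that contraction of a \emph{logarithmic} vector field with the \emph{logarithmic} top form produces a form with only logarithmic poles, i.e.\ that the various $z^i$ factors in the generators of $T_X(-\log D)$ exactly cancel the corresponding poles of $\frac{dz^i}{z^i}$ in $\Omega'$; once this is in hand, the rest is a determinant-style argument.
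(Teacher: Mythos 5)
Your proposal is correct and follows essentially the same route as the paper: a local computation in a logarithmic coordinate frame showing that contraction with $\Omega'$ carries the standard frame of $T_X(-\log D)$ to the standard frame of $\Omega^{n-1}_X(\log D)$ up to the nowhere-vanishing factor, together with the explicit inverse given by contraction with the dual logarithmic $n$-vector divided by that factor --- which is exactly the paper's formula (\ref{log 0.8}). The only caveat is that since $\Omega'$ is merely a smooth section (the coefficient $u$ need not be holomorphic), the trivialization is a smooth bundle isomorphism rather than an isomorphism of locally free sheaves, but this suffices for the statement about $A^{0,1}$-spaces.
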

\begin{proof}
Locally, we may assume that $D=\{z^1\cdots z^k=0\}$ and write $\Omega'$ as
\begin{align}\label{u}
	\Omega'=u\frac{dz^1\wedge\cdots\wedge dz^n}{z^1\cdots z^k},
\end{align}
	where $u$ is a smooth function on $X$ and admits no zero points. Any element $\varphi$ of $A^{0,1}(X, T_X(-\log D))$ is of the form in local coordinates,
$$
\varphi=\sum_{i=1}^k \varphi_{\b{j}}^i d\b{z}^j\otimes z^i\frac{\p\ }{\p z^i}+\sum_{i=k+1}^n \varphi_{\b{j}}^i d\b{z}^j\otimes \frac{\p\ }{\p z^i}.
$$
 Thus,
 \begin{align}\label{log 0.7}
 \begin{split}
 \varphi\l\Omega'&=u\sum_{i=1}^k\varphi^i_{\b{j}}d\b{z}^j\wedge(-1)^{i-1}\frac{dz^1\wedge \cdots \wedge\widehat{dz^i}\wedge\cdots\wedge dz^n}{z^1\cdots\widehat{z^i}\cdots z^k}\\
 &\quad +u\sum_{i=k+1}^n\varphi^i_{\b{j}}d\b{z}^j\wedge(-1)^{i-1}\frac{dz^1\wedge \cdots \wedge\widehat{dz^i}\wedge\cdots\wedge dz^n}{z^1\cdots z^k}.
 \end{split}
 \end{align}
 If $\varphi\l\Omega'=0$, then it follows that all coefficient functions $\varphi^i_{\b{j}}=0$ from (\ref{log 0.7}). Any $\psi\in A^{0,1}(X, \Omega_X^{n-1}(\log D))$ is locally
 \begin{align*}\label{log 0.8}
\psi&=\sum_{i=1}^k\psi^i_{\b{j}}d\b{z}^j\wedge(-1)^{i-1}\frac{dz^1\wedge \cdots \wedge\widehat{dz^i}\wedge\cdots\wedge dz^n}{z^1\cdots\widehat{z^i}\cdots z^k}\\
 &\quad +\sum_{i=k+1}^n\psi^i_{\b{j}}d\b{z}^j\wedge(-1)^{i-1}\frac{dz^1\wedge \cdots \wedge\widehat{dz^i}\wedge\cdots\wedge dz^n}{z^1\cdots z^k}.
\end{align*}
 One can define
 \begin{equation}\label{log 0.8}
\begin{aligned}
\Omega'^*\l&=\frac{1}{u}\sum_{i=1}^k (-1)^{n+i}i_{\frac{\p\ }{\p z^n}}\circ\cdots\widehat{i_{z^i\frac{\p\ }{\p z^i}}}\cdots\circ i_{z^1\frac{\p\ }{\p z^1}}\otimes z^i\frac{\p\ }{\p z^i}\\
&\quad +\frac{1}{u}\sum_{i=k+1}^n (-1)^{n+i}i_{\frac{\p\ }{\p z^n}}\circ\cdots\widehat{i_{\frac{\p\ }{\p z^i}}}\cdots\circ i_{z^1\frac{\p\ }{\p z^1}}\otimes \frac{\p\ }{\p z^i}.	
\end{aligned}
\end{equation}
Thus,
\begin{align*}
\varphi&:=\Omega'^*\l\psi=\frac{1}{u}\sum_{i=1}^k\psi^i_{\b{j}}d\b{z}^j\otimes z^i\frac{\p\ }{\p z^i}+\frac{1}{u}\sum_{i=k+1}^n\psi^i_{\b{j}}d\b{z}^j\otimes \frac{\p\ }{\p z^i}\\
&\in A^{0,1}(X,T_X(-\log D)).
\end{align*}
Moreover,
$$
(\Omega'^*\l\psi)\l\Omega'=\psi,\quad \Omega'^*\l(\varphi\l\Omega')=\varphi.	
$$
\end{proof}

Since $\Omega^n_X(\log D)$ is trivial, one may take a holomorphic logarithmic $(n,0)$-form $\Omega$ without zero points.
\begin{prop}\label{prop3}
If there are two smooth families $$\varphi(t)\in A^{0,1}(X, T_X(-\log D))$$ and $$\Omega(t)\in A^{0,0}(X,\Omega^{n}_X(\log D))$$ satisfying the system of equations
\begin{equation}\label{log 0.2}
\begin{cases}
 (\b{\p}+\frac{1}{2}\p\circ i_{\varphi})(i_{\varphi}\Omega(t))=0,\\
(\b{\p}+\p\circ i_{\varphi})\Omega(t)=0,\\
\Omega_0=\Omega,
\end{cases}
\end{equation}
then $\varphi(t)$ satisfies $(\ref{log 0.1})$ for sufficiently small $t$.
\end{prop}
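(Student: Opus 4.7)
The plan is to reduce the integrability equation $\b\p\varphi = \tfrac12[\varphi,\varphi]$ to the system (\ref{log 0.2}) by contracting with $\Omega(t)$ and then inverting the contraction via Lemma \ref{log lemma2}. More precisely, I would first establish the algebraic identity
\[
(\b\p\varphi-\tfrac12[\varphi,\varphi])\l\Omega(t) = \bigl[\b\p(\varphi\l\Omega(t))+\tfrac12\p(\varphi\l(\varphi\l\Omega(t)))\bigr] - \varphi\l\bigl[\b\p\Omega(t)+\p(\varphi\l\Omega(t))\bigr],
\]
up to the sign conventions used in the paper. Once this is in hand, the two equations of (\ref{log 0.2}) force both brackets on the right to vanish, so $(\b\p\varphi-\tfrac12[\varphi,\varphi])\l\Omega(t)=0$ for all $t$ in the given neighborhood.

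To derive this identity, first observe that $\p\Omega(t)=0$ automatically: $\Omega(t)$ is a logarithmic $(n,0)$-form, so $\p\Omega(t)$ would be a logarithmic $(n+1,0)$-form, which vanishes by dimension. Consequently, Lemma \ref{TianTodorov} applied with $\psi=\varphi$ reduces to
\[
\tfrac12[\varphi,\varphi]\l\Omega(t) = -\tfrac12\p(\varphi\l(\varphi\l\Omega(t))) + \varphi\l\p(\varphi\l\Omega(t)).
\]
Combining this with the derivation rule $\b\p(\varphi\l\Omega(t)) = (\b\p\varphi)\l\Omega(t) \pm \varphi\l\b\p\Omega(t)$ and re-grouping yields the bracketed identity above. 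This is exactly the kind of Tian--Todorov bookkeeping that appears in \cite{Liu,RZ,RwZ}, and the logarithmic case is formally identical once one knows (as in Lemma \ref{TianTodorov}) that the logarithmic factors $dz^i/z^i$ behave well under $\p$.

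Next, I would use Lemma \ref{log lemma2} to invert the contraction. This lemma requires the logarithmic $(n,0)$-form to have no zeros (in the sense of the local function $u$ in \eqref{u}). Since $\Omega(0)=\Omega$ is by assumption a nowhere-vanishing section of $\Omega^n_X(\log D)\cong \mathcal{O}_X$ and $\Omega(t)$ is smooth in $t$, the local coefficient $u$ stays nonzero for $t$ in a sufficiently small neighborhood of $0$. Hence $\bullet\l\Omega(t)$ remains an isomorphism between $A^{0,1}(X,T_X(-\log D))$ and $A^{0,1}(X,\Omega^{n-1}_X(\log D))$, and we may conclude $\b\p\varphi-\tfrac12[\varphi,\varphi]=0$ on that neighborhood.

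The main obstacle I anticipate is pinning down the precise signs in both the Leibniz rule for $\b\p\circ i_\varphi$ and in the Tian--Todorov formula so that the two bracketed terms really are the ones appearing in (\ref{log 0.2}). This is a bookkeeping issue rather than a conceptual one, but it must be done carefully to get the exact factor $\tfrac12$ on the first equation of (\ref{log 0.2}) (the second equation is the usual Maurer--Cartan/extension equation from Theorem \ref{thm3}, so its compatibility is already implicit in the setup). Once the signs check out, the invertibility step is immediate from Lemma \ref{log lemma2} and the non-vanishing of $\Omega(0)$.
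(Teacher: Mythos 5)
Your proposal is correct and follows essentially the same route as the paper: contract the Maurer--Cartan expression with $\Omega(t)$, use the logarithmic Tian--Todorov formula (Lemma \ref{TianTodorov}) together with the two equations of (\ref{log 0.2}) to show $(\b\p\varphi-\tfrac12[\varphi,\varphi])\l\Omega(t)=0$, and invert via Lemma \ref{log lemma2} using the non-vanishing of $\Omega(t)$ for small $t$. Your single displayed identity is just a repackaging of the paper's chain of substitutions, and it checks out with the paper's sign conventions.
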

\begin{proof}
From (\ref{log 0.2}) and Lemma \ref{TianTodorov}, one has
\begin{align}\label{log 0.3}
\begin{split}
\b{\p}(\varphi\l\Omega(t))&=-\frac{1}{2}\p\circ i_{\varphi}\circ i_{\varphi}\Omega(t)\\
&=\frac{1}{2}[\varphi,\varphi]\l\Omega(t)-i_{\varphi}\circ\p\circ i_{\varphi}\Omega(t)\\
&=\frac{1}{2}[\varphi,\varphi]\l\Omega(t)+i_{\varphi}\circ\b{\p}\Omega(t).
	\end{split}
\end{align}
Therefore,
$$
(\b{\p}\varphi)\l\Omega(t)=\b{\p}(\varphi\l\Omega(t))-i_{\varphi}\circ\b{\p}\Omega(t)=\frac{1}{2}[\varphi,\varphi]\l\Omega(t).
$$
Since $\Omega(t)$ is smooth and $\Omega(0)=\Omega_0=\Omega$, $\Omega(t)\in A^{0,0}(X, \Omega^n_X(\log D))$ also has no zero points for small $t$. One has from Lemma \ref{log lemma2}
$$
\b{\p}\varphi=\frac{1}{2}[\varphi,\varphi].	
$$
\end{proof}
To study the system of equations (\ref{log 0.2}), we need a logarithmic analogue of \cite[Lemma 4.2]{Liu}.
\begin{lemma}\label{log lemma1}
Assume that for $\varphi_{\nu}\in A^{0,1}(X, T_X(-\log D))$, $\nu=2,\ldots,K$,
$$
\b{\p}\varphi_{\nu}=\frac{1}{2}\sum_{\alpha+\beta=\nu}[\varphi_{\alpha},\varphi_{\beta}], \quad \b{\p}\varphi_1=0.	
$$
	Then one has
$$
	\b{\p}\left(\sum_{\nu+\gamma=K+1}[\varphi_{\nu},\varphi_{\gamma}]\right)=0.	
$$

\end{lemma}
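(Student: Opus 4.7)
The plan is the standard DGLA computation: expand $\b{\p}$ on the sum via graded Leibniz, substitute the integrability hypothesis to obtain a sum of triple brackets, then invoke the graded Jacobi identity to see that the sum vanishes. The key point is that $T_X(-\log D)$ is closed under the Lie bracket (this is already used implicitly when writing the Maurer--Cartan equation \eqref{integrable}), so that $A^{0,*}(X,T_X(-\log D))$ inherits a differential graded Lie algebra structure from $A^{0,*}(X,T_X)$, and the classical bracket identities apply verbatim.

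First I would set $\varphi_0:=0$, so that the hypothesis $\b{\p}\varphi_1=0$ is exactly the $\nu=1$ instance of $\b{\p}\varphi_\nu=\tfrac{1}{2}\sum_{\alpha+\beta=\nu}[\varphi_\alpha,\varphi_\beta]$ (the RHS being $[\varphi_0,\varphi_1]=0$). Since each $\varphi_\nu$ has total degree $1$ in the DGLA, one has the symmetry $[\varphi_\alpha,\varphi_\beta]=[\varphi_\beta,\varphi_\alpha]$ and the Leibniz rule
$$\b{\p}[\varphi_\nu,\varphi_\gamma]=[\b{\p}\varphi_\nu,\varphi_\gamma]-[\varphi_\nu,\b{\p}\varphi_\gamma].$$
Using $[\varphi_\nu,\b{\p}\varphi_\gamma]=-[\b{\p}\varphi_\gamma,\varphi_\nu]$ (odd times even) and relabeling $\nu\leftrightarrow\gamma$ inside the sum $\nu+\gamma=K+1$, I would obtain
$$\b{\p}\sum_{\nu+\gamma=K+1}[\varphi_\nu,\varphi_\gamma]=2\sum_{\nu+\gamma=K+1}[\b{\p}\varphi_\nu,\varphi_\gamma].$$

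Second, substitute the integrability identity for $\b{\p}\varphi_\nu$ on the right-hand side. This produces
$$\sum_{\nu+\gamma=K+1}\Bigl[\tfrac{1}{2}\sum_{\alpha+\beta=\nu}[\varphi_\alpha,\varphi_\beta],\varphi_\gamma\Bigr]=\tfrac{1}{2}\!\!\sum_{\alpha+\beta+\gamma=K+1}\!\![[\varphi_\alpha,\varphi_\beta],\varphi_\gamma].$$
Thus matters reduce to showing that $S:=\sum_{\alpha+\beta+\gamma=K+1}[[\varphi_\alpha,\varphi_\beta],\varphi_\gamma]=0$.

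Finally I would apply the graded Jacobi identity for odd elements,
$$[[\varphi_\alpha,\varphi_\beta],\varphi_\gamma]+[[\varphi_\beta,\varphi_\gamma],\varphi_\alpha]+[[\varphi_\gamma,\varphi_\alpha],\varphi_\beta]=0.$$
Because the index set $\{(\alpha,\beta,\gamma):\alpha+\beta+\gamma=K+1,\ \alpha,\beta,\gamma\geq 0\}$ is invariant under cyclic permutation, the three cyclic rewritings of $S$ coincide, so $3S$ equals the sum of the Jacobi identity over this index set and hence $3S=0$.

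I do not expect any real obstacle beyond bookkeeping: the only thing that could go wrong is a sign error in the graded Leibniz/skew-symmetry step, which is easily checked, or a worry that the logarithmic bracket differs from the usual Lie bracket on vector fields, which it does not (the defining property of $T_X(-\log D)$ is precisely that its sections are stable under $[\,\cdot\,,\,\cdot\,]$).
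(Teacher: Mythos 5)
Your proof is correct and is essentially the argument the paper relies on: the paper simply cites the local computation of \cite[Lemma 4.2]{Liu} together with the remark that $T_X(-\log D)$ sits inside $T_X$, and your graded Leibniz $+$ graded Jacobi calculation (with the key observation that $T_X(-\log D)$ is closed under the bracket, so $A^{0,\bullet}(X,T_X(-\log D))$ is a sub-DGLA) is precisely the content of that cited lemma. The only blemish is an immaterial factor of $2$ dropped when substituting the Maurer--Cartan relation, which does not affect the reduction to $\sum_{\alpha+\beta+\gamma=K+1}[[\varphi_\alpha,\varphi_\beta],\varphi_\gamma]=0$.
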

\begin{proof}
  The local calculation is exactly the same as that in the proof of \cite[Lemma 4.2]{Liu} since one can regard $T_X(-\log D)$ as a subbundle of  $T_X$.
\end{proof}

Now we will solve the system of equations (\ref{log 0.2}) by mixing the methods originally from \cite{T87,To89,LSY} and developed in \cite{Liu,RZ,RZ2, RZ15, RwZ}.

Firstly, we denote $(\ref{log 0.2})_k$ by the system of equations of the first equation in (\ref{log 0.2}) with the $(k+1)$-th degree equal to $0$ and the second equation in (\ref{log 0.2}) with the $k$-th degree equal to $0$.

For $k=1$, the equation $(\ref{log 0.2})_{k=1}$ is
$$
\begin{cases}
	\b{\p}(i_{\varphi_2}\Omega_0+i_{\varphi_1}\Omega_1)+\frac{1}{2}\p\circ i_{\varphi_1}(i_{\varphi_1}\Omega_0)=0,\\
\b{\p}\Omega_1+\p\circ i_{\varphi_1}\Omega_0=0.
\end{cases}
$$
Thus, one can take
$$
 	\Omega_1=\mathcal{I}^{-1} \b{\p}_E^*\mb{G}''_E\mathcal{I}(-\p\circ i_{\varphi_1}\Omega_0)\in A^{0,0}(X, \Omega^n_X(\log D))
$$
and from Lemma \ref{log lemma2}, one can find $\varphi_2\in A^{0,1}(X, T_X(-\log D))$ satisfying
\begin{align*}
i_{\varphi_2}\Omega_0&=-i_{\varphi_1}\Omega_1+\mathcal{I}^{-1} \b{\p}_E^*\mb{G}''_E\mathcal{I}\left(-\frac{1}{2}\p\circ i_{\varphi_1}(i_{\varphi_1}\Omega_0)\right)\\
&\in A^{0,1}(X,\Omega^{n-1}_X(\log D))
\end{align*}
since $\varphi_1\l: A^{0,q}(X,\Omega^p_X(\log D))\to A^{0,q+1}(X,\Omega^{p-1}_X(\log D))$.

By induction, we may assume that the equation (\ref{log 0.2}) is solved for $k\leq l$ and we have constructed $\Omega_i, 0\leq i\leq l$ and $\varphi_i, 1\leq i\leq l+1$. So from (\ref{log 0.3}), we have
\begin{align}\label{log 0.4}
	\b{\p}\varphi_k=\frac{1}{2}\sum_{i+j=k}[\varphi_i,\varphi_j]\quad k\leq l+1.
\end{align}
Combining it with Lemma \ref{log lemma1}, one has
\begin{align}\label{log 0.5}
\b{\p}\left(\sum_{i+j=k}[\varphi_i,\varphi_j]\right)=0,\quad k\leq l+2.
\end{align}

Now for the $(l+1)$-th step, (\ref{log 0.2}) becomes
$$\label{log 0.6}
\begin{cases}
	\sum_{i+j=l+2}\b{\p}i_{\varphi_i}\Omega_j+\sum_{i+j+k=l+2}\frac{1}{2}\p\circ i_{\varphi_i}i_{\varphi_j}\Omega_k=0,\\
\b{\p}\Omega_{l+1}+\sum_{i+j=l+1}\p\circ i_{\varphi_i}\Omega_j=0.
\end{cases}
$$

Then we take
\begin{align}\label{Omega}
\Omega_{l+1}=\sum_{i+j=l+1}\mathcal{I}^{-1} \b{\p}_E^*\mb{G}''_E\mathcal{I}(-\p\circ i_{\varphi_i}\Omega_j)\in A^{0,0}(X, \Omega^n_X(\log D)).
\end{align}
On the other hand,
\begin{align*}
&\quad \b{\p}\sum_{i+j+k=l+2}\frac{1}{2}\p\circ i_{\varphi_i}\circ i_{\varphi_j}\circ \Omega_k\\
& =\sum_{i+j+k=l+2}\b{\p}\left(-[\varphi_j,\varphi_i]\l\Omega_k+\varphi_i\l\p(\varphi_j\l\Omega_k)+\varphi_j\l\p(\varphi_i\l\Omega_k)\right)\\
&=-\sum_{i+j+k=l+2}\b{\p}([\varphi_j,\varphi_i]\l\Omega_k)-2\sum_{j+k=l+2,k\geq 1}\b{\p}(\varphi_j\l\b{\p}\Omega_k)\\
&=-\sum_{k=0}^{l+2}\left(\b{\p}\sum_{i+j=l+2-k}[\varphi_j,\varphi_i]\right)\l\Omega_k+2\sum_{k=1}^{l+2}\left(\frac{1}{2}\sum_{i+j=l+2-k}[\varphi_j,\varphi_i]-\b{\p}\varphi_{l+2-k}\right)\l\b{\p}\Omega_k\\
&=0,
\end{align*}
where the last equality follows from (\ref{log 0.5}), (\ref{log 0.4}) and $\b{\p}\Omega_0=0$. From Lemma \ref{log lemma2}, we may take $\varphi_{l+2}\in A^{0,1}(X, T_X(-\log D))$ satisfying
\begin{align}\label{log varphi}
\begin{split}
i_{\varphi_{l+2}}\Omega_0&=-\sum_{j=1}^{l+2}i_{\varphi_{l+2-j}}\Omega_j+\mathcal{I}^{-1} \b{\p}_E^*\mb{G}''_E\mathcal{I}\left(-\sum_{i+j+k=l+2}\frac{1}{2}\p\circ i_{\varphi_i}i_{\varphi_j}\Omega_k\right)\\
&\in A^{0,1}(X,\Omega^{n-1}_X(\log D)).
\end{split}
\end{align}

 In one word, we have constructed $\Omega_i\in A^{0,0}(X,\Omega^n_X(\log D)), 0\leq i\leq l+1$ and $\varphi_i\in A^{0,1}(X, T_X(-\log D)), 1\leq i\leq l+2$, which satisfying $(\ref{log 0.2})_{l+1}$. So we can
solve (\ref{log 0.2}) inductively for small $t$.

Now we  prove the convergence of $\varphi(t)$ and $\Omega(t)$ under $C^{k,\alpha}$-norm. As in \cite[p. 50]{MK}, we may consider the power series
$$
A(t)=\frac{b}{16c}\sum_{m=1}^{\infty}\frac{(ct)^m}{m^2}=\sum_{m=1}^{\infty}a_m t^m,	\quad a_m=\frac{bc^{m-1}}{16m^2},
$$
which satisfies $$A^n(t)\leq (b/c)^{n-1}A(t)$$ and converges for $|t|<1/c$, where $b,c>0$.

 Fix an integer $k\geq 2$ and a real constant $\alpha\in (0,1)$.
 Suppose that they are chosen so that $\|\varphi_i\|_{k,\alpha}\leq a_i$ for $1\leq i\leq l+1$, and $\|\Omega_i\|_{k,\alpha}\leq (c/b)^{1/2}a_i$ for $1\leq i\leq l$. From (\ref{Omega}), it follows that
\begin{align}\label{log 1.2}
\begin{split}
\|\Omega_{l+1}\|_{k,\alpha}	&\leq C\sum_{i+j=l+1}\|\varphi_i\|_{k,\alpha}\|\Omega_j\|_{k,\alpha}\\
&\leq C\left(\left(\frac{c}{b}\right)^{1/2}\sum_{i+j=l+1}a_i a_j+\|\Omega_0\|_{k,\alpha}a_{l+1}\right)\\
&\leq C \left(\frac{b}{c}+\left(\frac{b}{c}\right)^{1/2}\|\Omega_0\|_{k,\alpha}\right)\left(\frac{c}{b}\right)^{1/2}a_{l+1}.
\end{split}
\end{align}

Also by (\ref{log varphi}), one has
$$
\varphi_{l+2}=-\Omega_0^*\l\left(\sum_{j=1}^{l+1}i_{\varphi_{l+2-j}}\Omega_j+\mathcal{I}^{-1} \b{\p}_E^*\mb{G}''_E\mathcal{I}\left(\sum_{i+j+k=l+2}\frac{1}{2}\p\circ i_{\varphi_i}i_{\varphi_j}\Omega_k\right)	\right).
$$
From the above expression, it follows
\begin{align}\label{log 1.1}
\begin{split}
\|\varphi_{l+2}\|_{k,\alpha}&\leq C\left(\left(\frac{c}{b}\right)^{1/2}\frac{b}{c}+\|\Omega_0\|_{k,\alpha}\frac{b}{c}+\left(\frac{c}{b}\right)^{1/2}\left(\frac{b}{c}\right)^2\right)a_{l+2}\\
&\leq C\left(\left(\frac{b}{c}\right)^{1/2}+\|\Omega_0\|_{k,\alpha}\frac{b}{c}+\left(\frac{b}{c}\right)^{3/2}\right) a_{l+2}.
\end{split}
\end{align}
Now one may take $b/c$  so small that
$$
\begin{cases}
	C \left(\frac{b}{c}+\left(\frac{b}{c}\right)^{1/2}\|\Omega_0\|_{k,\alpha}\right)\leq 1,\\
	C\left(\left(\frac{b}{c}\right)^{1/2}+\|\Omega_0\|_{k,\alpha}\frac{b}{c}+\left(\frac{b}{c}\right)^{3/2}\right)\leq 1.
\end{cases}
$$
By (\ref{log 1.1}) and (\ref{log 1.2}), one has
$$\|\varphi_{l+2}\|_{k,\alpha}\leq a_{l+2}$$ and $$\|\Omega_{l+1}\|_{k,\alpha}\leq (c/b)^{1/2}a_{l+1}.$$
 Since $b/c$ is invariant by  the same scaling to $c$ and $b$, we can assume $\|\varphi_1\|_{k,\alpha}\leq  a_1$.
Thus,
$$
\|\varphi(t)\|_{k,\alpha}\leq A(t),\quad \|\Omega(t)\|_{k,\alpha}\leq \|\Omega_0\|_{k,\alpha}+\left(\frac{c}{b}\right)^{1/2}A(t)
$$ for $|t|<1/c$.

Finally we come to the regularity argument of $\varphi:=\varphi(t)$, which is a little more difficult than that in \cite[Subsection 3.2]{RwZ} since one has to consider the regularity of $\varphi$ and $\Omega(t)$ by a simultaneous induction here. From (\ref{log varphi}) and (\ref{Omega}), one has
\begin{equation}\label{log 1.3}
\begin{cases}
	 (I+\mathcal{I}^{-1} \b{\p}_E^*\mb{G}''_E\mathcal{I} \p i_{\varphi})\Omega(t)=\Omega_0\\
	(I+\frac{1}{2}\mathcal{I}^{-1} \b{\p}_E^*\mb{G}''_E\mathcal{I} \p i_{\varphi})i_{\varphi}\Omega(t)=i_{\varphi_1}\Omega_0.
\end{cases}	
\end{equation}

From the first equation of \eqref{log 1.3}, one has $\frac{\p\Omega(t)}{\p\b{t}}=0$. So
\begin{align}\label{log 1.5}
\begin{cases}
\frac{\p^2}{\p t\p\b{t}}\mathcal{I}(\Omega(t))+\Delta^E_{\b{\p}}\mathcal{I}(\Omega(t))=
	\b{\p} \b{\p}_E^*\mathcal{I}(\Omega)- \b{\p}_E^*\mathcal{I} \p i_{\varphi}\Omega(t),\\
	\frac{\p^2}{\p t\p\b{t}}\mathcal{I}(i_{\varphi}\Omega(t))+\Delta_{\b{\p}}^E\mathcal{I}(i_{\varphi}\Omega(t))=\Delta_{\b{\p}}^E\mathcal{I}(i_{\varphi_1}\Omega_0)-\frac{1}{2} \b{\p}_E^*\mathcal{I}\p i_{\varphi} i_{\varphi}\Omega(t).
	\end{cases}
	\end{align}

Since $\varphi$ and $\Omega(t)$ are convergent under $C^{k,\alpha}$ norm,
\begin{align}\label{log 1.8}
\|\mathcal{I}(i_{\varphi}\Omega(t))\|_{k,\alpha}<C.	
\end{align}
For the second equation of (\ref{log 1.5}), one has
\begin{align}\label{log 1.6}
 \b{\p}_E^*\mathcal{I}(i_{\varphi}\Omega(t))= \b{\p}_E^*\mathcal{I}(i_{\varphi_1}\Omega_0).	
\end{align}
Noticing that $[ \b{\p}_E^*, \mathcal{I} \p\mathcal{I}^{-1}]$ is an operator of first order and (\ref{log 1.6}), we have
\begin{align*}
	\| \b{\p}_E^*\mathcal{I} \p i_{\varphi}\Omega(t)\|_{k-1,\alpha} &=\|[ \b{\p}_E^*,\mathcal{I} \p \mathcal{I}^{-1}] \mathcal{I} (i_{\varphi}\Omega(t))-\mathcal{I}\p\mathcal{I}^{-1} \b{\p}_E^*\mathcal{I}(i_{\varphi}\Omega(t))\|_{k-1,\alpha}\\
	&\leq C\|\mathcal{I} (i_{\varphi}\Omega(t))\|_{k,\alpha}+\|\mathcal{I}\p\mathcal{I}^{-1} \b{\p}_E^*\mathcal{I}(i_{\varphi_1}\Omega_0)\|_{k-1,\alpha}\leq C.
\end{align*}

By the first equation of (\ref{log 1.5}), one gets $\|\mathcal{I}(\Omega(t))\|_{k+1, \alpha}<C$. By the expression (\ref{u}), the local function $u(t)$ associated with $\Omega(t)$ is locally in $C^{k+1,\alpha}$, i.e., for each $U_i$, $|u(t)|^{U_i}_{k+1,\alpha}<C$ (cf. \cite[275]{k}). By shrinking $\{|t|<c\}$ slightly smaller, we may assume that $|u(t)|^{U_i}_0>c_0$ since $u(0)$ has no zero points. By a direct computation, one has
\begin{align}\label{log 1.10}
	\left|\frac{1}{u(t)}\right|^{U_i}_{k+1,\alpha}<C.
\end{align}

From (\ref{log 0.8}) and (\ref{log 1.10}), we can view
\begin{align}\label{log 1.7}
	-\frac{1}{2} \b{\p}_E^*\mathcal{I}\p i_{\varphi} i_{\varphi}\Omega(t)=-\frac{1}{2} \b{\p}_E^*\mathcal{I}\p (\Omega(t)^*\l i_{\varphi}\Omega(t))\l i_{\varphi}\Omega(t)
\end{align}
 as a linear operator of second order in terms of $\mathcal{I}(i_{\varphi}\Omega(t))$ with $C^{k-1,\alpha}$-coefficients.
By (\ref{log 1.7}) and $\varphi(0)=0$, the second  equation of (\ref{log 1.5}) can be viewed as a linear elliptic equation with $C^{k-1,\alpha}$-coefficients when $t$ is small enough. From \cite[Theorem 6.17]{Gilbarg}, it follows that
\begin{align}\label{log 1.9}
	\|\mathcal{I}(i_{\varphi}\Omega(t))\|_{k+1,\alpha}<C.
\end{align}

By repeating the processes from (\ref{log 1.8}) to (\ref{log 1.9}), one gets the smoothness of $\mathcal{I}(\Omega(t))$, $\mathcal{I}(i_{\varphi}\Omega(t))$ and thus those of $\Omega(t)$ and $i_{\varphi}\Omega(t)$. So
$$
\varphi(t)=\Omega(t)^*\l(i_{\varphi}\Omega(t))
$$
is smooth on $X\times \{|t|<\epsilon\}$ for some $0<\epsilon<c$ inductively.

In conclusion, we get
\begin{thm}\label{Bel-equ}
Let $[\varphi_1]\in H^{0,1}(X, T_X(-\log D))$.
On a small $\epsilon$-disk of $0$ in $\mathbb{C}^{\dim_{\mathbb{C}} H^{0,1}(X, T_X(-\log D))}$, one can construct a holomorphic family $$\varphi(t)\in A^{0,1}(X, T_X(-\log D)),$$  such that
$$
\b{\p}\varphi(t)=\frac{1}{2}[\varphi(t),\varphi(t)],  \quad \frac{\p \varphi}{\p t}(0)=\varphi_1.	
$$
\end{thm}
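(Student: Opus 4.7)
The plan is to reduce the Maurer--Cartan equation $\b{\p}\varphi=\frac12[\varphi,\varphi]$ to a coupled system of $\b{\p}$-equations on logarithmic forms, in the spirit of Tian--Todorov. Since $(X,D)$ is log Calabi--Yau, fix a nowhere-vanishing holomorphic logarithmic $(n,0)$-form $\Omega$. By Lemma \ref{log lemma2}, contraction with $\Omega$ gives an isomorphism $A^{0,1}(X,T_X(-\log D))\cong A^{0,1}(X,\Omega^{n-1}_X(\log D))$, so reading the Maurer--Cartan equation through this contraction converts the nonlinear PDE for $\varphi$ into one for $\varphi\l\Omega(t)$ on which Theorem \ref{main theorem} (the logarithmic $\partial\b{\p}$-lemma) becomes available. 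Proposition \ref{prop3} then reduces the task to finding smooth families $\varphi(t)\in A^{0,1}(X,T_X(-\log D))$ and $\Omega(t)\in A^{0,0}(X,\Omega^n_X(\log D))$ with $\Omega(0)=\Omega$ and $\p_t\varphi(0)=\varphi_1$, solving the system \eqref{log 0.2}.

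I will build the solution as a formal power series and then prove convergence and smoothness. Write $\varphi(t)=\sum_{k\geq 1}\varphi_k$, $\Omega(t)=\sum_{k\geq 0}\Omega_k$ with $\Omega_0=\Omega$, and solve the two equations of \eqref{log 0.2} by simultaneous induction on total degree. At the $(l+1)$-th step, I set
$$\Omega_{l+1}=-\,\mathcal{I}^{-1}\b{\p}_E^*\mb{G}''_E\mathcal{I}\Bigl(\p\!\!\sum_{i+j=l+1}\!\!\varphi_i\l\Omega_j\Bigr),$$
which is legitimate precisely because $\b{\p}\p\sum_{i+j=l+1}\varphi_i\l\Omega_j=0$, a vanishing that I will derive from the inductive integrability $\b{\p}\varphi_k=\tfrac12\sum_{i+j=k}[\varphi_i,\varphi_j]$ and the logarithmic Tian--Todorov identity (Lemma \ref{TianTodorov}). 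Then $\varphi_{l+2}$ is recovered by inverting $\bullet\l\Omega_0$ via Lemma \ref{log lemma2} on the expression \eqref{log varphi}; the key input is that the right-hand side of that equation is $\b{\p}$-exact, which again uses Theorem \ref{main theorem} together with Lemma \ref{log lemma1} to ensure that $\b{\p}\sum_{i+j=l+2}[\varphi_i,\varphi_j]=0$. This propagates the integrability condition to degree $l+2$ and closes the induction.

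Convergence in $C^{k,\alpha}$-norm is handled by the Kodaira--Nirenberg--Spencer majorant-series method: the comparison series $A(t)=\tfrac{b}{16c}\sum_m(ct)^m/m^2$, which converges for $|t|<1/c$ and obeys $A^n\leq(b/c)^{n-1}A$, provides bounds $\|\varphi_m\|_{k,\alpha}\leq a_m$ and $\|\Omega_m\|_{k,\alpha}\leq(c/b)^{1/2}a_m$ once $b/c$ is taken small enough to absorb the constants appearing in $\mb{G}''_E$ and $\b{\p}_E^*$. Regularity is then promoted to smoothness by applying $\p_t\p_{\b t}+\Delta_{\b{\p}}^E$ to $\mathcal{I}(\Omega(t))$ and to $\mathcal{I}(i_\varphi\Omega(t))$, obtaining a coupled system of linear elliptic equations, and bootstrapping in $C^{k,\alpha}$; here one uses that $\Omega$ is nowhere-vanishing to keep $1/u(t)$ regular for small $t$.

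The main obstacle will be the inductive verification that the obstruction truly vanishes at each step. Without the logarithmic $\partial\b{\p}$-lemma of Theorem \ref{main theorem}, the naive alternative of working on the open part $X-D$ with a complete K\"ahler metric fails because logarithmic forms are not $L^2$-integrable. The coupling between $\Omega(t)$ and $\varphi(t)$ also makes the regularity argument more delicate than in the smooth (empty-divisor) Calabi--Yau case: one must bootstrap both unknowns simultaneously, using the nowhere-vanishing property of $\Omega$ to invert $\bullet\l\Omega(t)$ with controlled loss of derivatives.
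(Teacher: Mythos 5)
Your proposal is correct and follows essentially the same route as the paper: reduction via Proposition \ref{prop3} and Lemma \ref{log lemma2} to the coupled system \eqref{log 0.2}, simultaneous induction with the same formulas for $\Omega_{l+1}$ and $\varphi_{l+2}$, obstruction vanishing via the logarithmic Tian--Todorov identity and Lemma \ref{log lemma1} feeding into Theorem \ref{thm2}, Kodaira-style majorant-series convergence, and a coupled elliptic bootstrap for smoothness. No substantive differences from the paper's argument.
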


\subsection{Logarithmic deformations on Calabi-Yau manifolds}
Let $X$ be a \emph{Calabi-Yau manifold}, i.e.,  an $n$-dimensional K\"ahler manifold admitting a nowhere vanishing holomorphic $(n,0)$-form, and $D$ a simple normal crossing divisor on it.
It is well-known that the deformations of a Calabi-Yau manifold are unobstructed, by the Bogomolov-Tian-Todorov theorem, due to F. Bogomolov \cite{B}, G. Tian \cite{T87} and A. Todorov \cite{To89}.
In this subsection, we prove that the logarithmic deformation of a pair on a Calabi-Yau manifold is unobstructed by an analogous method to Theorem \ref{Bel-equ} by constructing a family of integrable logarithmic Beltrami differentials on a small disk.

\begin{thm}\label{log}
With the above setting, the pair $(X,D)$ has unobstructed logarithmic deformations.
\end{thm}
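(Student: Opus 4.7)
The plan is to adapt the argument of Theorem \ref{Bel-equ} to the Calabi--Yau setting, with the role of the nowhere-vanishing logarithmic $(n,0)$-form played by the Calabi--Yau holomorphic $(n,0)$-form $\Omega\in H^{0}(X,K_X)$, and with Theorem \ref{sol-CY} replacing Theorem \ref{thm2}. The first preliminary would be to record that, since $\Omega$ is holomorphic and nowhere vanishing, contraction $\bullet\l\Omega$ induces an isomorphism of locally free sheaves
\[
T_X(-\log D)\xrightarrow{\;\cong\;}\Omega^{n-1}_X(\log D)\otimes\mc{O}_X(-D),
\]
with a concrete local inverse paralleling \eqref{log 0.8}. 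Via the identification \eqref{iso2}, the target embeds naturally into $A^{n,\bullet}(X,T_X(-\log D))\subset A^{n-1,\bullet}(X)$, which is precisely the ambient space appearing in Theorem \ref{sol-CY}.

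I would next show that it suffices to construct smooth families $\varphi(t)\in A^{0,1}(X,T_X(-\log D))$ and $\Omega(t)\in A^{0,0}(X,\Omega^n_X)$ satisfying the coupled system
\[
(\b\p+\tfrac{1}{2}\p\circ i_\varphi)(i_\varphi\Omega(t))=0,\qquad (\b\p+\p\circ i_\varphi)\Omega(t)=0,
\]
with $\Omega(0)=\Omega$ and $\tfrac{\p\varphi}{\p t}(0)=\varphi_1$. That such a system forces $\b\p\varphi=\tfrac{1}{2}[\varphi,\varphi]$ is, word-for-word, the argument of Proposition \ref{prop3}, using the isomorphism above in place of Lemma \ref{log lemma2}; the only new input is that $\Omega(t)$, a smooth family of holomorphic-valued $(n,0)$-forms on $X$ with $\Omega(0)$ nowhere vanishing, remains nowhere vanishing for small $t$.

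The core of the proof is the order-by-order construction paralleling the paragraphs after Proposition \ref{prop3}. Expanding $\varphi(t)=\sum_{k\geq 1}\varphi_k$ and $\Omega(t)=\sum_{k\geq 0}\Omega_k$, at stage $l+1$ the second equation requires
\[
\b\p\Omega_{l+1}=-\p\Bigl(\sum_{i+j=l+1,\,i\geq 1}\varphi_i\l\Omega_j\Bigr).
\]
Using Lemma \ref{TianTodorov} in tandem with the inductively known identities $\b\p\varphi_i=\tfrac{1}{2}\sum_{a+b=i}[\varphi_a,\varphi_b]$ and the previously solved second equations at lower orders, one checks that the inner form $\alpha:=\sum_{i+j=l+1,\,i\geq 1}\varphi_i\l\Omega_j\in A^{n,1}(X,T_X(-\log D))\subset A^{n-1,1}(X)$ satisfies $\b\p\p\alpha=0$. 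Theorem \ref{sol-CY} applied with $p=1$ and $n-q=1$ then delivers the desired $\Omega_{l+1}\in A^{0,0}(X,\Omega^n_X)$. The construction of $\varphi_{l+2}$ from the first equation is analogous: one first solves a $\b\p$-equation for $i_{\varphi_{l+2}}\Omega_0$ by another application of Theorem \ref{sol-CY}, and then inverts $\bullet\l\Omega_0$ via the isomorphism of the first paragraph to recover $\varphi_{l+2}$.

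The hard part will be verifying the chain of $\b\p\p$-compatibility identities at each inductive step so that Theorem \ref{sol-CY} applies uniformly; these reduce, via Lemma \ref{TianTodorov} and Lemma \ref{log lemma1}, to a computation with exactly the structure of \eqref{log 0.3}--\eqref{log varphi} in the log Calabi--Yau case, with $\Omega$ now non-logarithmic. Once the formal power series has been built, $C^{k,\alpha}$-convergence via the majorant $A(t)=\tfrac{b}{16c}\sum_{m\geq 1}(ct)^m/m^2$ and the smoothness of $\varphi(t)$ and $\Omega(t)$ follow by the elliptic-regularity argument at the end of the proof of Theorem \ref{Bel-equ}, applied to the coupled elliptic system satisfied by $\mathcal{I}(\Omega(t))$ and $\mathcal{I}(i_\varphi\Omega(t))$.
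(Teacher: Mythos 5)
Your proposal is correct and follows essentially the same route as the paper: replace the trivializing logarithmic $(n,0)$-form by the Calabi--Yau form $\Omega$, use the contraction isomorphism $\bullet\l\Omega: A^{0,1}(X,T_X(-\log D))\to A^{n,1}(X,T_X(-\log D))\subset A^{n-1,1}(X)$ (equivalently $T_X(-\log D)\cong\Omega^{n-1}_X(\log D)\otimes\mc{O}_X(-D)$), reduce to the coupled system via the argument of Proposition \ref{prop3}, and run the iteration of Theorem \ref{Bel-equ} with Theorem \ref{sol-CY} supplying the $\b\p$-solvability at each order. Your degree bookkeeping for the two applications of Theorem \ref{sol-CY} ($p=1$ for the $\Omega_{l+1}$ equation, $p=2$ for the $\varphi_{l+2}$ equation) matches what the paper's sketch intends, and the convergence and regularity steps carry over exactly as you describe.
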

\begin{proof}
  Since there is a holomorphic $(n,0)$-form $\Omega$ without zero points on $X$, any $(n,0)$-form $\Omega(t)$ of $X$ smoothly depending  on $t$  with $\Omega(0)=\Omega$ still admits no zero points for small $t$. As reasoned in Lemma \ref{log lemma2}, there holds the isomorphism
$$
\bullet\l\Omega(t): A^{0,1}(X, T_X(-\log D))\to A^{n,1}(X, T_X(-\log D))\subset A^{n-1,1}(X)
$$
with its inverse $$\Omega(t)^*\l:  A^{n,1}(X, T_X(-\log D))\to A^{0,1}(X, T_X(-\log D)).$$
By the same argument as in Proposition \ref{prop3}, one only needs to solve the system of equations
$$
\begin{cases}
(\b{\p}+\frac{1}{2}\p\circ i_{\varphi})(i_{\varphi}\Omega(t))=0,\\
(\b{\p}+\p\circ i_{\varphi})\Omega(t)=0.
\end{cases}
$$
By the same argument as the log Calabi-Yau pair and Theorem \ref{sol-CY}, for any $$[\varphi_1]\in H^{0,1}(X, T_X(-\log D)),$$
we can construct a holomorphic family
$$\varphi(t)\in A^{0,1}(X, T_X(-\log D))$$
on a small $\epsilon$-disk of $0$ in $\mathbb{C}^{\dim_{\mathbb{C}} H^{0,1}(X, T_X(-\log D))}$, satisfying the integrability and initial conditions:
$$
\b{\p}\varphi(t)=\frac{1}{2}[\varphi(t),\varphi(t)], \quad \frac{\p\varphi}{\p t}(0)=\varphi_1.
$$
\end{proof}


\begin{thebibliography}{99}

\bibitem{Ambro} F. Ambro, \textit{An injectivity theorem}, Compositio Math.  150 (2014), 999-1023.

\bibitem{Ancona} V. Ancona, B. Gaveau,
\newblock \textit{Differential Forms on Singular Varieties,  De Rham and Hodge Theory Simplified,}
\newblock Chapman  Hall/CRC, 2005.


\bibitem{BHPV} W. Barth, K. Hulek, C. Peters, A. Van de Ven,
\newblock \textit{Compact complex surfaces},
\newblock Second edition. Ergebnisse der Mathematik und ihrer
Grenzgebiete. 3. Folge. A Series of Modern Surveys in Mathematics
[Results in Mathematics and Related Areas. 3rd Series. A Series of
Modern Surveys in Mathematics], 4. Springer-Verlag, Berlin, 2004.

\bibitem{B} F. Bogomolov,
\newblock \textit{Hamiltonian K\"ahlerian manifolds,}
\newblock  Dokl. Akad. Nauk SSSR, 243, (1978), 1101-1104, Soviet Math. Dokl., 19, (1979), 1462-1465.

\bibitem{cfgu} L. A. Cordero, M. Fernandez, A. Gray, L. Ugarte,
\newblock \textit{A general description of the terms in the Fr\"olicher spectral sequence,}
\newblock  Diff. Geom. Applic. 7 (1997), 75-84.

\bibitem{De} P. Deligne,
\newblock \textit{Th\'{e}or\`{e}me de Lefschetz et crit\`{e}res de d\'{e}g\'{e}n\'{e}rescence de suites spectrales,}
\newblock  Publ. Math. Inst. Hautes \'{E}tudes Sci. {35} (1969), 107-126.


\bibitem{Del70} P. Deligne,
\newblock \textit{Equations differentielles \`{a} points singuliers r\'eguliers,}
\newblock  Springer Lect. Notes Math. {163} (1970).

\bibitem{Del71} P. Deligne, \textit{Th\'eorie de Hodge, II}, Inst. Hautes \'Etudes Sci. Publ. Math., 40 (1971), 5-57.

\bibitem{Del87} P. Deligne, L. Illusie, \textit{Rel\`evements modulo $p^2$ et d\'ecomposition du complexe de de Rham}, Inventiones math. 89 (1987), 247-270.


\bibitem{Dem} J.-P. Demailly,
\newblock \textit{Complex analytic and algebraic geometry},
\newblock book online
\href{http://www-fourier.ujf-grenoble.fr/~demailly/books.html}{http://www-fourier.ujf-grenoble.fr/~demailly/books.html.}



\bibitem{de} G. de Rham, K. Kodaira,
\newblock  \textit{Harmonic integrals,}
\newblock   (Mimeographed notes), Institute for Advanced Study, Princeton (1950).

\bibitem{EV06}H. Esnault, E. Viehweg,
\newblock \textit{Logarithmic de Rham complexes and vanishing theorems},
\newblock  Invent. Math.,  {86} (1986), 161--194.

\bibitem{Viehweg} H. Esnault, E. Viehweg,
\newblock  \textit{Lectures on vanishing theorems,}
\newblock   DMV Seminar, 20, Birkh\"{a}user, Verlag, Basel, (1992).

\bibitem{Fujino} O. Fujino,
\newblock  \textit{Introduction to the log minimal model program for log canonical pairs,}
\newblock   preprint, arXiv:0907.1506 [math.AG]. Published as
\newblock  \textit{Foundations of the minimal model program,}
\newblock   MSJ Memoirs, 35. Mathematical Society of Japan, Tokyo, 2017.

\bibitem{Fujinoinj} O. Fujino,
\newblock  \textit{Injectivity theorems,}
\newblock   to appear in Adv. Stud. Pure Math, arXiv:1303.2404 [math.AG].

\bibitem{Fujinosjv} O. Fujino,
\newblock  \textit{On semipositivity, injectivity, and vanishing theorems,}
\newblock  a survey for Zucker 65,  arXiv:1503.06503v4 [math.AG].

\bibitem{Gilbarg} D. Gilbarg, N. S. Trudinger,
\newblock  \textit{Elliptic Partial Differential Equations of Second Order,}
\newblock   Classics in Mathematics, Springer, Reprint of the 1998 Edition.

\bibitem{Griffith} P. Griffith, J. Harris,
\newblock  \textit{Principles of Algebraic Geometry,}
\newblock   Wiley, New York, 1978.

\bibitem{Groth} A. Grothendieck, \textit{A general theory of fiber spaces with structure sheaf}, Univ. of Kansaa Report, (1958).

\bibitem{Wan} C. Huang, K. Liu, X. Wan, X. Yang, \textit{Logarithmic vanishing theorems on compact K\"ahler manifolds I}, arXiv: 1611. 07671v1, 2016.

\bibitem{Iacono} D. Iacono, \textit{Deformations and obstructions of pairs $(X,D)$}, International Mathematics Research Notices, 2015 (19): 9660-9695.

\bibitem{Kato} K. Kato, S. Usui, \textit{Classifying spaces of degenerating polarized Hodge structures}, Ann. of Math. Stud., 169, Princeton Univ. Press, Princeton, NJ, 2009.

\bibitem{KKP08} L. Katzarkov, M. Kontsevich, T. Pantev, \textit{Hodge theoretic aspects of mirror
symmetry}, In From Hodge theory to integrability and TQFT tt*-geometry,
volume 78 of Proc. Sympos. Pure Math., pages 87-174. Amer. Math. Soc., Providence,
RI, 2008.

\bibitem{kawa} Y. Kawamata, \textit{On deformations of compactifible complex manifolds}, Math. Ann. 235, (1978), 247-265.

\bibitem{kawa1} Y. Kawamata, Y. Namikawa, \textit{Logarithmic deformations of normal crossing varieties and smoothingof degenerate Calabi-Yau varieties}, Invent. math., 118, (1994), 395-409.



\bibitem{Kodaira} K. Kodaira, \textit{The theorem of Riemann-Roch on compact analytic surfaces}, Amer. J. Math. 73, (1951), 1-46.

\bibitem {k} K. Kodaira,
\newblock \textit{Complex manifolds and deformations of complex
structures},
\newblock  Grundlehren der Math. Wiss. 283, Springer (1986).


\bibitem{Liu1} K. Liu, S. Rao, \textit{Remarks on the Cartan formula and its applications}, Asian J. Math. 16 (2012), 157-169.

\bibitem{Liu} K. Liu, S. Rao, X. Yang,
\newblock\textit{Quasi-isometry and deformations of Calabi-Yau manifolds},
\newblock Invent. Math. 199 (2015), no. 2, 423-453.

\bibitem{LSY} K. Liu, X. Sun, S.-T. Yau,
\newblock \textit{Recent development on the geometry of
the Teichm\"{u}ller and moduli spaces of Riemann surfaces},
\newblock Surveys in differential geometry. Vol. XIV.
Geometry of Riemann surfaces and their moduli spaces, 221-259,
(2009).

\bibitem {ma}  S. Matsumura,
\newblock \textit{A transcendental approach to injectivity theorem for log canonical pairs}
\newblock   preprint, arXiv:1607.07213.

\bibitem {MK} J. Morrow, K. Kodaira,
\newblock \textit{Complex manifolds},
\newblock Holt, Rinehart and Winston, Inc.,
New York-Montreal, Que.-London, (1971).

\bibitem{Nog} J. Noguchi, \textit{A short analytic proof of closedness of logarithmic forms}, Kodai Math. J. 18, (1995), No. 2, 295-299.

\bibitem {RwZ} S. Rao, X. Wan, Q. Zhao,
\newblock \textit{Power series proofs for local stabilities of K\"ahler and balanced structures with mild $\partial\bar\partial$-lemma},
\newblock \href{http://arxiv.org/abs/1609.05637v1}{arXiv: 1609.05637v1}.

\bibitem{RZ15}
S. Rao, Q. Zhao,
\emph{Several special complex structures and their deformation properties},
\newblock J. Geom. Anal. (2017), \href{https://doi.org/10.1007/s12220-017-9944-7}{https://doi.org/10.1007/s12220-017-9944-7}, \href{http://arxiv.org/abs/1604.05396v3}{arXiv: 1604.05396v3}.



\bibitem{Saito} K. Saito,
\newblock \textit{On the uniformization of complements of discriminant loci,}
\newblock  A. M. S. Summer Institute, 1975.


\bibitem{Sn} M. Schneider,
\newblock \textit{Halbstetigkeitss\"{a}tze f¨¹r relativ analytische R\"{a}ume,}
\newblock (German)  Invent. Math.  16  (1972), 161-176.

\bibitem{Taylor} M. E. Taylor,
\newblock \textit{Partial Differential Equations III, Nonlinear equations,}
\newblock Springer-Verlag, New York, 1996.

\bibitem{T87} G. Tian,
\newblock \textit{Smoothness of the universal deformation space
of compact Calabi-Yau manifolds and its Petersson-Weil metric},
\newblock Mathematical aspects of string theory (San Diego, Calif., 1986),
629-646, Adv. Ser. Math. Phys., 1, World Sci. Publishing, Singapore,
(1987).

\bibitem {To89} A. Todorov,
\newblock \textit{The Weil-Petersson geometry of the moduli
space of $\mathbb{SU}${$(n\geq3)$} (Calabi-Yau) manifolds I},
\newblock Comm. Math. Phys., {126 (2)}, (1989), 325-346.

\bibitem{Tujie} J. Tu,
\newblock \textit{The Iteration Method in Analytic Deformation Theory,}
\newblock 2016, preprint.


\bibitem{V} C. Voisin,
\newblock \textit{Hodge theory and complex algebraic geometry. I},
\newblock Translated from the French original by Leila Schneps.
Cambridge Studies in Advanced Mathematics, 76. Cambridge University
Press, Cambridge, 2002.

\bibitem {RZ}  Q. Zhao, S. Rao,
\newblock \textit{Applications of deformation formula of holomorphic
one-forms},
\newblock Pacific J. Math. Vol. 266, No. 1, 2013,
221-255.

\bibitem {RZ2}  Q. Zhao, S. Rao,
\newblock \textit{Extension formulas and deformation invariance of Hodge numbers},
\newblock C. R. Math. Acad. Sci. Paris 353
(2015), no. 11, 979-984.

\end{thebibliography}
\end{document}